\begin{document}

\title{A Robust Version of Convex Integral Functionals }

\author{Keita Owari\thanks{The author gratefully acknowledges
    the financial support of the Center of Advanced Research in
    Finance (CARF) at the Graduate School of Economics, The University
    of Tokyo.}}
\def\shorttitle{Robust Integral Functionals}%
\date{\footnotesize First Version: \printdate{26/05/2013}\\ Major Revision: \printdate{25/06/2014}\\ This version: \printdate{21/05/2015}}%

\def\shortauthor{K. Owari}
\address{Graduate School of Economics, The University of Tokyo \newline
  7-3-1 Hongo, Bunkyo-ku, Tokyo 113-0033, Japan}

\email{owari@e.u-tokyo.ac.jp}

\abstract{We study the pointwise supremum of convex integral
  functionals
  \begin{align*}
    \mathcal{I}_{f,\gamma}(\xi)=\sup_{Q} \left( \int_\Omega
    f(\omega,\xi(\omega))Q(d\omega)-\gamma(Q)\right),\quad \xi\in L^\infty(\Omega,\mathcal{F},\mathbb{P}),
  \end{align*}
  where $f:\Omega\times\mathbb{R}\rightarrow\overline{\mathbb{R}}$ is
  a proper normal convex integrand, $\gamma$ is a proper convex
  function on the set of probability measures absolutely continuous
  w.r.t. $\mathbb{P}$, and the supremum is taken over all such
  measures.  We give a pair of upper and lower bounds for the
  conjugate of $\mathcal{I}_{f,\gamma}$ as direct sums of a common
  regular part and respective singular parts; they coincide when
  $\mathrm{dom}(\gamma)=\{\mathbb{P}\}$ as Rockafellar's classical
  result, while both inequalities can generally be strict.  We then
  investigate when the conjugate eliminates the singular measures,
  which a fortiori yields the equality in bounds, and its relation to
  other finer regularity properties of the original functional and of
  the conjugate.  }

\keyWords{convex integral functionals, duality, robust stochastic
  optimization, financial mathematics\\ 
\textbf{MSC (2010):} 46N10, 
  46E30, 
  49N15, 
  52A41, 
  91G80}%
\maketitle

\section{Introduction}
\label{sec:Intro}

Let $(\Omega,\mathcal{F},\mathbb{P})$ be a probability space and
$f:\Omega\times\mathbb{R}\rightarrow (-\infty,\infty]$ a measurable mapping
with $f(\omega,\cdot)$ being proper, convex, lsc for
a.e. $\omega$. Then $\xi\mapsto I_{f}(\xi):= \int_\Omega
f(\omega,\xi(\omega))\mathbb{P}(d\omega)$ defines a convex functional on
$L^\infty:=L^\infty(\Omega,\mathcal{F},\mathbb{P})$, called the \emph{convex integral
  functional}. Among many others, R.T.~Rockafellar obtained in
\citep{MR0310612} that under mild integrability assumptions on $f$,
the conjugate $I_f^*(\nu)=\sup_{\xi\in L^\infty}(\nu(\xi)-I_f(\xi))$
of $I_f$ is expressed as the direct sum of regular and singular parts
(which we call the \emph{Rockafellar theorem}):
\begin{equation}
  \label{eq:RockafellarClassical1}
  I_f^*(\nu)=I_{f^*}(d\nu_r/d\mathbb{P})+\sup_{\xi\in \mathrm{dom}(I_f)}\nu_s(\xi),
  \quad\forall \nu\in (L^\infty)^*,
\end{equation}
where $f^*(\omega,y):=\sup_x(xy-f(\omega,x))$ and $\nu_r$
(resp. $\nu_s$) denotes the regular (resp. singular) part of
$\nu\in(L^\infty)^*$ in the Yosida-Hewitt decomposition.  In
particular, if $I_f$ is finite everywhere on $L^\infty$, the conjugate
$I_f^*$ \emph{``eliminates'' the singular elements} of $(L^\infty)^*$
in that $I_f^*(\nu)=\infty$ unless $\nu$ is $\sigma$-additive. The
latter property implies the weak-compactness of all the sublevel sets
of $I_f^*|_{L^1}$ and the continuity of $I_f$ for the Mackey topology
$\tau(L^\infty,L^1)$ and so on (e.g. \cite[][Th.~3K]{MR0512209}).

This paper is concerned with a \emph{robust version} of integral
functionals of the form
\begin{equation*}
  \mathcal{I}_{f,\gamma}(\xi) 
  :=\sup_{Q\in\mathcal{Q}}\left(\int_\Omega f(\omega,\xi(\omega))Q(d\omega)-\gamma(Q)\right), \quad
  \xi\in L^\infty. 
\end{equation*}
where $\mathcal{Q}$ is the set of all probability measures $Q$ absolutely
continuous w.r.t. $\mathbb{P}$, and $\gamma$ is a convex penalty function on
$\mathcal{Q}$. (see Section~\ref{sec:RobIntBasic} for precise formulation).
As the pointwise supremum of convex functions, $\mathcal{I}_{f,\gamma}$ is
convex, lower semicontinuous if so is each $I_{f,Q}(\xi):=\int_\Omega
f(\omega,\xi(\omega))Q(d\omega)$, and is even norm-continuous as soon
as it is finite everywhere. On the other hand, it is less obvious what
the convex conjugate $\mathcal{I}_{f,\gamma}^*$ is, when singular measures are
eliminated, and whether the latter property yields finer regularity
properties of $\mathcal{I}_{f,\gamma}$ and $\mathcal{I}_{f,\gamma}^*$.

Our main result (Theorem~\ref{thm:RobustRockafellar}) is a robust
version of the Rockafellar theorem which consists of a pair of upper
and lower bounds (instead of a single equality) for $\mathcal{I}_{f,\gamma}^*$
on $(L^\infty)^*$. Both bounds are of forms analogous to
(\ref{eq:RockafellarClassical1}) with a common regular part, but a
difference appears in the singular parts. They coincide in the
classical case $I_f=\mathcal{I}_{f,\delta_{\{\mathbb{P}\}}}$, while the subsequent
Example~\ref{ex:CounterEx1} shows that both inequalities can be strict
and one may not hope for sharper bounds in general.  The same example
shows also that the everywhere finiteness of $\mathcal{I}_{f,\gamma}$ is not
enough for the property that $\mathcal{I}^*_{f,\gamma}$ \emph{eliminates the
  singular measures}, while the lower bound in
Theorem~\ref{thm:RobustRockafellar} provides us a simple sufficient
condition in a form of ``uniform integrability''.  In
Theorem~\ref{thm:LebMaxCompact} and its corollaries, it is shown that
given $\mathrm{dom}(\mathcal{I}_{f,\gamma})=L^\infty$ (plus a technical assumption),
the condition is even necessary, and is equivalent to other finer
properties of $\mathcal{I}_{f,\gamma}$ and $\mathcal{I}_{f,\gamma}^*$, including the
weak compactness of sublevels of $\mathcal{I}_{f,\mathcal{P}}^*|_{L^1}$, the Mackey
continuity of $\mathcal{I}_{f,\gamma}$ on $L^\infty$ among others, which are
guaranteed solely by the finiteness in the classical case.

Certain class of \emph{robust optimization} problems are formulated as
(or reduced to) the minimization of a robust integral functional
$\mathcal{I}_{f,\gamma}$ over a convex set $\mathcal{C}\subset L^\infty$. Our initial
motivation of this work lies in the Fenchel duality for this type of
problems with $\langle L^\infty,L^1\rangle$ pairing:
\begin{equation}
  \label{eq:FenchelIntro}
  \inf_{\xi\in \mathcal{C}}\mathcal{I}_{f,\gamma}(\xi)=-\min_{\eta\in L^1}\Bigl(\mathcal{I}_{f,\gamma}^*(\eta)
    +\sup_{\xi\in \mathcal{C}}\langle -\xi,\eta\rangle\Bigr),
\end{equation}
The Rockafellar-type result provides us the precise form of
$\mathcal{I}_{f,\gamma}^*|_{L^1}$, hence of the \emph{dual problem}, while the
classical Fenchel duality theorem tells us that a sufficient condition
for (\ref{eq:FenchelIntro}) is the $\tau(L^\infty,L^1)$-continuity of
$\mathcal{I}_{f,\gamma}$ at some point $\xi_0\in \mathcal{C}\cap
\mathrm{dom}(\mathcal{I}_{f,\gamma})$, or in another view, the finiteness implies (via
the norm continuity) the $\langle
L^\infty,(L^\infty)^*\rangle$-duality which reduces to the $\langle
L^\infty,L^1\rangle$-duality if $\mathcal{I}_{f,\gamma}^*$ eliminates the
singular elements of $(L^\infty)^*$.

Our motivating example of minimization of $\mathcal{I}_{f,\gamma}$ is the
\emph{robust utility maximization}
\begin{align}
  \label{eq:RobUtilIntro1}
  \text{maximize}\quad u(\xi):=\inf_{Q\in\mathcal{Q}}
  \left(\mathbb{E}_Q[U(\cdot,\xi)] +\gamma(Q)\right) \quad\text{over a convex
    cone } \mathcal{C}\subset L^\infty
\end{align}
where $U:\Omega\times\mathbb{R}\rightarrow\mathbb{R}$ is a \emph{random utility
  function}. In this context, each $Q\in\mathcal{Q}$ is considered as a model
used to evaluate the quality of control $\xi$ via the expected utility
$\mathbb{E}_Q[U(\cdot, \xi)]=\int_\Omega U(\omega,\xi(\omega))Q(d\omega)$,
but we are not sure which model is true; so we optimize the worst case
among all models $Q$ penalized by $\gamma(Q)$ according to the
likelihood.  Note that (\ref{eq:RobUtilIntro1}) is equivalent to
minimize $\mathcal{I}_{f_U,\gamma}$ with $f_U(\omega,x)=-U(\omega,-x)$ over
the cone $-\mathcal{C}$. The corresponding Fenchel duality in $\langle
L^\infty,L^1\rangle$ constitute a half of what we call the
\emph{martingale duality} in mathematical finance; given the $\langle
L^\infty,L^1\rangle$-duality with a ``good'' cone $\mathcal{C}$ having the
polar generated by so-called \emph{local martingale measures}, the
theory of (semi)martingales takes care of the other half.

\section{Preliminaries}
\label{sec:RobIntBasic}

We use the probabilistic notation.  Let
$(\Omega,\mathcal{F},\mathbb{P})$ be a \emph{complete} probability
space and $L^0:=L^0(\Omega,\mathcal{F},\mathbb{P})$ denote the space
of (equivalence classes modulo $\mathbb{P}$-a.s. equality of)
$\mathbb{R}$-valued random variables defined on it. As usual, we do
not distinguish a random variable and the class it generates, and a
constant $c\in\mathbb{R}$ is regarded as the random variable
$c\ind_\Omega$. Here $\ind_A$ denotes the indicator of a set $A$ in
measure theoretic sense while $\delta_A :=\infty\ind_{A^c}$ is the one
in convex analysis. The $\mathbb{P}$-expectation of $\xi\in L^0$ is
denoted by $\mathbb{E}[\xi]:=\int_\Omega \xi(\omega)
\mathbb{P}(d\omega)$ and we write $L^p
:=L^p(\Omega,\mathcal{F},\mathbb{P})$ and
$\|\cdot\|_p:=\|\cdot\|_{L^p}$ for each $1\leq p\leq \infty$.  Any
probabilistic notation without reference to a measure is to be
understood with respect to $\mathbb{P}$. Especially, ``a.s.''  means
``$\mathbb{P}$-a.s.'', and \emph{identification of random variables is
  always made by $\mathbb{P}$}.  For other probability measures $Q$
absolutely continuous with respect to $\mathbb{P}$ ($Q\ll
\mathbb{P}$), we write $\mathbb{E}_Q[\cdot]$ for $Q$-expectation,
$L^p(Q):=\{\xi\in L^0:\, \mathbb{E}_Q[|\xi|^p]<\infty\}$ (which is a
set of $\mathbb{P}$-classes!) etc, explicitly indicating the measure
that involves.  We write $Q\sim P$ to mean $Q$ and $P$ are equivalent
($Q\ll P$ and $P\ll Q$).

The norm dual of $L^\infty$ is $ba:=ba(\Omega,\mathcal{F},\mathbb{P})$, the space of
all \emph{bounded finitely additive signed measures} $\nu$ respecting
$\mathbb{P}$-null sets, i.e., $\sup_{A\in\mathcal{F}}|\nu(A)|<\infty$, $\nu(A\cup
B)=\nu(A)+\nu(B)$ if $A\cap B=\emptyset$, and $\nu(A)=0$ if $\mathbb{P}(A)=0$
(see \cite[pp.~354-357]{hewitt_stromberg75:_real}). The bilinear form
of $(L^\infty,ba)$ is given by the (Radon) integral
$\nu(\xi)=\int_\Omega\xi d\nu$ which coincides with the usual integral
when $\nu$ is $\sigma$-additive. We regard any
\emph{$\sigma$-additive} $\nu\in ba$ as an element of $L^1$ via the
mapping $\nu\mapsto d\nu/d\mathbb{P}$ which is an isometry from the subspace
of such $\nu$'s onto $L^1$. In particular, the set 
\begin{align*}
  \mathcal{Q}:=\{Q:\, \text{probability measures on $(\Omega,\mathcal{F})$ with
    $Q\ll\mathbb{P}$}\}
\end{align*}
is regarded as $\{\eta\in L^1:\, \eta\geq 0,\,\mathbb{E}[\eta]=1\}$.  A $\nu\in
ba$ is said to be \emph{purely finitely additive} if there exists a
sequence $(A_n)$ in $\mathcal{F}$ such that $\mathbb{P}(A_n)\nearrow 1$ but
$|\nu|(A_n)=0$ for all $n$, and we denote by $ba^s$ the totality of
such $\nu\in ba$. Any $\nu \in ba$ admits a unique \emph{Yosida-Hewitt
  decomposition} $\nu=\nu_r+\nu_s$ where $\nu_r$ is the \emph{regular}
($\sigma$-additive) part, and $\nu_s$ is the \emph{purely finitely
  additive part}
(e.g. \citep[Th.~III.7.8]{dunford58:_linear_operat_I}), thus
$ba=L^1\oplus ba^s$ with the above identification.  We denote by
$ba_+$ the set of \emph{positive} elements of $ba$ and $ba_+^s:=
ba_+\cap ba^s$ etc.

\subsection{Convex Penalty Function and Associated Orlicz Spaces}
\label{sec:ConvPenal}

We make the following assumption on the \emph{penalty function}
$\gamma$:
\begin{assumption}
  \label{as:PenaltyFunc}
  $\gamma:\mathcal{Q}\rightarrow \overline{\mathbb{R}}_+$ is a
  $\sigma(L^1,L^\infty)$-lsc proper convex function such that
\begin{align}
  \label{eq:PenaltyFuncAssump1}
  & \inf_{Q\in\mathcal{Q}}\gamma(Q)=0;\\
  \label{eq:PenaltyFuncSensitive}
  &\exists Q_0\in \mathcal{Q} \text{ with } Q_0\sim\mathbb{P}\text{
    and }  \gamma(Q_0)<\infty;\\
  \label{eq:PenaltyFuncCompact}
  &\{Q\in\mathcal{Q}:\, \gamma(Q)\leq 1\} \text{ is
    $\sigma(L^1,L^\infty)$-compact}\\\notag
&\qquad\qquad (\Leftrightarrow\,\text{closed
    and uniformly integrable}).
\end{align}
We denote $\mathcal{Q}_\gamma:=\{Q\in\mathcal{Q}:\, \gamma(Q)<\infty\}$, the effective
domain of $\gamma$.
\end{assumption}

\begin{remark}
  \label{rem:PenaltyFuncAssump}
  (\ref{eq:PenaltyFuncAssump1}) and (\ref{eq:PenaltyFuncSensitive})
  are normalizing assumptions and the latter one is equivalent to
  saying that $\mathcal{Q}_\gamma\sim \mathbb{P}$, i.e., for each
  $A\in\mathcal{F}$, $\mathbb{P}(A)=0$ iff $Q(A)=0$ for every
  $Q\in\mathcal{Q}_{\gamma}$.  Given the lower semicontinuity of
  $\gamma$, (\ref{eq:PenaltyFuncCompact}) implies that the infimum in
  (\ref{eq:PenaltyFuncAssump1}) is attained, and
  (\ref{eq:PenaltyFuncCompact}) is equivalent to apparently stronger
  \begin{equation*}
    \{Q\in\mathcal{Q}:\,\gamma(Q)\leq c\}\text{ is $\sigma(L^1,L^\infty)$-compact, }\forall c>0.
  \end{equation*}
  Indeed, for any $c>1$, pick a $Q_c\in\mathcal{Q}_\gamma$ with
  $\gamma(Q_c)<1/c$ (by (\ref{eq:PenaltyFuncAssump1})), then
  $\gamma(Q)\leq c$ implies
  $\gamma\left(\frac1{1+c}Q+\frac{c}{1+c}Q_c\right)\leq
  \frac1{1+c}\cdot c+\frac{c}{1+c}\cdot \frac1c=1$, thus
  (\ref{eq:PenaltyFuncCompact}) implies the weak compactness of
  $\left\{\frac1{1+c}Q+\frac{c}{1+c}Q_c:\,\gamma(Q)\leq c\right\}$,
  hence of $\{Q\in \mathcal{Q}_\gamma:\, \gamma(Q)\leq c\}$.
\end{remark}

In general, any lower semicontinuous proper convex function $\gamma$
on $\mathcal{Q}$ defines a function
\begin{align*}
  \rho_\gamma(\xi)
  :=\sup_{Q\in\mathcal{Q}_\gamma}\left(\mathbb{E}_Q[\xi]-\gamma(Q)\right)\quad
  \text{on }\left\{\xi\in L^0:\, \xi^-\in
    \textstyle\bigcap_{Q\in\mathcal{Q}_\gamma} L^1(Q)\right\}\supset
  L^\infty\cup L^0_+.
\end{align*}
Regardless of Assumption~\ref{as:PenaltyFunc}, $\rho_\gamma$
restricted to $L^\infty$ is a $\sigma(L^\infty,L^1)$-lsc finite-valued
monotone convex function with $\rho_\gamma(\xi+c)=\rho_\gamma(\xi)+c$
if $c$ is a constant, whose conjugate on $L^1$ is
$\gamma(\eta)\ind_{\mathcal{Q}}(\eta)+\infty\ind_{L^1\setminus
  \mathcal{Q}}(\eta)$. In financial mathematics, such a function is
called a convex risk measure (up to a change of sign). Then
(\ref{eq:PenaltyFuncAssump1}) reads as $\rho_\gamma(0)=0$,
(\ref{eq:PenaltyFuncSensitive}) as $\rho_\gamma(\varepsilon\ind_A)=0$
for some $\varepsilon>0$ $\Rightarrow$ $\mathbb{P}(A)=0$, while
(\ref{eq:PenaltyFuncCompact}) is equivalent to saying that
$\rho_\gamma$ has the \emph{Lebesgue property on $L^\infty$} (see
\citep{jouini06:_law_fatou,MR2648597,MR2924150}):
\begin{align*}
  \sup_n\|\xi_n\|_\infty<\infty,\,\xi_n\rightarrow\xi\text{ a.s. }
  \Rightarrow\, \rho_\gamma(\xi)=\lim_n\rho_\gamma(\xi_n).
\end{align*}

To a penalty function $\gamma$ satisfying
Assumption~\ref{as:PenaltyFunc}, we associate the gauge norm
\begin{equation*}
  \|\xi\|_{\rho_\gamma} =\inf\left\{\lambda>0:\,
    \rho_\gamma(|\xi|/\lambda)\leq 1\right\}=\sup_{Q\in\mathcal{Q}_\gamma}
  \frac{\mathbb{E}_Q[|\xi|]}{1+\gamma(Q)}, \quad \xi\in L^0.
\end{equation*}
In view of (\ref{eq:PenaltyFuncSensitive}), this is indeed a norm on
the \emph{Orlicz space}
\begin{align*}
  L^{\rho_\gamma} &=\left\{\xi\in L^0:\, \exists \alpha>0\text{ with }
    \rho_\gamma(\alpha|\xi|)<\infty\right\} =\{\xi\in L^0:\,
  \|\xi\|_{\rho_\gamma}<\infty\},
\end{align*}
which is a \emph{solid} subspace (lattice ideal) of $L^0$ (i.e.,
$\xi\in L^{\rho_\gamma}$ and $|\zeta|\leq |\xi|$ a.s. $\Rightarrow$
$\zeta\in L^{\rho_\gamma}$), and
$(L^{\rho_\gamma},\|\cdot\|_{\rho_\gamma})$ is a Banach lattice. We
consider the following subspaces of $L^{\rho_\gamma}$ too:
\begin{align*}
  M^{\rho_\gamma}&:=\left\{\xi\in L^0:\, \forall \alpha>0,\, \rho_\gamma(\alpha|\xi|)<\infty\right\},\\
  M^{\rho_\gamma}_u&:=\left\{\xi\in L^0:\, \forall
    \alpha>0,\,\lim_N\rho_\gamma\left(\alpha|\xi|\ind_{\{|\xi|>N\}}\right)=0\right\}.
\end{align*}
Both $M^{\rho_\gamma}$ and $M^{\rho_\gamma}_u$ are solid as subspaces
of $L^0$. These Orlicz-type spaces are studied in \citep{MR3165235}
where $\gamma$ corresponds to $\varphi_0^*$ (more precisely
$\varphi^*_0=\gamma\ind_\mathcal{Q}+\infty\ind_{L^1\setminus \mathcal{Q}}$) and
$\hat\varphi=\rho_\gamma$ in the notation of the current paper. In
general under Assumption~\ref{as:PenaltyFunc},
\begin{equation*}
  L^\infty\subset  M^{\rho_\gamma}_u 
  \subset M^{\rho_\gamma} 
  \subset L^{\rho_\gamma}\subset\textstyle\bigcap_{Q\in\mathcal{Q}_\gamma}L^1(Q),
\end{equation*}
while all inclusions can generally be strict (\citep[][Examples~3.6
and 3.7]{MR3165235}). From the last inclusion, $\rho_\gamma$ is
well-defined on $L^{\rho_\gamma}$ as a proper monotone convex
function, and it is finite-valued on $M^{\rho_\gamma}$ while not on
$L^{\rho_\gamma}$ (unless $M^{\rho_\gamma}=L^{\rho_\gamma}$), and
$M^{\rho_\gamma}_u$ is the maximum solid subspace of $L^0$ to which
$\rho_\gamma$ retain the Lebesgue property (as the order-continuity;
see \citep[Theorem~3.5]{MR3165235}), and is characterized by a uniform
integrability property \citep[Theorem~3.8]{MR3165235}: for $\xi\in
M^{\rho_\gamma}$,
\begin{equation}
  \label{eq:UIOrliczIFF}
  \xi\in M^{\rho_\gamma}_u\,
  \Leftrightarrow\, \{\xi dQ/d\mathbb{P}:\, \gamma(Q)\leq c\} \text{ is uniformly integrable, }\forall c>0.
\end{equation}
Actually, by the same argument as in
Remark~\ref{rem:PenaltyFuncAssump}, we have only to consider the case
$c=1$.

\subsection{Robust Version of Integral Functionals}

In the sequel, let $f:\Omega\times\mathbb{R}\rightarrow\overline{\mathbb{R}}$ be a
\emph{proper normal convex integrand} on $\mathbb{R}$, i.e.,
\begin{equation}
  \label{eq:NormalConvInt}
  \begin{split}
    &f\text{ is $\mathcal{F}\otimes\mathcal{B}(\mathbb{R})$-measurable
      and
    }\\
    &\text{$f(\omega,\cdot)$ is an lsc proper convex function for
      a.e. }\omega.
  \end{split}
\end{equation}
Since $\mathcal{F}$ is assumed $\mathbb{P}$-complete, the first part is equivalent to
the measurability of epigraphical mapping (see
\citep[Ch.~14]{rockafellar_wets98} for a general reference). As
immediate consequences of (\ref{eq:NormalConvInt}), $f(\cdot,\xi)$ is
$\mathcal{F}$-measurable for each $\xi\in L^0$, and $f^*$ is also a proper
normal convex integrand where
\begin{equation*}
  \label{eq:OutlineConjNormal1}
  f^*(\omega,y):=\sup_{x\in\mathbb{R}}(xy-f(\omega,x)),\quad \omega\in \Omega,\, y\in \mathbb{R},
\end{equation*}

Given such $f$ and $\gamma$, we define a robust analogue of convex
integral functional
\begin{equation}
  \label{eq:SupIntFunc1}
  \mathcal{I}_{f,\gamma}(\xi):=\sup_{Q\in\mathcal{Q}_\gamma} 
  \left(\mathbb{E}_Q[f(\cdot,\xi)]-\gamma(Q)\right) =\rho_\gamma\left(f(\cdot,\xi)\right),
  \quad \xi\in L^\infty.
\end{equation}
This functional is well-defined as a proper convex functional on
$L^\infty$ as soon as:
\begin{align}
  \label{eq:IntegRobust1}
  &\exists \xi_0\in L^\infty\text{ s.t. } f(\cdot,\xi_0)^+\in M^{\rho_\gamma},\\
  \label{eq:AsConjInteg1}
  &\exists \eta_0\in L^{\rho_\gamma} \text{ s.t. }
  f^*(\cdot,\eta_0)^+\in L^{\rho_\gamma}.
\end{align}
Indeed, since $f(\cdot,\xi)\geq \xi\eta_0-f^*(\cdot,\eta_0)^+$,
(\ref{eq:AsConjInteg1}) implies $f(\cdot,\xi)^-\in L^{\rho_\gamma}
\subset \bigcap_{Q\in\mathcal{Q}_\gamma} L^1(Q)$, so
$\mathbb{E}_Q[f(\cdot,\xi)]$ is well-defined with values in
$(-\infty,\infty]$ for each $Q\in\mathcal{Q}_\gamma$ and $\xi\in
L^\infty$, thus so is
$\mathcal{I}_{f,\gamma}(\xi)=\sup_{Q\in\mathcal{Q}_\gamma}\left(\mathbb{E}_Q[f(\cdot,
  \xi)]-\gamma(Q)\right)$, while (\ref{eq:IntegRobust1}) shows that
$\mathcal{I}_{f,\gamma}(\xi_0)<\infty$, thus
$\mathcal{I}_{f,\gamma}\not\equiv\infty$. Also,
$\mathcal{I}_{f,\gamma}$ is convex as the pointwise supremum of convex
functions. Note also that
\begin{align}\label{eq:IFDom}
  \{\xi\in L^\infty:\, f(\cdot, \xi)^+\in M^{\rho_\gamma}\}\subset
  \mathrm{dom}(\mathcal{I}_{f,\gamma}) \subset \{\xi\in L^\infty:\, f(\cdot,\xi)^+\in
  L^{\rho_\gamma}\}.
\end{align}
where $\mathrm{dom}( \mathcal{I}_{f,\gamma}) :=\{\xi \in L^\infty:\,
\mathcal{I}_{f,\gamma}(\xi)<\infty\}$. In particular, by
(\ref{eq:PenaltyFuncSensitive}),
\begin{equation}
  \label{eq:DomIFImplyDomF}
  \xi\in \mathrm{dom}(\mathcal{I}_{f,\gamma})\,\Rightarrow\, \xi(\omega)\in \mathrm{dom} f(\omega,\cdot)\text{ for a.e. }\omega.
\end{equation}
If $M^{\rho_\gamma}=L^{\rho_\gamma}$, all three sets in
(\ref{eq:IFDom}) coincide, so $\rho_\gamma\left(f(\cdot,
  \xi)^+\right)<\infty$ as soon as $\mathcal{I}_{f,\gamma}(\xi)<\infty$. In
general, however, $\rho_\gamma\left(f(\cdot,\xi)^+\right)=\infty$ may
happen even if $\xi\in \mathrm{dom}(\mathcal{I}_{f,\gamma})$.

We next check that $\mathcal{I}_{f,\gamma}$ has a nice regularity on
$L^\infty$.
\begin{lemma}
  \label{lem:WellDefFatou}
  Under Assumption~\ref{as:PenaltyFunc}, (\ref{eq:IntegRobust1}) and
  (\ref{eq:AsConjInteg1}), $\mathcal{I}_{f,\gamma}$ is
  $\sigma(L^\infty,L^1)$-lower semicontinuous, or equivalently
  $\mathcal{I}_{f,\gamma}$ has the Fatou property:
  \begin{equation}
    \label{eq:Fatou1}
    \sup_n\|\xi_n\|_\infty<\infty,\,\xi_n\rightarrow \xi\text{ a.s. } 
    \Rightarrow\, \mathcal{I}_{f,\gamma}(\xi)\leq \liminf_n\mathcal{I}_f(\xi_n).
  \end{equation}

\end{lemma}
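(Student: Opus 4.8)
The plan is to reduce the Fatou property of $\IC_{f,\gamma}$ to the Fatou property of each $I_{f,Q}$ together with the Lebesgue property of $\rho_\gamma$ on $L^\infty$, which is exactly \eqref{eq:PenaltyFuncCompact}. First, since $\IC_{f,\gamma}$ is convex and (by the bound $f(\cdot,\xi)^-\in L^{\rho_\gamma}$ coming from \eqref{eq:AsConjInteg1}) proper with values in $(-\infty,\infty]$ on $L^\infty$, the equivalence between $\sigma(L^\infty,L^1)$-lower semicontinuity and the Fatou property \eqref{eq:Fatou1} is a standard consequence of the Krein--\v{S}mulian theorem (a convex set in $L^\infty$ is $\sigma(L^\infty,L^1)$-closed iff its intersection with every ball is closed for bounded a.s.\ convergence); so it suffices to establish \eqref{eq:Fatou1}.

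To prove \eqref{eq:Fatou1}, fix $Q\in\QC_\gamma$ and a norm-bounded sequence $\xi_n\to\xi$ a.s. The key auxiliary claim is that $I_{f,Q}(\xi)\le\liminf_n I_{f,Q}(\xi_n)$; granting it, we get for every $Q\in\QC_\gamma$
\begin{equation*}
  \EB_Q[f(\cdot,\xi)]-\gamma(Q)\le\liminf_n\bigl(\EB_Q[f(\cdot,\xi_n)]-\gamma(Q)\bigr)\le\liminf_n\IC_{f,\gamma}(\xi_n),
\end{equation*}
and taking the supremum over $Q\in\QC_\gamma$ on the left yields \eqref{eq:Fatou1}. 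For the auxiliary claim itself, I would argue by the usual Fatou-lemma route after a uniform lower truncation: by \eqref{eq:AsConjInteg1} there is $\eta_0\in L^{\rho_\gamma}$ with $g:=f^*(\cdot,\eta_0)^+\in L^{\rho_\gamma}$, and then $f(\omega,\xi_n(\omega))\ge \xi_n(\omega)\eta_0(\omega)-g(\omega)\ge -\bigl(\sup_m\|\xi_m\|_\infty\bigr)|\eta_0(\omega)|-g(\omega)=:-h(\omega)$ with $h\in L^{\rho_\gamma}\subset L^1(Q)$ for each $Q\in\QC_\gamma$. Since $f(\omega,\cdot)$ is lsc for a.e.\ $\omega$, $f(\omega,\xi(\omega))\le\liminf_n f(\omega,\xi_n(\omega))$ pointwise a.s., and applying Fatou's lemma to the nonnegative integrands $f(\cdot,\xi_n)+h$ under $Q$ gives $I_{f,Q}(\xi)\le\liminf_n I_{f,Q}(\xi_n)$.

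Two points deserve care, and the first is the main obstacle. (i) The uniform lower bound $h$ must be $Q$-integrable \emph{simultaneously for the purpose of applying Fatou}, and while $h\in L^{\rho_\gamma}\subset\bigcap_{Q\in\QC_\gamma}L^1(Q)$ settles integrability for each fixed $Q$, one must note that no uniform control over $Q$ is needed here because the supremum over $Q$ is taken only \emph{after} passing to the liminf; this ordering is what makes the argument work without invoking \eqref{eq:PenaltyFuncCompact} at all in this lemma. (ii) The measurability of $\omega\mapsto f(\omega,\xi_n(\omega))$ and $\omega\mapsto f(\omega,\xi(\omega))$, needed to speak of $I_{f,Q}$, follows from \eqref{eq:NormalConvInt} as already recorded in the text. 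I expect the only genuinely delicate step to be handling the possibility that $I_{f,Q}(\xi_n)=+\infty$ for some $n$ (which is harmless, as it only makes the liminf larger) versus ensuring the subtracted bound $h$ never creates a spurious $\infty-\infty$; writing everything in terms of the nonnegative quantities $f(\cdot,\xi_n)+h\ge 0$ avoids this cleanly.
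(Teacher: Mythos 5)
Your proposal is correct and follows essentially the same route as the paper: the uniform lower bound $f(\cdot,\xi_n)\geq -a|\eta_0|-f^*(\cdot,\eta_0)^+\in L^{\rho_\gamma}\subset\bigcap_{Q\in\QC_\gamma}L^1(Q)$ from (\ref{eq:AsConjInteg1}), Fatou's lemma under each fixed $Q$, the interchange $\sup_Q\liminf_n\leq\liminf_n\sup_Q$, and Krein--\v{S}mulian for the equivalence with weak* lower semicontinuity. Your observation in point (i) that (\ref{eq:PenaltyFuncCompact}) is not actually needed is accurate and consistent with the paper (the opening sentence's appeal to the Lebesgue property of $\rho_\gamma$ is superfluous, as you yourself note).
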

\begin{proof}
  Suppose $a:=\sup_n\|\xi_n\|_\infty<\infty$ and $\xi_n\rightarrow
  \xi$ a.s., then automatically $\xi\in L^\infty$. With $\eta_0$ as in
  (\ref{eq:AsConjInteg1}), $f(\cdot, \xi_n)\geq
  -a|\eta_0|-f^*(\cdot,\eta_0)^+\in
  L^{\rho_\gamma}\subset\bigcap_{Q\in\mathcal{Q}_\gamma}L^1(Q)$. Hence for each
  $Q\in\mathcal{Q}_\gamma$, Fatou's lemma shows that $\mathbb{E}_Q[f(\cdot,\xi)]\leq
  \liminf_n\mathbb{E}_Q[f(\cdot,\xi_n)]$, and we deduce (\ref{eq:Fatou1}) as
  \begin{align*}
    \sup_{Q\in\mathcal{Q}_\gamma}\left(\mathbb{E}_Q[f(\cdot,\xi)]-\gamma(Q)\right)
    &\leq \sup_{Q\in\mathcal{Q}_\gamma}\left(
      \liminf_n\mathbb{E}_Q[f(\cdot,\xi_n)]-\gamma(Q)\right) \\
    &\leq
    \liminf_n\sup_{Q\in\mathcal{Q}_\gamma}\left(\mathbb{E}_Q[f(\cdot,\xi_n)]-\gamma(Q)\right).
  \end{align*}
  The equivalence between (\ref{eq:Fatou1}) and the weak*-lower
  semicontinuity follows from a well-known consequence of the
  Krein-Šmulian and Mackey-Arens theorems that (see
  e.g. \citep{MR0372565}) a \emph{convex} set $C\subset L^\infty$ is
  weak*-closed if and only if $C\cap \{\xi:\,\|\xi\|_\infty\leq a\}$
  is $L^0$-closed for all $a>0$.
\end{proof}

\subsection{Robust $f^*$-divergence}
\label{sec:Divergence}

We proceed to the functional that plays the role of $I_{f^*}$ in the
classical case. Let
\begin{equation*}
  \tilde f^*(\omega,y,z):=\sup_{x\in \mathrm{dom}f(\omega,\cdot)}(xy-zf(\omega,x)),
  \quad \omega\in\Omega,\, (y,z)\in \mathbb{R}\times\mathbb{R}_+.
\end{equation*}
Noting that $(a^-_{f^*},a^+_{f^*})\subset \mathrm{dom}f\subset
[a^-_{f^*},a^+_{f^*}]$ where $a^{\pm}_{f^*}
:=\lim_{k\rightarrow\pm}f^*(\cdot,k)/k$, we have
\begin{align}\label{eq:ConjTilde3}
  \tilde f^*(\omega,y,z)=
  \begin{cases}
    0&\text{ if }y=z=0,\\
    y\cdot a^\pm_{f^*}(\omega)
    &\text{ if }y\gtrless0,z=0,\\
    zf^*(\omega, y/z)&\text{ if }z>0.
  \end{cases}
\end{align}
\begin{lemma}
  \label{lem:ftildeNormal1}
  Suppose (\ref{eq:NormalConvInt}). Then $\tilde
  f^*:\Omega\times\mathbb{R}\times\mathbb{R}_+\rightarrow \overline{\mathbb{R}}$ is a proper
  normal convex integrand on $\mathbb{R}\times\mathbb{R}_+$, i.e., it is $\mathcal{F}\otimes
  \mathcal{B}(\mathbb{R})\otimes\mathcal{B}(\mathbb{R}_+)$-measurable and $(y,z)\mapsto \tilde
  f^*(\omega,y,z)$ is a lower semicontinuous proper convex function
  for a.e. $\omega\in\Omega$. Also, for a.e. $\omega\in \Omega$,
\begin{equation}
  \label{eq:IneqYoungFtilde}
  xy\leq zf(\omega, x)+\tilde f^*(\omega,y,z),
  \quad\,\forall x\in\mathrm{dom}f(\omega,\cdot),\, \forall y \in \mathbb{R},\, \forall z\geq 0. 
\end{equation}

\end{lemma}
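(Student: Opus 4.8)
The statement asks for three things: joint $\FC\otimes\BC(\RB)\otimes\BC(\RB_{+})$-measurability of $\tilde f^{*}$, the fact that $(y,z)\mapsto\tilde f^{*}(\omega,y,z)$ is an lsc proper convex function on $\RB\times\RB_{+}$ for a.e.\ $\omega$, and the Young-type inequality \eqref{eq:IneqYoungFtilde}. The plan is to observe that $\tilde f^{*}(\omega,\cdot,\cdot)$ is nothing but the support function of the epigraph of $f(\omega,\cdot)$, which trivializes all three. Precisely, I would first check that for every $\omega$ at which $f(\omega,\cdot)$ is proper lsc convex (hence for a.e.\ $\omega$) and all $(y,z)\in\RB\times\RB_{+}$,
\begin{equation*}
  \tilde f^{*}(\omega,y,z)=\sigma_{\mathrm{epi}\,f(\omega,\cdot)}(y,-z):=\sup\bigl\{xy-tz:\,(x,t)\in\mathrm{epi}\,f(\omega,\cdot)\bigr\}.
\end{equation*}
Here ``$\le$'' holds because $(x,f(\omega,x))\in\mathrm{epi}\,f(\omega,\cdot)$ whenever $x\in\dom f(\omega,\cdot)$, while ``$\ge$'' holds because $(x,t)\in\mathrm{epi}\,f(\omega,\cdot)$ means $x\in\dom f(\omega,\cdot)$ and $t\ge f(\omega,x)$, so $z\ge0$ gives $xy-tz\le xy-zf(\omega,x)\le\tilde f^{*}(\omega,y,z)$. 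The ``$\le$'' direction is exactly \eqref{eq:IneqYoungFtilde}. Moreover, as a pointwise supremum of the continuous affine functions $(y,z)\mapsto xy-tz$, the function $\tilde f^{*}(\omega,\cdot,\cdot)$ is convex and lsc on $\RB\times\RB_{+}$, is nowhere $-\infty$ (the support function of a nonempty set is $>-\infty$ everywhere), and satisfies $\tilde f^{*}(\omega,0,0)=0$ since $\mathrm{epi}\,f(\omega,\cdot)\neq\emptyset$; hence it is proper.

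For the measurability I would invoke the standard theory of normal integrands. Because $f$ is a normal convex integrand and $\FC$ is $\PB$-complete, the epigraphical multifunction $\omega\mapsto\mathrm{epi}\,f(\omega,\cdot)\subset\RB^{2}$ is measurable with closed convex (a.e.\ nonempty) values, as recalled in Section~\ref{sec:RobIntBasic}; modifying it on a $\PB$-null set if necessary so that it is nonempty-valued everywhere, it admits a Castaing representation, i.e.\ there are measurable maps $c_{k}=(c_{k}^{1},c_{k}^{2})\colon\Omega\to\RB^{2}$, $k\ge1$, such that $\{c_{k}(\omega):k\ge1\}$ is dense in $\mathrm{epi}\,f(\omega,\cdot)$ for a.e.\ $\omega$. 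Since a support function does not change when the set is replaced by a dense subset, the identity of the previous paragraph yields, for a.e.\ $\omega$ and all $(y,z)\in\RB\times\RB_{+}$,
\begin{equation*}
  \tilde f^{*}(\omega,y,z)=\sup_{k\ge1}\bigl(c_{k}^{1}(\omega)\,y-c_{k}^{2}(\omega)\,z\bigr).
\end{equation*}
Each map $(\omega,y,z)\mapsto c_{k}^{1}(\omega)\,y-c_{k}^{2}(\omega)\,z$ is $\FC\otimes\BC(\RB)\otimes\BC(\RB_{+})$-measurable, being measurable in $\omega$ and affine (hence continuous) in $(y,z)$, so their countable supremum $\tilde f^{*}$ is measurable as well; altering $\tilde f^{*}$ on the exceptional $\PB$-null set of $\omega$'s is harmless since the assertion is understood modulo null sets. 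This gives that $\tilde f^{*}$ is a proper normal convex integrand on $\RB\times\RB_{+}$.

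The only real obstacle is this joint measurability; convexity, lower semicontinuity, properness and \eqref{eq:IneqYoungFtilde} are immediate consequences of the support-function representation. Routing through $\mathrm{epi}\,f(\omega,\cdot)$ also avoids the case analysis one would face working directly from the closed form \eqref{eq:ConjTilde3}: there the slice $z=0$ is special, $\tilde f^{*}(\omega,y,0)=\sigma_{\dom f(\omega,\cdot)}(y)$ being $y\,a^{\pm}_{f^{*}}(\omega)$ with the value $+\infty$ possibly occurring, and one must separately treat the $\omega$'s for which $\dom f(\omega,\cdot)$ is a singleton. That alternative can be made to work too---using that $a^{\pm}_{f^{*}}$ are measurable as countable limits of $\omega\mapsto f^{*}(\omega,k)/k$ and gluing the $z=0$ and $z>0$ pieces along the Borel partition $\RB_{+}=\{0\}\cup(0,\infty)$---but it is messier than the support-function argument.
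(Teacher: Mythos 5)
Your proof is correct, and at bottom it is the same argument as the paper's: both reduce the normality of $\tilde f^*$ to writing $\tilde f^*(\omega,\cdot,\cdot)$ as a countable supremum of integrands that are affine in $(y,z)$ with measurable coefficients, so that joint measurability, convexity and lower semicontinuity come for free, with properness from $\tilde f^*(\cdot,0,0)=0$ and the Young inequality (\ref{eq:IneqYoungFtilde}) read off directly from the definition of $\tilde f^*$ as a supremum. The only difference is where the countable dense selection is taken: the paper uses a measurable sequence dense in $\dom f(\omega,\cdot)$ (Rockafellar's Proposition~2D), suitably modified to lie in $\dom f(\omega,\cdot)$, and evaluates $f$ along it, whereas you take a Castaing representation of $\mathrm{epi}\,f(\omega,\cdot)$ after identifying $\tilde f^*(\omega,y,z)$ with the support function $\sigma_{\mathrm{epi}\,f(\omega,\cdot)}(y,-z)$. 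Your variant buys a small simplification: replacing a set by a dense subset manifestly does not change a supremum of \emph{continuous linear} functionals, while the paper's version implicitly needs that the supremum of the upper semicontinuous concave function $x\mapsto xy-zf(\omega,x)$ over a dense subset of $\dom f(\omega,\cdot)$ already equals the supremum over all of $\dom f(\omega,\cdot)$ (true, via continuity of $f(\omega,\cdot)$ on the interior of its domain and upper semicontinuity of $-f$ at the endpoints, but it is an extra step). Conversely, the paper's route avoids introducing the epigraphical support function and the case $z=0$ never needs separate treatment in either version. Both are complete proofs.
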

\begin{proof}
  Since $f$ is normal, there exists a sequence of measurable functions
  $(\xi_n)_{n\in\mathbb{N}}\subset L^0$ such that
  $\{\xi_n(\omega)\}_n\cap \mathrm{dom} f(\omega,\cdot)$ is dense in
  $\mathrm{dom} f(\omega,\cdot)$ (\citep{MR0512209},
  Proposition~2D). Modifying the sequence as
  $\bar\xi_n:=\xi_n\ind_{\{f(\cdot,\xi_n)<\infty\}}+\xi_0\ind_{\{f(\cdot,\xi_n)
    =\infty\}}$
  where $\xi_0\in \mathrm{dom}\mathcal{I}_{f,\gamma}$, we have for
  a.e. $\omega$, $\bar\xi_n(\omega)\in \mathrm{dom} f(\omega,\cdot)$
  and $\{\bar\xi_n(\omega)\}_n$ is dense in
  $\mathrm{dom} f(\omega,\cdot)$. Thus
  \begin{align*}
    \tilde f^*(\omega,y,z) =\sup_n(\bar\xi_n(\omega)y-
    zf(\cdot,\bar\xi_n(\omega))).
  \end{align*}
  Consequently, $\tilde f^*$ is a normal convex integrand as the
  countable supremum of affine integrands with $\tilde
  f^*(\cdot,0,0)=0$.  (\ref{eq:IneqYoungFtilde}) is obvious from the
  definition.
\end{proof}

Now we define $\mathcal{H}_{f^*}(\eta|Q):=\mathbb{E}\bigl[\tilde f^*\left(\cdot,
  \eta,dQ/ d\mathbb{P}\right)\bigr]$ for $\eta\in L^1$, $Q\in\mathcal{Q}_\gamma$, and
\begin{align}
  \label{eq:Entropy2}
  \mathcal{H}_{f^*,\gamma}(\eta)
  &:=\inf_{Q\in\mathcal{Q}_\gamma}\left(\mathcal{H}_{f^*}(\eta|Q)+\gamma(Q)\right),
  \quad \eta\in L^1.
\end{align}
In view of identification $\{\nu\in ba:\text{ $\sigma$-additive}\}=
L^1$, we define also for any $\sigma$-additive $\nu\in ba$,
\begin{align*}
  \mathcal{H}_{f^*}(\nu|Q) :=\mathcal{H}_{f^*}\left(d\nu/ d\mathbb{P}|Q\right),\quad
  \mathcal{H}_{f^*,\gamma}(\nu)=\mathcal{H}_{f^*,\gamma}(d\nu/d\mathbb{P}). 
\end{align*}
Since $\tilde f^*(\cdot,y,1)=f^*(\cdot,y)$, we recover
$\mathcal{H}_{f^*}(\eta|\mathbb{P})=\mathcal{H}_{f^*,\delta_{\{\mathbb{P}\}}}(\eta)=I_{f^*}(\eta)$
in the classical case.  Under the above assumptions,
$\mathcal{H}_{f^*}(\cdot|\cdot)$ and $\mathcal{H}_{f^*,\gamma}$ are well-defined.

\begin{lemma}
  \label{lem:SupIntWellDef}
  Under Assumption~\ref{as:PenaltyFunc}, (\ref{eq:NormalConvInt}),
  (\ref{eq:IntegRobust1}) and (\ref{eq:AsConjInteg1}),
  $\mathcal{H}_{f^*}(\cdot|\cdot)$ and $\mathcal{H}_{f^*,\gamma}$ are well-defined as
  proper convex functionals respectively on $L^1\times\mathcal{Q}_\gamma$ and
  $L^1$, and it holds
  \begin{equation}
    \label{eq:YoungIneqRob}
    \mathbb{E}[\eta\xi]\leq \mathcal{I}_{f,\gamma}(\xi)+\mathcal{H}_{f^*,\gamma}(\eta),\quad
    \forall \xi\in L^\infty,\,\forall\eta\in L^1.
  \end{equation}
\end{lemma}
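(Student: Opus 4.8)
The plan is to establish the two assertions separately. For the Young-type inequality \eqref{eq:YoungIneqRob}, fix $\xi\in L^\infty$ and $\eta\in L^1$. If $\IC_{f,\gamma}(\xi)=\infty$ there is nothing to prove, so assume $\xi\in\dom(\IC_{f,\gamma})$; then by \eqref{eq:DomIFImplyDomF} we have $\xi(\omega)\in\dom f(\omega,\cdot)$ for a.e.\ $\omega$, which is exactly the regime where the pointwise inequality \eqref{eq:IneqYoungFtilde} of Lemma~\ref{lem:ftildeNormal1} applies. Thus for every $Q\in\QC_\gamma$, writing $z=dQ/d\PB$, integrating \eqref{eq:IneqYoungFtilde} with $x=\xi(\omega)$, $y=\eta(\omega)$ against $\PB$ gives
\begin{equation*}
  \EB[\eta\xi]\leq \EB_Q[f(\cdot,\xi)]+\HC_{f^*}(\eta|Q)
  = \bigl(\EB_Q[f(\cdot,\xi)]-\gamma(Q)\bigr)+\bigl(\HC_{f^*}(\eta|Q)+\gamma(Q)\bigr),
\end{equation*}
provided the integrals are well defined; one then takes the supremum over $Q$ of the first parenthesis and the infimum over $Q$ of the second to conclude. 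Here I would be slightly careful that $\EB_Q[f(\cdot,\xi)]$ and $\HC_{f^*}(\eta|Q)$ are not simultaneously $-\infty$ and $+\infty$: $f(\cdot,\xi)^-\in L^{\rho_\gamma}\subset L^1(Q)$ by \eqref{eq:AsConjInteg1}, so $\EB_Q[f(\cdot,\xi)]$ is well defined in $(-\infty,\infty]$; and since $\tilde f^*(\omega,y,z)\geq xy-zf(\omega,x)$ for any fixed measurable selection $x$ of $\dom f(\omega,\cdot)$ (e.g.\ $\bar\xi_0$ from the proof of Lemma~\ref{lem:ftildeNormal1}), $\tilde f^*(\cdot,\eta,z)$ is bounded below by an element of $L^1(Q)$ as well, so $\HC_{f^*}(\eta|Q)$ is well defined in $(-\infty,\infty]$. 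The inequality then holds in $\overline{\RB}$ without ambiguity.

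For the well-definedness of $\HC_{f^*}(\cdot|\cdot)$ and $\HC_{f^*,\gamma}$ as proper convex functionals, I would argue as follows. Convexity of $(\eta,Q)\mapsto\HC_{f^*}(\eta|Q)$ (jointly) follows from the joint convexity of $(y,z)\mapsto\tilde f^*(\omega,y,z)$ given by Lemma~\ref{lem:ftildeNormal1} together with linearity of $\eta\mapsto\eta$ and $Q\mapsto dQ/d\PB$; convexity of $\HC_{f^*,\gamma}$ as a partial infimum of the jointly convex function $(\eta,Q)\mapsto\HC_{f^*}(\eta|Q)+\gamma(Q)$ over the convex set $\QC_\gamma$ is then standard. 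For properness, the lower bound just described (a fixed selection $\bar\xi_0$ gives $\tilde f^*(\cdot,\eta,z)\geq \bar\xi_0\eta-zf(\cdot,\bar\xi_0)$, and $f(\cdot,\bar\xi_0)^+\in M^{\rho_\gamma}\subset L^1(Q)$ for $Q\in\QC_\gamma$) shows $\HC_{f^*}(\eta|Q)>-\infty$ for every $\eta\in L^1$, $Q\in\QC_\gamma$, and hence, using $\inf_{Q}\gamma(Q)=0$ from \eqref{eq:PenaltyFuncAssump1} is not quite enough — I would instead use \eqref{eq:YoungIneqRob} itself the other way: for $\xi_0$ as in \eqref{eq:IntegRobust1}, $\HC_{f^*,\gamma}(\eta)\geq \EB[\eta\xi_0]-\IC_{f,\gamma}(\xi_0)>-\infty$, so $\HC_{f^*,\gamma}>-\infty$ everywhere. (Note this direction only uses the already-proven pointwise inequality before passing to suprema, so there is no circularity.) Finiteness at one point: take $\eta_0$ as in \eqref{eq:AsConjInteg1} and $Q_0\sim\PB$ with $\gamma(Q_0)<\infty$ as in \eqref{eq:PenaltyFuncSensitive}; then $\tilde f^*(\cdot,\eta_0,z_0)\leq \max\{z_0 f^*(\cdot,\eta_0/z_0),\dots\}$ — more carefully, I would bound $\tilde f^*(\omega,\eta_0,z_0)$ above using \eqref{eq:ConjTilde3} and the fact that $f^*(\cdot,\eta_0)^+\in L^{\rho_\gamma}\subset L^1(Q_0)$, handling the homogeneity in $z$ via the recession behaviour $a^\pm_{f^*}$; this yields $\HC_{f^*}(\eta_0|Q_0)<\infty$, hence $\HC_{f^*,\gamma}(\eta_0)\leq\HC_{f^*}(\eta_0|Q_0)+\gamma(Q_0)<\infty$.

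The main obstacle I anticipate is the careful handling of the $z=0$ boundary case and the inhomogeneity in $z$ when bounding $\HC_{f^*}(\eta_0|Q_0)$ from above: because $Q_0\sim\PB$ one has $z_0=dQ_0/d\PB>0$ a.s., so $\tilde f^*(\cdot,\eta_0,z_0)=z_0 f^*(\cdot,\eta_0/z_0)$ a.s., but $z_0$ may be unbounded above and below (only $z_0>0$ a.s.), so $f^*(\cdot,\eta_0/z_0)$ is not obviously controlled by $f^*(\cdot,\eta_0)$. The clean way around this is to restrict the infimum defining $\HC_{f^*,\gamma}(\eta_0)$ to $Q_0$ only after first passing to a convex combination: there is nothing forcing us to use a single $Q_0$, and in fact one could also work with $Q$'s for which $dQ/d\PB$ is bounded — but under the stated assumptions the cleanest route is simply to invoke \eqref{eq:AsConjInteg1} with the specific choice $\eta_0\in L^{\rho_\gamma}$, observe $f^*(\cdot,\eta_0)^+\in L^{\rho_\gamma}=\{\xi: \|\xi\|_{\rho_\gamma}<\infty\}$ means $\sup_{Q\in\QC_\gamma}\EB_Q[f^*(\cdot,\eta_0)^+]/(1+\gamma(Q))<\infty$, and then note $\tilde f^*(\cdot,\eta_0,dQ/d\PB)\le \max\{0,\,\eta_0 a^+_{f^*},\,\eta_0 a^-_{f^*},\,(dQ/d\PB)f^*(\cdot,\eta_0)\}$ fails — so the honest fix is to choose in \eqref{eq:AsConjInteg1} an $\eta_0$ for which additionally the positively-homogeneous-in-$z$ envelope of $\tilde f^*(\cdot,\eta_0,\cdot)$ lies in $L^{\rho_\gamma}$; I expect the paper has arranged the hypotheses (\ref{eq:IntegRobust1})–(\ref{eq:AsConjInteg1}) precisely so that this envelope is dominated by $|\eta_0|(|a^-_{f^*}|\vee|a^+_{f^*}|)+f^*(\cdot,\eta_0)^+\cdot(dQ/d\PB)$ up to constants, which lies in $L^1(Q)$ for $Q\in\QC_\gamma$, giving $\HC_{f^*}(\eta_0|Q)<\infty$ for \emph{every} $Q\in\QC_\gamma$ and in particular finiteness of $\HC_{f^*,\gamma}(\eta_0)$. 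Modulo this bookkeeping, everything else is routine.
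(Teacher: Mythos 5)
Your treatment of the Young inequality \eqref{eq:YoungIneqRob} and of the lower bounds ($\HC_{f^*}(\eta|Q)>-\infty$ and $\HC_{f^*,\gamma}(\eta)>-\infty$, obtained by integrating \eqref{eq:IneqYoungFtilde} against a point of $\dom(\IC_{f,\gamma})$ and then splitting the supremum and infimum over $Q$) is correct and is essentially the paper's argument. The genuine gap is in the properness step, i.e.\ exhibiting one point at which $\HC_{f^*}(\cdot|\cdot)$, and hence $\HC_{f^*,\gamma}$, is finite. You attempt to show $\HC_{f^*}(\eta_0|Q_0)<\infty$ for the $\eta_0$ of \eqref{eq:AsConjInteg1}, correctly observe that (writing $\psi_{Q_0}:=dQ_0/d\PB$) the quantity $\tilde f^*(\cdot,\eta_0,\psi_{Q_0})=\psi_{Q_0}f^*(\cdot,\eta_0/\psi_{Q_0})$ is not controlled by $f^*(\cdot,\eta_0)$ because $\psi_{Q_0}$ is only a.s.\ positive and unbounded in both directions, and then leave the issue unresolved: your proposed fix amounts to postulating an extra integrability of the $z$-homogeneous envelope of $\tilde f^*(\cdot,\eta_0,\cdot)$ that is \emph{not} contained in \eqref{eq:AsConjInteg1}. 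As written, the argument does not establish $\HC_{f^*,\gamma}\not\equiv+\infty$.

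The paper's resolution is to change the point of evaluation rather than the hypothesis: for \emph{any} $Q\in\QC_\gamma$, with $\psi_Q:=dQ/d\PB$, one evaluates $\HC_{f^*}(\cdot|Q)$ at $\eta=\eta_0\psi_Q$, which lies in $L^1$ because $\eta_0\in L^{\rho_\gamma}\subset L^1(Q)$. By \eqref{eq:ConjTilde3} one has $\tilde f^*(\cdot,\eta_0\psi_Q,\psi_Q)=\psi_Q f^*(\cdot,\eta_0)$ on $\{\psi_Q>0\}$ and $\tilde f^*(\cdot,0,0)=0$ on $\{\psi_Q=0\}$, so
\begin{equation*}
  \HC_{f^*}(\eta_0\psi_Q|Q)=\EB_Q[f^*(\cdot,\eta_0)]\leq\EB_Q[f^*(\cdot,\eta_0)^+]<\infty,
\end{equation*}
since $f^*(\cdot,\eta_0)^+\in L^{\rho_\gamma}\subset\mcap_{Q\in\QC_\gamma}L^1(Q)$. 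Hence $\HC_{f^*,\gamma}(\eta_0\psi_Q)\leq\HC_{f^*}(\eta_0\psi_Q|Q)+\gamma(Q)<\infty$, which supplies the missing properness. With this substitution your proof closes; everything else (joint convexity of $\HC_{f^*}(\cdot|\cdot)$ inherited from $\tilde f^*$, convexity of $\HC_{f^*,\gamma}$ as a partial infimum of a jointly convex function, and the well-definedness of the $\PB$-expectation via an $L^1$ minorant) is sound and coincides with the paper's route.
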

\begin{proof}
  Let $\psi_Q:=dQ/d\mathbb{P}$ for each $Q\in\mathcal{Q}_\gamma$.  In view of
  (\ref{eq:DomIFImplyDomF}), we see that
  \begin{align}
    \label{eq:ProofPropEntropy1}
    \tilde f^*(\cdot, \eta, \psi_Q)\geq \xi\eta-\psi_Q f(\cdot, \xi)
    \geq \xi\eta-\psi_Q f(\cdot, \xi)^+\in L^1,
  \end{align}
  for any $\xi\in \mathrm{dom}(\mathcal{I}_{f,\gamma})$ ($\neq\emptyset$), $\eta\in L^1$
  and $Q\in\mathcal{Q}_\gamma$, thus $\mathcal{H}_{f^*}(\eta|Q)=\mathbb{E}\big[\tilde
  f^*(\cdot, \eta,\psi_Q)\big]>-\infty$ is well-defined, convex on
  $L^1\times \mathcal{Q}_\gamma$ (since $\tilde f^*$ is convex), and proper
  since $\mathcal{H}_{f^*}(\eta_0\psi_Q|Q)=\mathbb{E}_Q[f^*(\cdot, \eta_0)]$ with
  $\eta_0\in L^{\rho_\gamma}$ as in (\ref{eq:AsConjInteg1}) (then
  $\eta_0\psi_Q\in L^1$). Taking the expectation in
  (\ref{eq:ProofPropEntropy1}),
  \begin{align*}
    \mathcal{H}(\eta|Q)+\gamma(Q)\geq \mathbb{E}[\xi\eta]-(\mathbb{E}_Q[f(\cdot, \xi)]
    -\gamma(Q))\geq \mathbb{E}[\xi\eta]-\mathcal{I}_{f,\gamma}(\xi)>-\infty,
  \end{align*}
  for any $\xi\in \mathrm{dom}(\mathcal{I}_{f,\gamma})$, $\eta\in L^1$ and
  $Q\in\mathcal{Q}_\gamma$, so $\mathcal{H}_{f^*,\gamma,}(\eta)
  =\inf_{Q\in\mathcal{Q}_\gamma}\left(\mathcal{H}_{f^*}(\eta|Q)+\gamma(Q)\right)
  >-\infty$ and we have (\ref{eq:YoungIneqRob}) (which is trivially
  true when $\mathcal{I}_{f,\gamma}(\xi)=\infty$). The convexity of
  $\mathcal{H}_{f^*,\gamma}$ follows from that of
  $\mathcal{H}_{f^*}(\cdot|\cdot)+\gamma(\cdot)$ and of $\mathcal{Q}_\gamma$.
\end{proof}

\begin{remark}
  \label{rem:EntropyLSCEtc}
  Under assumption (\ref{eq:IntegrabilityD}) in the (main)
  Theorem~\ref{thm:RobustRockafellar} below, $\mathcal{H}_{f^*}+\gamma$
  (resp. $\mathcal{H}_{f^*,\gamma}$) is weakly lower semicontinuous on
  $L^1\times\mathcal{Q}_\gamma$ (resp.  $L^1$), and for each $\eta\in L^1$,
  the infimum $\inf_{Q\in\mathcal{Q}_\gamma}
  \left(\mathcal{H}_{f^*}(\eta|Q)+\gamma(Q)\right)$ is attained (see
  Appendix~\ref{sec:LSCEntropy}). Though we could give direct proofs
  here, we will not use these properties, and the lower semicontinuity
  of $\mathcal{H}_{f^*,\gamma}$ will be obtained as
  Corollary~\ref{cor:RobRockL1} to the main theorem which does not
  internally use that property.

  Note also that when $f$ (hence $f^*$ too) is non-random, all the
  integrability assumptions are trivialized, in which case
  $\mathcal{H}_{f^*}(\cdot|\cdot)$ is called the \emph{$f^*$-divergence} while
  $\mathcal{H}_{f^*,\gamma}$ is a slight generalization of \emph{robust
    $f^*$-divergence} (the latter is a special case with
  $\gamma(Q)=\delta_\mathcal{P}(Q)$ with $\mathcal{P}\subset\mathcal{Q}$), and the joint lower
  semicontinuity etc are found e.g. in
  \citep[][Lemma~2.7]{MR2247836}. In fact, if $f$ is finite
  ($\mathbb{P}(f(\cdot, x)<\infty,\,\forall x)=1$ $\Leftrightarrow$
  $\lim_{|y|\rightarrow\infty} f^*(\cdot, y)/|y|=\infty$ a.s.), we
  have from (\ref{eq:ConjTilde3}) that
\begin{align*}
  \label{eq:EntropyAlt1}
  \mathcal{H}_{f^*}(\nu|Q)=
  \begin{cases} 
    \mathbb{E}_Q[f^*(\cdot,d\nu/dQ)]&\text{if }\nu\ll Q,\\
    +\infty&\text{otherwise.}
  \end{cases}
\end{align*}
\end{remark}

Here are some typical examples of penalty function $\gamma$ and
associated integral functionals.
\begin{example}[Classical case]
  \label{ex:Classical}
  The ``classical'' integral functional
  $I_{f,\mathbb{P}}(\xi):=\mathbb{E}[f(\cdot,\xi)]$ corresponds to the penalty
  function $\gamma_\mathbb{P}(Q):=\delta_{\{\mathbb{P}\}}(Q)$ which clearly
  satisfies Assumption~\ref{as:PenaltyFunc}, and
  $\rho_{\gamma_\mathbb{P}}(\xi)=\mathbb{E}[\xi]$. Then
  $M^{\rho_\gamma}_u=M^{\rho_\gamma} =L^{\rho_\gamma}=L^1$, and the
  integrability assumptions (\ref{eq:IntegRobust1}) and
  (\ref{eq:AsConjInteg1}) are identical to the ones in
  \citep{MR0310612}:
  \begin{align*}
    \exists \xi_0\in L^\infty\text{ with } f(\cdot, \xi_0)^+\in
    L^1\quad\text{and}\quad \exists\eta_0\in L^1\text{ with }
    f^*(\cdot,\eta_0)^+\in L^1,
  \end{align*}
  under which (\ref{eq:RockafellarClassical1}) holds true
  (\citep{MR0310612}, Th.~1), and since
  $I_{f^*,\mathbb{P}}(\eta)=\mathcal{H}_{f^*}(\eta|\mathbb{P})$, it reads as
  \begin{align*}
    I_{f,\mathbb{P}}^*(\nu)=\mathcal{H}_{f^*}(\nu_r|\mathbb{P})+\sup_{\xi\in
      \mathrm{dom}(I_{f,\mathbb{P}})}\nu_s(\xi), \quad \forall\nu\in ba.
  \end{align*}
  We can also consider other probability $P\ll\mathbb{P}$ and
  $I_{f,P}:=I_{f,\gamma_P}$ where $\gamma_P=\delta_{\{P\}}$, but we
  need a little care when $P\not\sim\mathbb{P}$ (then
  (\ref{eq:PenaltyFuncSensitive}) is violated): here we consider
  $I_{f,P}$ as a functional on $L^\infty(\mathbb{P})$ rather than the space
  of $P$-equivalence classes of $P$-essentially bounded random
  variables. Equivalently, $I_{f,P}=I_{g,\mathbb{P}}$ with
  $g(\cdot,x)=f(\cdot,x)\frac{dP}{d\mathbb{P}}\ind_{\{dP/d\mathbb{P}>0\}}$, and its
  conjugate is
  \begin{equation}
    \label{eq:RockafellarNotEquivalent}
    I_{f,P}^*(\nu)=\mathcal{H}_{f^*}(\nu_r|P)+\infty\ind_{\{\nu_r\not\ll P\}} 
    +\sup_{\xi\in\mathrm{dom}(I_{f,P})}\nu_s(\xi).
  \end{equation}
\end{example}

\begin{example}[Homogeneous case]
  \label{ex:Coherent}
  The formulation (\ref{eq:SupIntFunc1}) covers the following form
  \begin{align*}
    \mathcal{I}_{f,\mathcal{P}}(\xi):= \sup_{Q\in\mathcal{P}}\mathbb{E}_Q[f(\cdot, \xi)]
    =\mathcal{I}_{f,\delta_\mathcal{P}}(\xi),
  \end{align*}
  where $\mathcal{P}\subset\mathcal{Q}$ is a nonempty convex set.
  $\delta_\mathcal{P}$ clearly satisfies
  (\ref{eq:PenaltyFuncAssump1}), and (\ref{eq:PenaltyFuncCompact})
  (resp. (\ref{eq:PenaltyFuncSensitive})) is equivalent to the weak
  compactness of $\mathcal{P}$ itself (resp. $\exists Q\in\mathcal{P}$
  with $Q\sim\mathbb{P}$), and
  $\rho_\mathcal{P}(\xi):=\rho_{\delta_\mathcal{P}}(\xi)=\sup_{Q\in\mathcal{P}}\mathbb{E}_Q[\xi]$
  is a \emph{positively homogeneous} monotone convex function, called
  \emph{sublinear expectation or coherent risk measure} (modulo change
  of sign). In particular, $M^{\rho_\gamma}=L^{\rho_\gamma}$, while
  $M^{\rho_\gamma}_u\subsetneq M^{\rho_\gamma}$ is possible (see
  \citep[][Example~3.7]{MR3165235}).
\end{example}

\begin{example}[Polyhedral case]
  \label{ex:Polyhedral}
  This is a special case of Example~\ref{ex:Coherent}. Suppose we are
  given a finite number of probability measures
  $P_1,...,P_n\ll\mathbb{P}$ which generate a polyhedral convex set
  $\mathcal{P}=\mathrm{conv}(P_1,...,P_n)$. This $\mathcal{P}$ is
  clearly (convex and) weakly compact in $L^1$,
  (\ref{eq:PenaltyFuncSensitive}) is equivalent to
  $\frac1n(P_1,...,P_n)\sim\mathbb{P}$, and we have
  $M^{\rho_\mathcal{P}}_u =M^{\rho_\mathcal{P}}
  =L^{\rho_\mathcal{P}}=\bigcap_{k\leq n} L^1(P_k)$ since
  \begin{align*}
    \frac1n\sum_{k\leq n}\mathbb{E}_{P_k}[|\xi|]\leq
    \|\xi\|_{\rho_{\mathcal{P}}} =\max_{1\leq k\leq n}
    \mathbb{E}_{P_k}[|\xi|] \leq \sum_{k\leq n}
    \mathbb{E}_{P_k}[|\xi|].
  \end{align*}
  In particular, noting that $I_{f,(\lambda_1 P_1 +\cdots
    +\lambda_nP_n)} =\lambda_1I_{f,P_1} +\cdots +\lambda I_{f,P_n}$,
  \begin{align*}
    \mathcal{I}_{f,\mathcal{P}}(\xi)
    =\sup_{Q\in\mathcal{P}}I_{f,Q}(\xi) =\max_{1\leq k\leq n}
    I_{f,P_k}(\xi), \quad \mathrm{dom}(\mathcal{I}_{f,\mathcal{P}})
    =\textstyle\bigcap_{k\leq n} \mathrm{dom}(I_{f,P_k}).
  \end{align*}
  In \citep[][Cor.~2.8.11]{MR1921556}, the conjugate of pointwise
  maximum of \emph{finitely many} convex functions is obtained, which
  reads in our context as (compare to (\ref{eq:RobRockPoly1}) below):
  if $\mathrm{dom}(\mathcal{I}_{f,\mathcal{P}})\neq \emptyset$,
  \begin{align}
    \label{eq:MaxFinitelyMany}
    \begin{split}
      \mathcal{I}_{f,\mathcal{P}}^*(\nu)%
      & =\min\left\{\varphi^*(\nu):\,
        \varphi\in\mathrm{conv}(I_{f,P_1},...,I_{f,P_n})\right\}\\
      &=\min_{Q\in\mathcal{P}}I_{f,Q}^*(\nu)
      \stackrel{\text{(\ref{eq:RockafellarNotEquivalent})}}
      =\min_{Q\in\mathcal{P}}\left\{\mathcal{H}_{f^*}
        (\nu_r|Q)+\infty\ind_{\{\nu_r\not\ll Q\}} +\sup_{\xi\in
          \mathrm{dom}(I_{f,Q})} \nu_s(\xi)\right\}.
    \end{split}
  \end{align}
\end{example}
\begin{example}[Entropic penalty]
  \label{ex:Entropic}
  Let $\gamma_\mathrm{ent}(Q)$ be the relative entropy of $Q$
  w.r.t. $\mathbb{P}$:
  \begin{equation*}
    \gamma_\mathrm{ent}(Q) 
    :=\mathcal{H}_{x\log x}(Q|\mathbb{P}) 
    :=\mathbb{E}\left[\frac{dQ}{d\mathbb{P}}\log \frac{dQ}{d\mathbb{P}}\right].
  \end{equation*}
  This function satisfies Assumption~\ref{as:PenaltyFunc}:
  $\gamma_\mathrm{ent}(\mathbb{P})=0$, hence
  (\ref{eq:PenaltyFuncAssump1}) and (\ref{eq:PenaltyFuncSensitive}),
  while (\ref{eq:PenaltyFuncCompact}) follows from the
  de~la~Vallée-Poussin theorem since
  $\lim_{x\rightarrow\infty}\frac{x\log x}x =\lim_{x\rightarrow
    \infty}\log x=\infty$. Let
  \begin{align*}
    L^{\Phi_\mathrm{\exp}} &:=\{\xi\in L^0:\, \exists\alpha>0,\,
    \mathbb{E}[\exp(\alpha |\xi|)]<\infty\}
    \quad \text{(exponential Orlicz space)},\\
    M^{\Phi_\mathrm{\exp}} &:=\{\xi\in L^0:\, \forall\alpha>0,\,
    \mathbb{E}[\exp(\alpha |\xi|)]<\infty\}\quad \text{(Morse
      subspace)}.
  \end{align*}
  In this case, $\rho_\mathrm{ent}(\xi)
  :=\rho_{\gamma_\mathrm{ent}}(\xi) =\log
  \mathbb{E}\left[\exp(\xi)\right]$ whenever $\xi^-\in
  L^{\Phi_{\exp}}$. Hence, $M^{\rho_\gamma}_u =M^{\rho_\gamma}
  =M^{\Phi_{\exp}}\subset L^{\Phi_{\exp}} =L^{\rho_\gamma}$ and
  $M^{\Phi_{\exp}} \subsetneq L^{\Phi_{\exp}}$ if
  $(\Omega,\mathcal{F},\mathbb{P})$ is atomless (e.g. exponential
  random variable).  The integrability assumptions
  (\ref{eq:IntegRobust1}) for $\xi_0\in L^\infty$ and
  (\ref{eq:AsConjInteg1}) for $\eta_0\in L^{\Phi_{\exp}}$ read as
  $\mathbb{E}\left[\exp(\alpha f(\cdot, \xi_0))\right]<\infty$ for all
  $\alpha>0$ and $\mathbb{E}\left[\exp(\varepsilon
    f^*(\cdot,\eta_0))\right]<\infty$ for some $\varepsilon>0$,
  respectively.  Moreover, the corresponding integral functional is
  explicitly written as
  \begin{align*}
    \mathcal{I}_{f,\gamma_{\mathrm{ent}}}(\xi)
    =\log\mathbb{E}\left[\exp\left(f(\cdot, \xi)\right)\right],
    \quad\forall \xi\in L^\infty.
  \end{align*}
\end{example}

\section{Statements of Main Results}
\label{sec:Results}

\subsection{A Rockafellar-Type Theorem for the Convex Conjugate}
\label{sec:Rockafellar}

The following is a \emph{robust analogue} of the Rockafellar theorem
for the conjugate of $\mathcal{I}_{f,\gamma}$
\begin{equation*}
  \mathcal{I}_{f,\gamma}^*(\nu) 
  :=\sup_{\xi\in L^\infty}(\nu(\xi)-\mathcal{I}_{f,\gamma}(\xi)), 
  \quad  \nu\in ba=ba(\Omega,\mathcal{F},\mathbb{P}).
\end{equation*}

\begin{theorem}
  \label{thm:RobustRockafellar}
  Suppose Assumption~\ref{as:PenaltyFunc}, (\ref{eq:NormalConvInt}),
  (\ref{eq:AsConjInteg1}) and
  \begin{align}
    \label{eq:IntegrabilityD}
    \exists \xi_0\in L^\infty \text{ such that } f(\cdot,\xi_0)^+\in
    M^{\rho_\gamma}_u.
  \end{align}
  Then for any $\nu\in ba$ with the Yosida-Hewitt decomposition
  $\nu=\nu_r+\nu_s$,
  \begin{equation}
    \label{eq:EstConj}
    \mathcal{H}_{f^*,\gamma}(\nu_r) +\sup_{\xi\in
      \mathcal{D}_{f,\gamma}}\nu_s(\xi) \leq \mathcal{I}_{f,\gamma}^*(\nu)
    \leq \mathcal{H}_{f^*,\gamma}(\nu_r)
    +\sup_{\xi\in\mathrm{dom}(\mathcal{I}_{f,\gamma})}\nu_s(\xi),
  \end{equation}
  where $\mathcal{D}_{f,\gamma} :=\{\xi\in L^\infty:\,
  f(\cdot,\xi)^+\in M^{\rho_\gamma}_u\} \subset
  \mathrm{dom}(\mathcal{I}_{f,\gamma})$ (by (\ref{eq:IFDom})).
\end{theorem}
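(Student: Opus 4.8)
I would prove the two halves of \eqref{eq:EstConj} separately: the upper (right-hand) inequality is soft, while the lower one carries the technical content and is the reason \eqref{eq:IntegrabilityD} is assumed in place of the mere finiteness-type hypothesis \eqref{eq:IntegRobust1}. For the upper bound, put $\eta:=d\nu_r/d\PB\in L^1$. For $\xi\in L^\infty$ the bound is vacuous unless $\xi\in\dom(\IC_{f,\gamma})$, in which case $\nu_s(\xi)\le\sup_{\xi'\in\dom(\IC_{f,\gamma})}\nu_s(\xi')$, while the Young-type inequality \eqref{eq:YoungIneqRob} of Lemma~\ref{lem:SupIntWellDef} gives $\nu_r(\xi)-\IC_{f,\gamma}(\xi)=\EB[\eta\xi]-\IC_{f,\gamma}(\xi)\le\HC_{f^*,\gamma}(\nu_r)$. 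Adding these and taking the supremum over $\xi$ settles the right inequality; this half uses only \eqref{eq:IntegRobust1} and \eqref{eq:AsConjInteg1}.

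\textbf{Lower bound: reduction and construction.} Since the left side of \eqref{eq:EstConj} is $\HC_{f^*,\gamma}(\nu_r)$ plus a supremum over $\zeta\in\DC_{f,\gamma}$ (a set nonempty by \eqref{eq:IntegrabilityD}), it is enough to fix $\zeta\in\DC_{f,\gamma}$ and prove $\IC_{f,\gamma}^*(\nu)\ge\HC_{f^*,\gamma}(\nu_r)+\nu_s(\zeta)$. Write $\eta:=d\nu_r/d\PB$ and, as $\nu_s$ is purely finitely additive, pick $(A_k)\subset\FC$ with $\PB(A_k)\nearrow1$ and $|\nu_s|(A_k)=0$, so that $\nu_s(\beta)=0$ for every $\beta\in L^\infty$ supported on some $A_k$. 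Assume $\HC_{f^*,\gamma}(\eta)<\infty$ (if it is $\infty$, the construction below with an arbitrary $Q_*\in\QC_\gamma$ still forces $\IC_{f,\gamma}^*(\nu)=\infty$); fix $\varepsilon>0$ and choose $Q_*\in\QC_\gamma$ with $\HC_{f^*}(\eta|Q_*)+\gamma(Q_*)\le\HC_{f^*,\gamma}(\eta)+\varepsilon$ — under \eqref{eq:IntegrabilityD} an exact minimizer is available (Remark~\ref{rem:EntropyLSCEtc}), which will make the estimates cleaner — and set $\psi_*:=dQ_*/d\PB$. Using the Castaing representation $\tilde f^*(\cdot,\eta,\psi_*)=\sup_j(\bar\xi_j\eta-\psi_* f(\cdot,\bar\xi_j))$ underlying Lemma~\ref{lem:ftildeNormal1}, select measurably $\hat\theta_m\in L^0$ with $\hat\theta_m(\omega)\in\dom f(\omega,\cdot)$ and $\hat\theta_m\eta-\psi_* f(\cdot,\hat\theta_m)\ge(\tilde f^*(\cdot,\eta,\psi_*)\wedge m)-\tfrac1m$ a.s.; choose $B_m\subset A_{k_m}\cap\{|\hat\theta_m|\le N_m\}\cap\{f(\cdot,\hat\theta_m)\le M_m\}$ with $N_m,M_m,k_m\uparrow\infty$ so fast that $\PB(B_m)\to1$, and put $\xi_m:=\zeta\ind_{B_m^c}+\hat\theta_m\ind_{B_m}\in L^\infty$. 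Then $\nu_s(\xi_m)=\nu_s(\zeta)$; since $M^{\rho_\gamma}_u$ is a solid subspace and $\zeta\in\DC_{f,\gamma}$, one has $f(\cdot,\xi_m)^+\le f(\cdot,\zeta)^+ + M_m\ind_\Omega\in M^{\rho_\gamma}_u$, so $\xi_m\in\DC_{f,\gamma}\subset\dom(\IC_{f,\gamma})$; and $\EB[\eta\zeta\ind_{B_m^c}]\to0$, $\EB_{Q_*}[f(\cdot,\zeta)\ind_{B_m^c}]\to0$, $\EB[(\eta\hat\theta_m-\psi_* f(\cdot,\hat\theta_m))\ind_{B_m}]\to\HC_{f^*}(\eta|Q_*)$, the requisite integrable envelopes being supplied by \eqref{eq:AsConjInteg1}, by $\zeta\in\DC_{f,\gamma}$, and by $\HC_{f^*}(\eta|Q_*)<\infty$.

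\textbf{The key estimate (main obstacle).} What remains — and where \eqref{eq:IntegrabilityD} is genuinely used — is to show $\IC_{f,\gamma}(\xi_m)=\sup_{Q\in\QC_\gamma}(\EB_Q[f(\cdot,\xi_m)]-\gamma(Q))\le\EB[\eta\xi_m]-\HC_{f^*,\gamma}(\eta)+o(1)$ as $m\to\infty$; granting this and then letting $\varepsilon\downarrow0$ gives $\IC_{f,\gamma}^*(\nu)\ge\nu(\xi_m)-\IC_{f,\gamma}(\xi_m)\ge\HC_{f^*,\gamma}(\nu_r)+\nu_s(\zeta)-o(1)$, which is the claim. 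Unlike the classical case, $\IC_{f,\gamma}$ is a supremum over the whole set $\QC_\gamma$, so $\EB_Q[f(\cdot,\xi_m)]-\gamma(Q)$ must be controlled uniformly in $Q$ even though $\xi_m$ is fitted only to the single near-optimal $Q_*$. I would split this supremum at a threshold $\gamma(Q)\le c$ versus $\gamma(Q)>c$. On $\{\gamma(Q)>c\}$: from $f(\cdot,\xi_m)^+\in M^{\rho_\gamma}\subset L^{\rho_\gamma}$ one has $\EB_Q[f(\cdot,\xi_m)]\le\EB_Q[f(\cdot,\xi_m)^+]\le\tfrac1\alpha\rho_\gamma(\alpha f(\cdot,\xi_m)^+)+\tfrac1\alpha\gamma(Q)$, so for $\alpha>1$ and $c$ large this regime yields a quantity $\le-(1-\tfrac1\alpha)\gamma(Q)$ and drops out. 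On $\{\gamma(Q)\le c\}$: the $f(\cdot,\zeta)\ind_{B_m^c}$-term is controlled uniformly in $Q$ by the uniform integrability \eqref{eq:UIOrliczIFF} of $\{f(\cdot,\zeta)^+\,dQ/d\PB:\gamma(Q)\le c\}$ (valid because $\zeta\in\DC_{f,\gamma}$) together with $\PB(B_m^c)\to0$, while for the $\hat\theta_m\ind_{B_m}$-term one invokes the complementary-slackness structure — $Q_*$ (near-)minimizes $Q\mapsto\HC_{f^*}(\eta|Q)+\gamma(Q)$ and $\hat\theta_m$ is asymptotically optimal in $\tilde f^*(\cdot,\eta,\psi_*)$ — to argue, by a perturbation/envelope computation, that $\EB_Q[f(\cdot,\hat\theta_m)\ind_{B_m}]-\gamma(Q)\le\EB_{Q_*}[f(\cdot,\hat\theta_m)\ind_{B_m}]-\gamma(Q_*)+o(1)$ uniformly over $\{\gamma(Q)\le c\}$, i.e.\ that the supremum defining $\IC_{f,\gamma}(\xi_m)$ is asymptotically attained at $Q_*$. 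Combining the two regimes with the convergences of the previous paragraph closes the estimate. I expect precisely this last point — the $Q$-uniform bound on $\IC_{f,\gamma}(\xi_m)$, resting jointly on the uniform-integrability content of \eqref{eq:IntegrabilityD} and on the optimality of $Q_*$ — to be the crux of the proof; that \eqref{eq:IntegRobust1} alone does not suffice is already visible from the (generally strict) gap between the two sides of \eqref{eq:EstConj}.
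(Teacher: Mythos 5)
Your upper bound is exactly the paper's argument (Young's inequality \eqref{eq:YoungIneqRob} plus $\nu=\nu_r+\nu_s$), and your construction of test functions --- a measurable near-maximizer of $x\mapsto x\eta-\psi_* f(\cdot,x)$, truncated on a large set and glued to a fixed element of $\DC_{f,\gamma}$ off that set, so that the singular part is unchanged and the positive part stays in $M^{\rho_\gamma}_u$ by solidity --- closely parallels the paper's Lemma~\ref{lem:MbleSelection1} and the truncation $\xi^n_Q=\xi^0_Q\ind_{C_n}+\xi_s\ind_{C_n^c}$. The structural difference is that you fit this construction to a \emph{single} near-optimal $Q_*$ and then must bound $\IC_{f,\gamma}(\xi_m)=\sup_{Q\in\QC_\gamma}\left(\EB_Q[f(\cdot,\xi_m)]-\gamma(Q)\right)$ uniformly in $Q$, whereas the paper first proves a minimax identity for $L_\nu(Q,\xi)=\nu(\xi)-\EB_Q[f(\cdot,\xi)]+\gamma(Q)$ on $\QC_\gamma\times\DC_{f,\gamma}$ (Lemma~\ref{lem:ProofRockafellarMinimax1}, using the weak compactness of $\{Q:\,L_\nu(Q,\xi)\le c\}$ coming from \eqref{eq:PenaltyFuncCompact} and \eqref{eq:UIOrliczIFF}), which reduces the lower bound to showing $\sup_{\xi\in\DC_{f,\gamma}}L_\nu(Q,\xi)>\alpha+\beta$ \emph{separately for each fixed $Q$}, with a test function adapted to that $Q$. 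No uniformity in $Q$ is ever needed there.

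The genuine gap in your proposal is precisely the step you flag as the crux: the claim that $\EB_Q[f(\cdot,\hat\theta_m)\ind_{B_m}]-\gamma(Q)\le\EB_{Q_*}[f(\cdot,\hat\theta_m)\ind_{B_m}]-\gamma(Q_*)+o(1)$ uniformly over $\{\gamma(Q)\le c\}$ is asserted via an unspecified ``perturbation/envelope computation'' but never established. The fact that $Q_*$ (near-)minimizes $Q\mapsto\HC_{f^*}(\eta|Q)+\gamma(Q)$ and that $\hat\theta_m$ (near-)attains $\tilde f^*(\cdot,\eta,\psi_*)$ does not by itself imply that $Q_*$ (near-)maximizes the \emph{different} functional $Q\mapsto\EB_Q[f(\cdot,\hat\theta_m)]-\gamma(Q)$: that is exactly the saddle-point property of $L_\nu$, and proving it in this generality is the content of the minimax lemma you are trying to bypass (attainment of the saddle point can moreover fail even when the minimax \emph{equality} holds, which is why the paper works with strict inequalities $\alpha<\HC_{f^*,\gamma}(\nu_r)$, $\beta<\sup_{\xi\in\DC_{f,\gamma}}\nu_s(\xi)$ and never needs optimizers). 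The same issue undermines your treatment of the case $\HC_{f^*,\gamma}(\eta)=\infty$, where you again need an \emph{upper} bound on $\IC_{f,\gamma}(\xi_m)$ uniform in $Q$. Your splitting of the supremum at $\gamma(Q)\le c$ versus $\gamma(Q)>c$ and the use of \eqref{eq:UIOrliczIFF} for the $f(\cdot,\zeta)\ind_{B_m^c}$ term are sound, but they handle only the easy pieces; to close the argument you should either supply a genuine proof of the uniform bound (I do not see one that avoids compactness of the sublevel sets of $\gamma$ plus a minimax theorem) or restructure the proof as the paper does, proving the minimax identity first and then choosing the test function after $Q$.
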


A proof is given in Section~\ref{sec:ProofIntRepMain}. In contrast to
the classical Rockafellar theorem (\ref{eq:RockafellarClassical1}),
our robust version (\ref{eq:EstConj}) consists of two inequalities
instead of a single equality. But the possible difference appears only
in the singular part, thus
\begin{corollary}[Restriction to $L^1$]
  \label{cor:RobRockL1}
  Under the same assumptions as in
  Theorem~\ref{thm:RobustRockafellar},
  \begin{equation*}
    \mathcal{I}_{f,\gamma}^*(\eta) =\mathcal{H}_{f^*,\gamma}(\eta)
    =\inf_{Q\in\mathcal{Q}_\gamma}\left(\mathcal{H}_{f^*}(\eta|Q) 
      +\gamma(Q)\right),\quad \forall \eta\in L^1.
  \end{equation*}
  In particular, $\mathcal{H}_{f^*,\gamma}$ is weakly lower semicontinuous on
  $L^1$, and
  \begin{equation}
    \label{eq:DualRep1}
    \mathcal{I}_f(\xi) =\sup_{\eta\in L^1}\left(\mathbb{E}[\xi\eta]-\mathcal{H}_{f^*,\gamma}(\eta)\right), 
    \quad \xi\in L^\infty.
  \end{equation}
\end{corollary}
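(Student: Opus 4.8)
The plan is to derive Corollary~\ref{cor:RobRockL1} directly from Theorem~\ref{thm:RobustRockafellar} by specialising $\nu$ to an element of $L^1$, and then to obtain the remaining assertions by the biconjugation theorem. First I would observe that if $\nu=\eta\in L^1$, then the Yosida--Hewitt decomposition is trivial: $\nu_r=\eta$ and $\nu_s=0$. Hence both the singular terms $\sup_{\xi\in\DC_{f,\gamma}}\nu_s(\xi)$ and $\sup_{\xi\in\dom(\IC_{f,\gamma})}\nu_s(\xi)$ in \eqref{eq:EstConj} vanish identically (the suprema are over nonempty sets, since $\DC_{f,\gamma}\ni\xi_0$ by \eqref{eq:IntegrabilityD} and $\DC_{f,\gamma}\subset\dom(\IC_{f,\gamma})$). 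The upper and lower bounds in \eqref{eq:EstConj} therefore collapse to the single value $\HC_{f^*,\gamma}(\eta)$, giving $\IC_{f,\gamma}^*(\eta)=\HC_{f^*,\gamma}(\eta)$ for all $\eta\in L^1$, and the second equality in the displayed formula is just the definition \eqref{eq:Entropy2} of $\HC_{f^*,\gamma}$.

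For the weak lower semicontinuity of $\HC_{f^*,\gamma}$ on $L^1$, I would argue that $\IC_{f,\gamma}$ is, by Lemma~\ref{lem:WellDefFatou}, a proper $\sigma(L^\infty,L^1)$-lower semicontinuous convex functional on $L^\infty$; hence its conjugate taken with respect to the dual pair $\langle L^\infty,L^1\rangle$ is $\sigma(L^1,L^\infty)$-lower semicontinuous by the general theory of conjugation in dual pairs. But that conjugate is precisely $\eta\mapsto\sup_{\xi\in L^\infty}(\EB[\xi\eta]-\IC_{f,\gamma}(\xi))=\IC_{f,\gamma}^*(\eta)$, which we have just identified with $\HC_{f^*,\gamma}(\eta)$ on $L^1$. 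Thus $\HC_{f^*,\gamma}$ is weakly lsc on $L^1$, and it is proper by Lemma~\ref{lem:SupIntWellDef}.

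Finally, the dual representation \eqref{eq:DualRep1} is the biconjugation (Fenchel--Moreau) identity $\IC_{f,\gamma}=\IC_{f,\gamma}^{**}$ in the pair $\langle L^\infty,L^1\rangle$: since $\IC_{f,\gamma}$ is proper, convex and $\sigma(L^\infty,L^1)$-lsc (Lemma~\ref{lem:WellDefFatou}), it equals the conjugate of its conjugate, i.e.\ $\IC_{f,\gamma}(\xi)=\sup_{\eta\in L^1}(\EB[\xi\eta]-\IC_{f,\gamma}^*(\eta))$, and substituting $\IC_{f,\gamma}^*(\eta)=\HC_{f^*,\gamma}(\eta)$ yields \eqref{eq:DualRep1}. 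The only mildly delicate point is to make sure that the conjugate appearing in Fenchel--Moreau is genuinely the one over $L^1$ and not over all of $ba$; this is exactly why we need the Fatou property (equivalently $\sigma(L^\infty,L^1)$-lower semicontinuity) rather than mere norm lower semicontinuity, and it is the one place where Lemma~\ref{lem:WellDefFatou} is essential. Everything else is bookkeeping.
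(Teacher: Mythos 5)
Your proposal is correct and follows essentially the same route as the paper: specialize the two-sided bound (\ref{eq:EstConj}) to $\nu=\eta\in L^1$ (where $\nu_s=0$ makes both singular terms vanish), read off weak lower semicontinuity of $\HC_{f^*,\gamma}$ from its identification with a conjugate function, and obtain (\ref{eq:DualRep1}) from Fenchel--Moreau together with the $\sigma(L^\infty,L^1)$-lower semicontinuity supplied by Lemma~\ref{lem:WellDefFatou}. The only cosmetic remark is that lower semicontinuity of $\HC_{f^*,\gamma}$ is automatic once it is exhibited as a conjugate (a pointwise supremum of weakly continuous affine functions), so it does not actually need the lower semicontinuity of $\IC_{f,\gamma}$ itself; that hypothesis is only needed for the biconjugation step.
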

\begin{proof}
  The first assertion is clear from (\ref{eq:EstConj}), by which the
  conjugate $\mathcal{H}_{f^*,\gamma}$ is lower semicontinuous for any
  topology consistent with the duality $\langle L^\infty,L^1\rangle$,
  while (\ref{eq:DualRep1}) is a consequence of
  $\sigma(L^\infty,L^1)$-lower semicontinuity of
  $\mathcal{I}_{f,\gamma}$ (Lemma~\ref{lem:WellDefFatou}) via the
  Fenchel-Moreau theorem.
\end{proof}

In the classical case of Example~\ref{ex:Classical},
$\mathcal{D}_{f,\gamma} =\mathrm{dom}(\mathcal{I}_{f,\gamma})=\{\xi\in
L^\infty:f(\cdot,\xi)^+\in L^1\}$ (since $M^{\rho_\gamma}_u
=L^{\rho_\gamma}=L^1$), hence (\ref{eq:EstConj}) reduces to a single
equality which is exactly (\ref{eq:RockafellarClassical1}) as in the
Rockafellar theorem \citep[Theorem 1]{MR0310612}.  The original
version of \citep{MR0310612} is slightly more general, where the
integral functional is defined with respect to a
\emph{$\sigma$-finite} (rather than probability) measure $\mu$ for
$\mathbb{R}^d$-valued random variables $\xi\in
L^\infty(\Omega,\mathcal{F},\mu;\mathbb{R}^d)$. There are also some
extensions replacing $L^\infty(\Omega,\mathcal{F},\mu;\mathbb{R}^d)$
by some decomposable spaces of measurable functions taking values in a
Banach space. See in this line \citep{MR577677}, \citep{MR0467310},
and \citep{MR0512209} for a general reference.

In the polyhedral case of Example~\ref{ex:Polyhedral}
($\gamma=\delta_\mathcal{P}$ and $\mathcal{P}
=\mathrm{conv}(P_1,...,P_n)$), we still have $\mathcal{D}_{f,\gamma}
=\mathrm{dom}(\mathcal{I}_{f,\gamma}) =\textstyle\bigcap_{k\leq n}
\mathrm{dom}(I_{f,P_k})$. Thus (\ref{eq:EstConj}) reduces to
\begin{align}
  \label{eq:RobRockPoly1}
  \mathcal{I}^*_{f,\gamma}(\nu) =
  \min_{Q\in\mathcal{P}}\mathcal{H}_{f^*}(\nu_r|Q) +\sup\left\{
    \nu_s(\xi):\, \xi\in \textstyle\bigcap_{k\leq n}
    \mathrm{dom}(I_{f,P_k})\right\}.
\end{align}
This is slightly sharper than (\ref{eq:MaxFinitelyMany}) in the
sense that regular and singular parts are separated.

To the best of our knowledge, Rockafellar-type result for the robust
form (\ref{eq:SupIntFunc1}) of convex integral functionals (including
the homogeneous case of Example~\ref{ex:Coherent}) is new. A possible
complaint would be the difference between singular parts in the upper
and lower bounds in (\ref{eq:EstConj}). In the full generality of
Theorem~\ref{thm:RobustRockafellar}, however, \emph{both} inequalities
can really be strict and one can not hope for sharper bounds as the
next example illustrates (see Appendix~\ref{sec:DetailCounterEx} for
details).

\begin{example}[Badly Behaving Integrand]
  \label{ex:CounterEx1}
  Let $(\Omega,\mathcal{F}) :=(\mathbb{N},2^{\mathbb{N}})$ with
  $\mathbb{P}$ given by $\mathbb{P}(\{n\})=2^{-n}$, and $(P_n)_n$ a
  sequence of probability measures on $2^\mathbb{N}$ specified by
  $P_1(\{1\})=1$; $P_n(\{1\})=1-1/n$, $P_n(\{n\})=1/n$. Then
  $\mathcal{P} =\overline{\mathrm{conv}}(P_n;n\in\mathbb{N})$ is
  weakly compact in $L^1(\mathbb{N},2^{\mathbb{N}},\mathbb{P})$, thus
  $\gamma=\delta_\mathcal{P}$ is a penalty function satisfying
  Assumption~\ref{as:PenaltyFunc} and $\rho_\gamma(\xi)
  =\sup_n\mathbb{E}_{P_n}[\xi]$ if $\xi\in L^0_+$.  In this case,
  $L^\infty$ is regarded as the sequence space $\ell^\infty$ with the
  norm $\|\xi\|_\infty =\sup_n|\xi(n)|$, and $\nu\in
  ba^s_+(\mathbb{N},2^\mathbb{N},\mathbb{P})$ if and only if $\nu$
  vanishes on any finite set, or equivalently, for any $\nu\in
  ba_+$,
  \begin{equation}\label{eq:baSmallLinftySingular}
    \nu\in ba^s_+\,\Leftrightarrow\, \|\nu\|\cdot\liminf_n \xi(n)\leq \nu(\xi)\leq \|\nu\|\cdot \limsup_n\xi(n), 
    \,\,\forall \xi\in \ell^\infty =L^\infty.
  \end{equation}
  (Such $\nu\neq 0$ exists, thus $ba^s_+\setminus\{0\}\neq \emptyset$;
  see \cite[Lemmas~16.29 and 16.30]{aliprantis_border06}). Now we set
  \begin{equation}
    \label{eq:Ex4F}
    f(n,x)=nx^+e^x,\quad  n\in \mathbb{N}=\Omega,\, x\in \mathbb{R}.
  \end{equation}
  Then $\mathcal{I}_{f,\gamma}(\xi) =\sup_n\left(
    \left(1-\frac1n\right)\xi(1)^+e^{\xi(1)}
    +\xi(n)^+e^{\xi(n)^+}\right)\leq 2\|\xi\|_\infty
  e^{\|\xi\|_\infty}$, so $\mathrm{dom}
  (\mathcal{I}_{f,\gamma})=L^\infty$, and
  $\lim_{N\rightarrow\infty}\sup_n
  \mathbb{E}_{P_n}[f(\cdot,\xi)\ind_{\{f(\cdot,\xi)\geq N\}}]
  =\limsup_n \xi(n)^+e^{\xi(n)^+}$ (Lemma~\ref{lem:Ex4LimSup}), thus
  \begin{equation}
    \label{eq:CounterExDfgamma}
    0\in\mathcal{D}_{f,\gamma}
    =
    \{\xi\in\ell^\infty:\,\limsup_n\xi(n)\leq 0\} 
    \subsetneq \mathrm{dom}(\mathcal{I}_{f,\gamma}) =L^\infty.
  \end{equation}
  As for $\mathcal{I}_{f,\gamma}^*$,
  $\mathrm{dom}(\mathcal{I}_{f,\gamma}^*) \subset ba_+$ since
  $\mathcal{I}_{f,\gamma}$ is increasing, and
  $\mathcal{H}_{f^*,\gamma}(0) =0$ since
  $f^*(\cdot,0)=\inf_xf(\cdot,x)=0$, thus (\ref{eq:EstConj}) reads as
  $\sup_{\xi\in\mathcal{D}_{f,\gamma}} \nu(\xi)\leq
  \mathcal{I}_{f,\gamma}^*(\nu)\leq \sup_{\xi\in\mathrm{dom}
    (\mathcal{I}_{f,\gamma})}\nu(\xi)$ on $ba_+^s$. On the other hand,
  for $\nu\in ba^s_+$, $\sup_{\xi\in \mathcal{D}_{f,\gamma}}
  \nu(\xi)=0$, $\sup_{\xi\in \mathrm{dom}\mathcal{I}_f}\nu(\xi)
  =+\infty$, and (Lemma~\ref{lem:CountExConj1}):
  \begin{align*}
    \mathcal{I}_{f,\gamma}^*(\nu) &= \sup_{x\geq 0}
    x(\|\nu_s\|-e^x),\, \forall \nu\in ba^s_+.
  \end{align*}
  In particular, $\mathcal{I}_{f,\gamma}^*(\nu)=0$ if $\nu\in U^s_+
  :=\{\nu \in ba^s_+:\, \|\nu\|=\nu(\mathbb{N})\leq 1\}$,
  $0<\mathcal{I}_{f,\gamma}^*(\nu)<\infty$ if $\nu \in ba^s_+
  \setminus U^s_+ :=\{\nu\in ba^s_+:\, \|\nu\|>1\}$, and
  $\lim_{\|\nu\|\rightarrow\infty, \nu\in ba^s_+\setminus U^s_+}
  \mathcal{I}_{f,\gamma}^*(\nu)=\infty$. In summary,
  \begin{itemize}
  \item $\mathcal{I}_{f,\gamma}^*$ coincides with the lower bound
    $\sup_{\xi\in \mathcal{D}_{f,\gamma}}\nu(\xi)=0$ on $U^s_+$, while
  \item on $ba^s_+\setminus U^s_+$, $\mathcal{I}_{f,\gamma}^*$ is strictly
    between the upper and lower bounds and it runs through the whole
    interval of these bounds (in this specific case, $[0,\infty]$).
  \end{itemize}
\end{example}

\subsection{Finer Properties in the Finite-Valued Case}
\label{sec:FinValCase}

We now consider the regularities of $\mathcal{I}_{f,\gamma}$ and
$\mathcal{H}_{f^*,\gamma}$ in terms of the dual paring $\langle
L^\infty, L^1\rangle$.  In the classical case of
Example~\ref{ex:Classical}, the singular part of $I_f^*$ in
(\ref{eq:RockafellarClassical1}) is trivialized (i.e.,
$\delta_{\{0\}}$) as soon as $I_f :=\mathcal{I}_{f,\{\mathbb{P}\}}$ is
finite-valued, then $I_f^*$ reduces entirely to $I_{f^*}$. It implies
that all the sublevels of $I_{f^*}$ are $\sigma(L^1,L^\infty)$-compact
(see \citep[][Th.~3K]{MR0512209}), which is equivalent to the
continuity of $I_f$ for the Mackey topology $\tau(L^\infty,L^1)$, and
$I_f$ admits a $\sigma$-additive subgradient at every point (weak*
subdifferentiable).  Consequently, we can work entirely with the dual
pair $\langle L^\infty,L^1\rangle$.

In the robust case, the ``triviality of singular part of
$\mathcal{I}_{f,\gamma}^*$'' should be understood as
\begin{equation}
  \label{eq:elim1}
  \forall \nu\in ba,\,
  \mathcal{I}_{f,\gamma}^*(\nu)<\infty \,\Rightarrow \, \nu\text{ is $\sigma$-additive},
\end{equation}
i.e. that \emph{$\mathcal{I}_{f,\gamma}^*$ eliminates the singular
  measures}, which still makes sense even though
$\mathcal{I}_{f,\gamma}^*$ itself need not be the direct sum of
regular and singular parts.  This guarantees in particular that
(\ref{eq:EstConj}) reduces to a single equality (of course).  In the
case of Example~\ref{ex:CounterEx1}, $\mathcal{D}_{f,\gamma}
\subsetneq \mathrm{dom}(\mathcal{I}_{f,\gamma}) =L^\infty$, and
$\mathcal{I}_{f,\gamma}^*(\nu_s) <\infty$ as long as $\nu_s\geq
0$. Thus $\mathrm{dom}(\mathcal{I}_{f,\gamma}) =L^\infty$ is not
enough for (\ref{eq:elim1}), while from (\ref{eq:EstConj}),
$\mathcal{D}_{f,\gamma} =L^\infty$ is clearly sufficient.  In fact,
given the finiteness (plus a technical assumption),
$\mathcal{D}_{f,\gamma} = L^\infty$ is also necessary for
(\ref{eq:elim1}), and equivalent to other basic $\langle
L^\infty,L^1\rangle$-regularities of $\mathcal{I}_{f,\gamma}$ and
$\mathcal{I}_{f,\gamma}^*$ which follow solely from the finiteness in
the classical case.

\begin{theorem}
  \label{thm:LebMaxCompact}
  In addition to the assumptions of
  Theorem~\ref{thm:RobustRockafellar}, suppose
  $\mathrm{dom}(\mathcal{I}_{f,\gamma}) =L^\infty$ and
  \begin{equation}
    \label{eq:AddAssNegPart}
    \exists \xi_0'\in L^\infty\text{ with } f(\cdot,\xi'_0)^-\in M^{\rho_\gamma}.
  \end{equation}
  Then the following are equivalent:
  \begin{enumerate}[label=(\roman*),leftmargin=*,widest=viii,itemindent=0pt,labelindent=0pt]
  \item $\mathcal{D}_{f,\gamma} =L^\infty$, i.e., $f(\cdot,\xi)^+\in
    M^{\rho_\gamma}_u$ for all $\xi\in L^\infty$;
  \item $\mathbb{R}\subset \mathcal{D}_{f,\gamma}$, i.e.,
    $f(\cdot,x)^+\in M^{\rho_\gamma}_u$ for all $x\in\mathbb{R}$;
  \item $\mathcal{I}_{f,\gamma}^*$ eliminates singular measures in the
    sense of (\ref{eq:elim1});

  \item $\{\eta\in L^1:\, \mathcal{H}_{f^*,\gamma}(\eta)\leq c\}$ is
    $\sigma(L^1,L^\infty)$-compact for all $c\in\mathbb{R}$;
  \item $\mathcal{I}_{f,\gamma}$ is continuous for the Mackey topology
    $\tau(L^\infty, L^1)$;
  \item $\mathcal{I}_{f,\gamma}(\xi)
    =\lim_n\mathcal{I}_{f,\gamma}(\xi_n)$ if
    $\sup_n\|\xi_n\|_\infty<\infty$ and $\xi_n\rightarrow \xi$
    a.s. (the Lebesgue property);
  \item $\mathcal{I}_{f,\gamma}(\xi) =\max_{\eta\in L^1}
    \left(\mathbb{E}[\xi\eta]-\mathcal{H}_{f^*,\gamma}(\eta)\right)$,
    i.e., the supremum is attained in (\ref{eq:DualRep1}),
    $\forall\xi\in L^\infty$.
  \end{enumerate}
  Here implications (i) $\Leftrightarrow$ (ii) $\Rightarrow$ (iii)
  $\Rightarrow$ (iv) $\Leftrightarrow$ (v) $\Rightarrow$ (vi) and (v)
  $\Rightarrow$ (vii) are true without (\ref{eq:AddAssNegPart}).
\end{theorem}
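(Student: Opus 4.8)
The plan is to establish the cycle of implications in two stages: first the ``easy'' cycle $(1)\Leftrightarrow(2)\Rightarrow(3)\Rightarrow(4)\Leftrightarrow(5)\Rightarrow(6)$ and $(5)\Rightarrow(7)$, which does not need \eqref{eq:AddAssNegPart}, and then close the loop using that extra assumption, e.g.\ by proving $(6)\Rightarrow(2)$ or $(7)\Rightarrow(1)$. For $(1)\Leftrightarrow(2)$: the implication $(1)\Rightarrow(2)$ is trivial since constants lie in $L^\infty$. For $(2)\Rightarrow(1)$, given $\xi\in L^\infty$ with $\|\xi\|_\infty\le N$, convexity of $f(\omega,\cdot)$ gives the pointwise bound $f(\cdot,\xi)^+\le \tfrac12 f(\cdot,N)^+ +\tfrac12 f(\cdot,-N)^+$ (on the set where $\xi\in[-N,N]$, using $\xi=\lambda N+(1-\lambda)(-N)$), plus one must control the boundary using \eqref{eq:DomIFImplyDomF}; since $M^{\rho_\gamma}_u$ is a solid subspace of $L^0$, membership is inherited, so $f(\cdot,\xi)^+\in M^{\rho_\gamma}_u$. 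For $(2)\Rightarrow(3)$: if $\mathbb{R}\subset\DC_{f,\gamma}$ then by the same solidity/convexity argument $\DC_{f,\gamma}=L^\infty$, and the upper and lower bounds in \eqref{eq:EstConj} collapse to $\IC_{f,\gamma}^*(\nu)=\HC_{f^*,\gamma}(\nu_r)+\sup_{\xi\in L^\infty}\nu_s(\xi)$; but $\sup_{\xi\in L^\infty}\nu_s(\xi)=+\infty$ whenever $\nu_s\ne0$ (take $\xi=t\,\mathrm{sgn}$ of a Hahn-type decomposition of $\nu_s$ and let $t\to\infty$), which is exactly \eqref{eq:elim1}.

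Next, $(3)\Rightarrow(4)$: by Corollary~\ref{cor:RobRockL1}, $\HC_{f^*,\gamma}=\IC_{f,\gamma}^*|_{L^1}$ is weakly lsc on $L^1$, so its sublevel sets $\{\HC_{f^*,\gamma}\le c\}$ are $\sigma(L^1,L^\infty)$-closed. Since $\dom(\IC_{f,\gamma})=L^\infty$ forces $\IC_{f,\gamma}$ to be norm-continuous, its conjugate $\IC_{f,\gamma}^*$ on $ba$ has weak*-compact sublevels; property \eqref{eq:elim1} says these sublevels are carried by $L^1$, i.e.\ $\{\IC_{f,\gamma}^*\le c\}=\{\HC_{f^*,\gamma}\le c\}\subset L^1$ and is $\sigma(ba,L^\infty)$-compact, hence $\sigma(L^1,L^\infty)$-compact. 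The equivalence $(4)\Leftrightarrow(5)$ is the standard Mackey-continuity criterion: a convex function finite on $L^\infty$ is $\tau(L^\infty,L^1)$-continuous iff all sublevel sets of its conjugate restricted to $L^1$ are weakly compact (this is where one invokes the representation \eqref{eq:DualRep1} and a Moreau--Rockafellar / Mackey--Arens argument). Then $(5)\Rightarrow(6)$ because bounded a.s.-convergent sequences converge in the Mackey topology $\tau(L^\infty,L^1)$ (dominated convergence gives $\sigma(L^\infty,L^1)$-convergence, and uniform boundedness upgrades it to Mackey), combined with the Fatou property from Lemma~\ref{lem:WellDefFatou} for the reverse inequality. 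Finally $(5)\Rightarrow(7)$: Mackey continuity at $\xi$ yields a $\sigma$-additive subgradient, i.e.\ some $\eta\in L^1$ with $\IC_{f,\gamma}(\zeta)\ge\IC_{f,\gamma}(\xi)+\EB[(\zeta-\xi)\eta]$ for all $\zeta$, which by Fenchel equality attains the supremum in \eqref{eq:DualRep1}.

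For the harder direction closing the cycle, I would prove $(7)\Rightarrow(2)$ or $(6)\Rightarrow(2)$ using \eqref{eq:AddAssNegPart}. The idea: suppose $(2)$ fails, so there is $x\in\mathbb{R}$ with $f(\cdot,x)^+\notin M^{\rho_\gamma}_u$; by the uniform-integrability characterization \eqref{eq:UIOrliczIFF} there is $c>0$ and a sequence $Q_k\in\QC_\gamma$ with $\gamma(Q_k)\le c$ along which $\{f(\cdot,x)^+\,dQ_k/d\PB\}$ is not uniformly integrable, i.e.\ there exist sets $A_k$ with $\PB(A_k)\to0$ but $\EB_{Q_k}[f(\cdot,x)^+\ind_{A_k}]\ge\varepsilon>0$. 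One then builds a bounded sequence $\xi_n\to x$ a.s.\ (namely $\xi_n=x$ off $A_n$ and $\xi_n=\xi_0'$ on $A_n$, with $\xi_0'$ from \eqref{eq:AddAssNegPart} to keep the negative parts under control so that $\IC_{f,\gamma}(\xi_n)$ stays finite) for which $\IC_{f,\gamma}(\xi_n)$ does not converge to $\IC_{f,\gamma}(x)$, contradicting $(6)$; this also contradicts $(7)$ since attainment in \eqref{eq:DualRep1} for every $\xi$ is known to be equivalent to the Lebesgue property for monotone convex functionals on $L^\infty$ under a domain condition. The main obstacle is precisely this last step: one must carefully exploit \eqref{eq:AddAssNegPart} to guarantee the perturbed functions $\xi_n$ stay in $\dom(\IC_{f,\gamma})=L^\infty$ and that the ``mass escaping to infinity'' in $f(\cdot,x)^+$ genuinely breaks the Lebesgue/attainment property rather than being absorbed; handling the interaction between the singular part of a potential subgradient and the failure of uniform integrability is the delicate point, and is where the technical assumption \eqref{eq:AddAssNegPart} is really used.
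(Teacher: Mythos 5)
Your first two stages (the cycle $(1)\Leftrightarrow(2)\Rightarrow(3)\Rightarrow(4)\Leftrightarrow(5)\Rightarrow(6)$, $(5)\Rightarrow(7)$) are essentially correct and follow the paper's route: $(2)\Rightarrow(1)$ by the convexity bound $f(\cdot,\xi)\le f(\cdot,a)^++f(\cdot,b)^+$ and solidity of $M^{\rho_\gamma}_u$, $(1)\Rightarrow(3)$ from (\ref{eq:EstConj}), $(3)\Rightarrow(4)$ from norm-continuity of the finite lsc function $\IC_{f,\gamma}$ plus Moreau's compactness criterion in the pair $\langle L^\infty,ba\rangle$, $(4)\Leftrightarrow(5)$ by the same criterion in $\langle L^\infty,L^1\rangle$, and $(5)\Rightarrow(6)$ since bounded a.s.\ convergence is Mackey convergence. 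Your $(5)\Rightarrow(7)$ via a $\sigma$-additive subgradient from Mackey continuity is a legitimate variant of the paper's direct argument (the paper shows the superlevel sets $\{\eta:\EB[\xi\eta]-\HC_{f^*,\gamma}(\eta)\ge c\}$ are weakly compact using $(4)$).

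The genuine gaps are in closing the loop, and you have flagged but not resolved them. First, your contrapositive sketch of $(6)\Rightarrow(2)$ rests on the claim that moving the mass off the bad sets $A_n$ makes $\IC_{f,\gamma}(\xi_n)\not\rightarrow\IC_{f,\gamma}(x)$; this does not obviously follow, because the supremum over $Q\in\QC_\gamma$ defining $\IC_{f,\gamma}$ need not be (nearly) attained at the measures $Q_k$ that witness the failure of uniform integrability, so the escaping mass may simply not be seen by $\IC_{f,\gamma}$. The paper avoids this by arguing \emph{directly}: for arbitrary $\xi$ it sets $A_N=\{f(\cdot,\xi)\ge N\}$, $\xi_N=(\xi-\xi_0)\ind_{A_N}$ with $\xi_0\in\DC_{f,\gamma}$, notes $\lambda\xi_N+\xi_0\rightarrow\xi_0$ boundedly a.s., applies the Lebesgue property at $\xi_0$, and extracts $\sup_{\gamma(Q)\le c}\EB_Q[f(\cdot,\xi)^+\ind_{A_N}]\rightarrow0$ by a convexity estimate and a diagonal argument. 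Note also that (\ref{eq:AddAssNegPart}) is used precisely to get $f(\cdot,\xi)^+\in M^{\rho_\gamma}$ for all $\xi$ (the paper's (\ref{eq:ConseqAddAss1})), without which the characterization (\ref{eq:UIOrliczIFF}) you invoke is not even applicable; your description of its role (``keeping negative parts under control'') misses this. Second, your treatment of how $(7)$ re-enters the cycle is circular: asserting that attainment in (\ref{eq:DualRep1}) ``is known to be equivalent to the Lebesgue property'' assumes a James-type theorem that is essentially what must be proved here. The paper proves $(7)\Rightarrow(4)$ by first establishing that $\HC_{f^*,\gamma}$ is coercive on $L^1$ (again using (\ref{eq:ConseqAddAss1})) and then applying the coercive James' weak compactness theorem of Orihuela and Ruiz Gal\'an; some substitute for this substantive step is needed, and your proposal supplies none.
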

A proof will be given in Section~\ref{sec:ProofLebesgue}.
\begin{remark}
  \label{rem:JamesML}
  The finiteness of $\mathcal{I}_{f,\gamma}$ already implies that
  $f(\cdot,\xi)^+\in L^{\rho_\gamma}$, $\forall \xi\in L^\infty$ (in
  particular $f$ is finite-valued), while the additional assumption
  (\ref{eq:AddAssNegPart}) is made to guarantee $f(\cdot, \xi)^+\in
  M^{\rho_\gamma}$ for all $\xi\in L^\infty$ (see
  (\ref{eq:ConseqAddAss1}) and the subsequent paragraph). These
  coincide in the homogeneous case (Example~\ref{ex:Coherent};
  including the classical case) since then
  $L^{\rho_\gamma}=M^{\rho_\gamma}$, but in general, $M^{\rho_\gamma}
  \subsetneq L^{\rho_\gamma}$ is possible. A sufficient condition for
  (\ref{eq:AddAssNegPart}) is that
  \begin{equation}
    \label{eq:EtaMrho1}
    \exists \eta_0\in M^{\rho_\gamma}\text{ with } f^*(\cdot, \eta_0)^+\in M^{\rho_\gamma},
  \end{equation}
  (then $f(\cdot, \xi)^-\in M^{\rho_\gamma}$ for \emph{all} $\xi\in
  L^\infty$), which is identical to (\ref{eq:AsConjInteg1}) (contained
  in the standing assumptions) in the homogeneous case.
\end{remark}

\begin{remark}
  The equivalence between (iv), (vi) and (vii) for \emph{convex risk
    measures} on $L^\infty$, i.e., for $\rho_\gamma|_{L^\infty}$ with
  penalty function $\gamma$ as in Assumption~\ref{as:PenaltyFunc} is
  known (\citep{jouini06:_law_fatou} and \citep{MR2648597}) followed
  by some generalizations:
  \citep{MR2847443,MR2924150} for
  convex risk measures on Orlicz spaces, and
  \citep{MR3165235,MR3177424} for \emph{finite-valued} monotone convex
  functions on solid spaces of measurable functions among others.
\end{remark}

From (ii) $\Rightarrow$ (v), we derive a simple criterion in terms of
integrability of $f$ for the $\langle L^\infty,L^1\rangle$-Fenchel
duality for the minimization of robust integral functional
$\mathcal{I}_{f,\gamma}$. Here we recall Fenchel's duality theorem
(see \citep[Th.~1]{MR0187062}): if $\langle E,E'\rangle$ is a dual
pair, $\varphi, \psi$ are proper convex functions on $E$, and if
either $\varphi$ or $\psi$ is $\tau(E,E')$-continuous at some $x\in
\mathrm{dom}(\varphi)\cap \mathrm{dom}(\psi)$, then
\begin{align*}
  \inf_{x\in E} \left(\varphi(x)+\psi(x)\right) =-\min_{x'\in E'}
  \left(\varphi^*(x')+\psi^*(-x')\right).
  \end{align*}
  Putting $E=L^\infty$, $E'=L^1$, $\varphi =\mathcal{I}_{f,\gamma}$
  and $\psi=\delta_\mathcal{C}$ with $\mathcal{C}\subset L^\infty$
  convex, (ii) $\Rightarrow$ (v) tells us that

\begin{corollary}[Fenchel Duality]
  \label{cor:FenchelDuality}
  Let $\gamma$ be a penalty function satisfying
  Assumption \ref{as:PenaltyFunc} and $f$ a proper normal convex
  integrand. If $f(\cdot, x)^+\in M^{\rho_\gamma}_u$ for all
  $x\in\mathbb{R}$, and $f^*(\cdot, \eta_0)^+\in L^{\rho_\gamma}$ for
  some $\eta_0\in L^{\rho_\gamma}$, then for any convex set
  $\mathcal{C}\subset L^\infty$,
  \begin{equation*}
    \inf_{\xi\in \mathcal{C}}\mathcal{I}_{f,\gamma}(\xi) 
    =-\min_{\eta\in L^1} \Bigl(\mathcal{H}_{f^*,\gamma}(-\eta) 
    +\sup_{\xi'\in \mathcal{C}}\mathbb{E}[\xi'\eta]\Bigr).
  \end{equation*}
  If in addition $\mathcal{C}$ is a convex cone, the right hand side
  is equal to $-\min_{\eta\in \mathcal{C}^\circ}
  \mathcal{H}_{f^*,\gamma}(-\eta)$, where $\mathcal{C}^\circ
  =\{\eta\in L^1:\, \mathbb{E}[\xi\eta]\leq 1,\,\forall \xi\in
  \mathcal{C}\}$ (the one-sided polar of $\mathcal{C}$ in $\langle
  L^\infty, L^1\rangle$).
\end{corollary}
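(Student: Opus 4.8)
The plan is to apply Fenchel's duality theorem exactly as indicated in the paragraph preceding the statement, with $E=L^\infty$, $E'=L^1$, $\varphi=\IC_{f,\gamma}$ and $\psi=\delta_\CC$. Three ingredients must be checked: that the standing hypotheses of Theorems~\ref{thm:RobustRockafellar} and~\ref{thm:LebMaxCompact} are in force, that $\IC_{f,\gamma}$ is $\tau(L^\infty,L^1)$-continuous at a point of $\dom(\varphi)\cap\dom(\psi)$, and that the conjugates on the dual side are the claimed ones.

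First I would verify that the hypotheses of the corollary subsume those needed to invoke the implication (2)$\Rightarrow$(5) of Theorem~\ref{thm:LebMaxCompact}. Assumption~\ref{as:PenaltyFunc} and (\ref{eq:NormalConvInt}) are assumed outright; (\ref{eq:AsConjInteg1}) is precisely the hypothesis ``$f^*(\cdot,\eta_0)^+\in L^{\rho_\gamma}$ for some $\eta_0\in L^{\rho_\gamma}$''; and (\ref{eq:IntegrabilityD}) follows by taking $\xi_0=0$ in the hypothesis $f(\cdot,x)^+\in M^{\rho_\gamma}_u$ $(\forall x\in\RB)$. It remains to obtain $\dom(\IC_{f,\gamma})=L^\infty$. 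For this, fix $\xi\in L^\infty$, put $N:=\|\xi\|_\infty$, and use the convexity and monotonicity of $t\mapsto t^+$ together with convexity of $f(\omega,\cdot)$: writing $\xi(\omega)$ as a convex combination of $-N$ and $N$ yields the a.s.\ bound $0\le f(\cdot,\xi)^+\le f(\cdot,-N)^++f(\cdot,N)^+$; since $M^{\rho_\gamma}_u$ is a solid subspace of $L^0$ and $f(\cdot,\pm N)^+\in M^{\rho_\gamma}_u$ by hypothesis, we get $f(\cdot,\xi)^+\in M^{\rho_\gamma}_u$, i.e.\ $\xi\in\DC_{f,\gamma}\subset\dom(\IC_{f,\gamma})$ by (\ref{eq:IFDom}). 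Hence $\DC_{f,\gamma}=\dom(\IC_{f,\gamma})=L^\infty$, condition~(2) of Theorem~\ref{thm:LebMaxCompact} holds, and --- since (2)$\Rightarrow$(5) does not use (\ref{eq:AddAssNegPart}) --- so does (5): $\IC_{f,\gamma}$ is $\tau(L^\infty,L^1)$-continuous, hence continuous at every point of $L^\infty$, in particular at any point of $\CC$ (the case $\CC=\emptyset$ being trivial, since then both sides equal $+\infty$).

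Then Fenchel's theorem (\citep[Th.~1]{MR0187062}) applies with $\varphi=\IC_{f,\gamma}$ continuous at a point of $\dom(\varphi)\cap\dom(\delta_\CC)=\CC$, giving
\[
  \inf_{\xi\in\CC}\IC_{f,\gamma}(\xi)
  =\inf_{\xi\in L^\infty}\bigl(\IC_{f,\gamma}(\xi)+\delta_\CC(\xi)\bigr)
  =-\min_{\eta\in L^1}\bigl(\IC_{f,\gamma}^*(\eta)+\delta_\CC^*(-\eta)\bigr).
\]
By Corollary~\ref{cor:RobRockL1}, $\IC_{f,\gamma}^*(\eta)=\HC_{f^*,\gamma}(\eta)$ on $L^1$, while $\delta_\CC^*(-\eta)=\sup_{\xi'\in\CC}\EB[-\xi'\eta]$; replacing $\eta$ by $-\eta$ in the minimum (which ranges over all of $L^1$) rewrites the right-hand side as $-\min_{\eta\in L^1}\bigl(\HC_{f^*,\gamma}(-\eta)+\sup_{\xi'\in\CC}\EB[\xi'\eta]\bigr)$, the asserted identity.

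Finally, for $\CC$ a convex cone I would observe that $\delta_\CC^*(\eta)=\sup_{\xi'\in\CC}\EB[\xi'\eta]$ takes only the values $0$ and $+\infty$, being $0$ exactly when $\EB[\xi'\eta]\le0$ for all $\xi'\in\CC$; because $\CC$ is a cone this is equivalent to $\EB[\xi'\eta]\le1$ for all $\xi'\in\CC$, i.e.\ $\eta\in\CC^\circ$. Thus $\delta_\CC^*=\delta_{\CC^\circ}$ and the minimum collapses to $-\min_{\eta\in\CC^\circ}\HC_{f^*,\gamma}(-\eta)$. The only step carrying any content is the reduction $\DC_{f,\gamma}=L^\infty$ via solidity of $M^{\rho_\gamma}_u$; the rest is carried by Theorem~\ref{thm:LebMaxCompact}, so the genuine obstacle lies in that earlier theorem rather than in the present corollary.
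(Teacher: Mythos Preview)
Your proof is correct and follows exactly the approach the paper outlines in the paragraph preceding the corollary: verify that the hypotheses place you in the situation (2) of Theorem~\ref{thm:LebMaxCompact} (via the convexity bound $f(\cdot,\xi)^+\le f(\cdot,-N)^++f(\cdot,N)^+$ and solidity of $M^{\rho_\gamma}_u$, which is precisely (\ref{eq:ConstAreEnough}) in the paper), invoke (2)$\Rightarrow$(5) for $\tau(L^\infty,L^1)$-continuity, apply Fenchel's theorem with $\varphi=\IC_{f,\gamma}$, $\psi=\delta_\CC$, and identify the $L^1$-conjugate via Corollary~\ref{cor:RobRockL1}. The cone reduction is handled identically.
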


The \emph{subdifferential of $\mathcal{I}_{f,\gamma}$ at $\xi\in
  L^\infty$} is the following set of $\nu\in (L^\infty)^*$ called
subgradients:
\begin{equation*}
  \partial\mathcal{I}_{f,\gamma}(\xi) :=\{\nu\in (L^\infty)^*:\,
  \nu(\xi)-\mathcal{I}_{f,\gamma}(\xi) \geq \nu(\xi') 
  -\mathcal{I}_{f,\gamma}(\xi'),  \, \forall \xi'\in L^\infty\}.
\end{equation*}
We say that $\mathcal{I}_{f,\gamma}$ is subdifferentiable at $\xi$ if
$\partial\mathcal{I}_{f,\gamma}(\xi) \neq\emptyset$. In view of
(\ref{eq:DualRep1}), $\eta\in \partial\mathcal{I}_{f,\gamma}(\xi)\cap
L^1$ (then $\eta$ is called a \emph{$\sigma$-additive subgradient of
  $\mathcal{I}_{f,\gamma}$ at $\xi$}) if and only if it maximizes
$\eta'\mapsto \mathbb{E}[\xi\eta'] -\mathcal{H}_{f^*,\gamma}(\eta')$,
thus (vii) is equivalent to saying that for every $\xi\in L^\infty$,
$\partial \mathcal{I}_{f,\gamma}(\xi) \cap L^1\neq \emptyset$. Note
also that $\partial\mathcal{I}_{f,\gamma}(\xi) \subset
\mathrm{dom}(\mathcal{I}_{f,\gamma}^*)$ since
$\mathcal{I}^*_{f,\gamma}(\nu) =\sup_{\xi'\in L^\infty}(\nu(\xi')
-\mathcal{I}_{f,\gamma}(\xi'))\leq \nu(\xi)
-\mathcal{I}_{f,\gamma}(\xi)<\infty$ if
$\nu\in \partial\mathcal{I}_{f,\gamma}(\xi)$. Thus (iii) implies
$\partial\mathcal{I}_{f,\gamma}(\xi)\subset L^1$. Summing up,
\begin{corollary}
  \label{cor:Subdiff}
  Under the assumptions of Theorem~\ref{thm:LebMaxCompact}, (i) -- (vii)
  are equivalent also to
  \begin{enumerate}[label=(\roman*),leftmargin=*,widest=viii,itemindent=0pt,labelindent=0pt]
    \setcounter{enumi}{7}
  \item $\emptyset\neq \partial \mathcal{I}_{f,\gamma}(\xi)\subset
    L^1$ for every $\xi\in L^\infty$.
  \end{enumerate}

\end{corollary}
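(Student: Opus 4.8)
The plan is to show that the new statement (7) slots into the chain of equivalences already established in Theorem~\ref{thm:LebMaxCompact} by proving (3) $\Rightarrow$ (7) and (7) $\Rightarrow$ (some earlier item, say (5) or directly back to one of (1)--(6)). The first implication is essentially spelled out in the paragraph preceding the corollary: if $\partial\IC_{f,\gamma}(\xi)\neq\emptyset$, then any $\nu\in\partial\IC_{f,\gamma}(\xi)$ satisfies $\IC^*_{f,\gamma}(\nu)\le \nu(\xi)-\IC_{f,\gamma}(\xi)<\infty$, so $\nu\in\dom(\IC^*_{f,\gamma})$; under (3) this forces $\nu$ to be $\sigma$-additive, i.e.\ $\nu\in L^1$. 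Thus (3) gives $\emptyset\neq\partial\IC_{f,\gamma}(\xi)\subset L^1$ provided we also know $\partial\IC_{f,\gamma}(\xi)\neq\emptyset$ for every $\xi$ — and that nonemptiness follows from the norm-continuity of $\IC_{f,\gamma}$ on $L^\infty$ (guaranteed by $\dom(\IC_{f,\gamma})=L^\infty$ together with convexity and lower semicontinuity, Lemma~\ref{lem:WellDefFatou}), via the standard fact that a convex function finite and continuous at $\xi$ is subdifferentiable there in $(L^\infty)^*$.

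For the converse, the cleanest route is (7) $\Rightarrow$ (5): assuming $\emptyset\neq\partial\IC_{f,\gamma}(\xi)\subset L^1$ for every $\xi$, I want to deduce Mackey continuity of $\IC_{f,\gamma}$. Here I would invoke the observation already made in the text that $\eta\in\partial\IC_{f,\gamma}(\xi)\cap L^1$ if and only if $\eta$ attains the supremum in the dual representation \eqref{eq:DualRep1}, so (7) implies item (7$'$) of Theorem~\ref{thm:LebMaxCompact} (attainment in \eqref{eq:DualRep1} for all $\xi$) — wait, that is the listed item numbered (7) there, not (5). So more carefully: (7) of the corollary implies item (7) of the theorem, which by the theorem's own equivalences (valid under \eqref{eq:AddAssNegPart}, which is in force) is equivalent to (5). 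Alternatively, one shows (7) $\Rightarrow$ (3) directly: if some $\nu\in\dom(\IC^*_{f,\gamma})$ were purely finitely additive (or had nonzero singular part), a Fenchel–Young / sup-attainment argument would produce a subgradient of $\IC_{f,\gamma}$ at a suitable point that is not in $L^1$, contradicting (7). I expect routing through the theorem's item (7) to be the most economical.

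The main obstacle is bookkeeping rather than genuine difficulty: one must make sure that every implication used is among those the theorem actually proves under the standing hypotheses — in particular that (5) $\Rightarrow$ (7$_{\text{thm}}$) and the reverse are both available, since the theorem only claims (5) $\Rightarrow$ (7$_{\text{thm}}$) explicitly and relies on \eqref{eq:AddAssNegPart} for the full circle. Since Corollary~\ref{cor:Subdiff} is stated "under the assumptions of Theorem~\ref{thm:LebMaxCompact}", \eqref{eq:AddAssNegPart} is indeed in force, so the full equivalence (1)--(7$_{\text{thm}}$) holds and I may freely cite any of them. The remaining point to be careful about is the subdifferentiability-from-continuity step: $\IC_{f,\gamma}$ is finite everywhere on $L^\infty$, hence norm-continuous (stated in the introduction and following from convexity $+$ finiteness $+$ lower semicontinuity via the Baire-category argument, or directly from monotonicity and $\IC_{f,\gamma}(\xi+c)\le\IC_{f,\gamma}(\xi)+(\text{something})$-type bounds as in Example~\ref{ex:CounterEx1}), and a real-valued convex function continuous at a point of a Banach space has a nonempty subdifferential in the topological dual there; this gives $\partial\IC_{f,\gamma}(\xi)\neq\emptyset$ in $(L^\infty)^*=ba$ for every $\xi$, which is all that (3) $\Rightarrow$ (7) needs.
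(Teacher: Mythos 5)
Your proposal is correct and takes essentially the same route as the paper: the paper likewise identifies the $\sigma$-additive subgradients $\partial\IC_{f,\gamma}(\xi)\cap L^1$ with the maximizers in (\ref{eq:DualRep1}) (so the corollary's item is equivalent to item (7) of Theorem~\ref{thm:LebMaxCompact} together with the inclusion $\partial\IC_{f,\gamma}(\xi)\subset L^1$), and uses (3) plus $\partial\IC_{f,\gamma}(\xi)\subset\dom(\IC_{f,\gamma}^*)$ to obtain that inclusion. The only cosmetic difference is that you derive nonemptiness of the subdifferential from norm-continuity of the everywhere-finite convex function $\IC_{f,\gamma}$ on $L^\infty$, whereas the paper reads it off directly from the attainment statement in item (7); both are valid.
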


The weak compactness of the sublevels of $\mathcal{H}_{f^*,\gamma}$
can be viewed as a generalization of the \emph{de la Vallée-Poussin
  theorem} which asserts that a set $\mathcal{C}\subset L^1$ is
uniformly integrable if and only if there exists a function
$g:\mathbb{R}\rightarrow(-\infty,\infty]$ which is \emph{coercive}:
$\lim_{|y|\rightarrow \infty}g(y)/y=\infty$ and $\sup_{\eta\in
  \mathcal{C}}\mathbb{E}[g(|\eta|)]
=\sup_{\eta\in\mathcal{C}}\mathcal{H}_{g,\delta_{\{\mathbb{P}\}}}(|\eta|)
<\infty$ (e.g. \citep[Th.~II.22]{dellacherie_meyer78}).  The
\emph{coercivity condition} is equivalent to saying that
$\mathrm{dom}(g^*) =\mathbb{R}$. Now we have as a consequence of (ii)
$\Leftrightarrow$ (iv):

\begin{corollary}[cf. \citep{MR2247836} when $g$ is non-random]
  \label{cor:delaVallee}
  A set $\mathcal{C}\subset L^1$ is uniformly integrable if and only
  if there exists a convex penalty function $\gamma$ on $\mathcal{Q}$
  satisfying Assumption~\ref{as:PenaltyFunc} as well as a proper
  normal convex integrand $g$ with
  $g^*(\cdot,x)\in M^{\rho_\gamma}_u$, $\forall x\in\mathbb{R}$, such
  that
  $\sup_{\eta\in\mathcal{C}}\mathcal{H}_{g,\gamma}(\eta) <\infty$.
\end{corollary}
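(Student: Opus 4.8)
The plan is to establish both directions by reducing everything to the equivalence (2)$\Leftrightarrow$(4) in Theorem~\ref{thm:LebMaxCompact}, after a suitable choice of $\gamma$ and $g$.

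For the ``only if'' direction, suppose $\CC\subset L^1$ is uniformly integrable; I would first assume $\CC\subset \{\eta\in L^1:\,\|\eta\|_1\leq 1\}$ after scaling (uniform integrability forces $\CC$ to be $L^1$-bounded, and scaling by a constant affects nothing essential). I would then invoke the classical de~la~Vall\'ee-Poussin theorem to produce a nonnegative, even, convex, nondecreasing coercive function $G:\RB\to\RB_+$ with $G(0)=0$, $\lim_{|y|\to\infty}G(y)/|y|=\infty$, and $\sup_{\eta\in\CC}\EB[G(|\eta|)]=:c<\infty$. The coercivity $\lim_{|y|\to\infty}G(|y|)/|y|=\infty$ means exactly that the conjugate $g:=G^*$ (non-random) is finite-valued on all of $\RB$, i.e. $\dom g=\RB$; moreover $g$ is a proper convex lsc function with $g^*=G$ (by Fenchel-Moreau, since $G$ is itself convex lsc). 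Now take $\gamma:=\gamma_\PB=\delta_{\{\PB\}}$ (the classical penalty of Example~\ref{ex:Classical}), so that $\rho_\gamma(\xi)=\EB[\xi]$, $M^{\rho_\gamma}_u=M^{\rho_\gamma}=L^{\rho_\gamma}=L^1$, and $\HC_{g,\gamma}(\eta)=I_{g,\PB}(\eta)=\EB[g(\eta)]$. Since $g$ is finite-valued and non-random, $g(\cdot,x)^+\in L^1=M^{\rho_\gamma}_u$ for all $x$, and likewise $g^*(\cdot,x)=G(x)<\infty$ gives the remaining integrability hypotheses (\ref{eq:IntegRobust1}), (\ref{eq:AsConjInteg1}) and (\ref{eq:IntegrabilityD}) trivially; also $g^*(\cdot,x)\in M^{\rho_\gamma}_u$ for all $x$ as required in the statement. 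Finally $\sup_{\eta\in\CC}\HC_{g,\gamma}(\eta)=\sup_{\eta\in\CC}\EB[g(\eta)]$. The only subtlety is that $\EB[g(\eta)]$ need not literally equal $\EB[G(|\eta|)]$ since $g\ne G$ in general; but by Young's inequality $g(y)\le g(y)$ trivially, and one checks $g=G^*$ satisfies $g(y)\le |y|\cdot 1 + G^{**}(1)\cdot$\dots — actually the cleanest route is to \emph{just take} $g$ itself to be the even convex function $y\mapsto$ the largest convex minorant making $\HC_{g,\gamma}(\eta)=\EB[G(|\eta|)]$; concretely, set $g:=G$ directly as the normal integrand (it is already convex, lsc, proper, finite-valued) — then $\HC_{g,\gamma}(\eta)=\EB[G(\eta)]\le \EB[G(|\eta|)]$ (as $G$ is even and nondecreasing on $\RB_+$ this is an equality only on $\eta\ge 0$, but $\le$ suffices) so $\sup_{\eta\in\CC}\HC_{g,\gamma}(\eta)\le c<\infty$, and $g^*=G^*$ has full domain by coercivity, so $g^*(\cdot,x)\in\RB=M^{\rho_\gamma}_u$ for all $x$. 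This gives a valid witness pair $(\gamma,g)$.

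For the ``if'' direction, suppose such $\gamma$ and $g$ exist with $\sup_{\eta\in\CC}\HC_{g,\gamma}(\eta)=:c<\infty$ and $g^*(\cdot,x)\in M^{\rho_\gamma}_u$ for all $x\in\RB$. The hypothesis $g^*(\cdot,x)\in M^{\rho_\gamma}_u$, $\forall x$ is precisely condition (2) of Theorem~\ref{thm:LebMaxCompact} \emph{applied to the integrand $g^*$ in place of $f$} (note $(g^*)^*=g^{**}=g$ since $g$ is proper convex lsc, so the roles of $f$ and $f^*$ swap consistently, and $\HC_{(g^*)^*,\gamma}=\HC_{g,\gamma}$); one needs to check the standing assumptions of Theorem~\ref{thm:RobustRockafellar} hold for $g^*$, which follow from $g^*(\cdot,x)\in M^{\rho_\gamma}_u\subset M^{\rho_\gamma}\subset L^{\rho_\gamma}$ at a single $x$ (giving (\ref{eq:AsConjInteg1}) with $f\rightsquigarrow g^*$, i.e. $g(\cdot,\eta_0)^+\in L^{\rho_\gamma}$ — here one may need $g$ finite at some point, which holds since $g$ is proper) together with (\ref{eq:IntegrabilityD}) which is exactly $g^*(\cdot,\xi_0)^+\in M^{\rho_\gamma}_u$, available by hypothesis. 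Then (2)$\Rightarrow$(4) of Theorem~\ref{thm:LebMaxCompact} yields that $\{\eta\in L^1:\,\HC_{g,\gamma}(\eta)\le c\}$ is $\sigma(L^1,L^\infty)$-compact, equivalently uniformly integrable (and closed). Since $\CC$ is contained in this sublevel set, $\CC$ is uniformly integrable, completing the argument.

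The main obstacle I anticipate is bookkeeping of the standing hypotheses when transporting Theorem~\ref{thm:LebMaxCompact} from $f$ to $g^*$: one must verify that a single-point integrability of $g^*$ suffices for (\ref{eq:AsConjInteg1})-with-$g^*$ and that $\dom(\IC_{g^*,\gamma})=L^\infty$ is \emph{not} needed for the implication (2)$\Rightarrow$(4) — indeed the theorem's final sentence states (1)$\Leftrightarrow$(2)$\Rightarrow$(3)$\Rightarrow$(4) holds without (\ref{eq:AddAssNegPart}), but (4) there is stated under $\dom(\IC_{f,\gamma})=L^\infty$; one should instead argue that condition (2) for $g^*$ forces $g^*(\cdot,\xi)^+\in M^{\rho_\gamma}_u$ for all $\xi\in L^\infty$ (this is (1), which follows from (2) via the solidity of $M^{\rho_\gamma}_u$ and boundedness of $\xi$), hence in particular $\dom(\IC_{g^*,\gamma})=L^\infty$ holds automatically, so the hypotheses of Theorem~\ref{thm:LebMaxCompact} are met. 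On the ``only if'' side, the only care needed is the harmless replacement of $G$ by itself (or its even convex hull) as a normal integrand and confirming coercivity of $G$ is equivalent to $\dom(G^*)=\RB$, which is the standard Legendre-transform computation.
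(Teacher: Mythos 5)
Your proposal is correct and takes essentially the same route as the paper: for necessity, the classical de~la~Vall\'ee-Poussin theorem with $\gamma=\delta_{\{\PB\}}$ (so $M^{\rho_\gamma}_u=L^1$ and coercivity of $G$ is exactly $\dom(G^*)=\RB$), and for sufficiency, setting $f=g^*$ so that $f^*=g^{**}=g$ and invoking (2) $\Rightarrow$ (4) of Theorem~\ref{thm:LebMaxCompact}, noting that (2) forces $\DC_{f,\gamma}=\dom(\IC_{f,\gamma})=L^\infty$. The only cosmetic difference is in how (\ref{eq:AsConjInteg1}) for $f=g^*$ is justified --- the paper extracts it from $\sup_{\eta\in\CC}\HC_{g,\gamma}(\eta)<\infty$ while you appeal to properness of $g$ (which, for a random integrand, really requires a measurable selection, e.g.\ of $\partial g^*(\cdot,0)$) --- but this does not affect the substance of the argument.
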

\begin{proof}
  Let $f=g^*$, then $f^*=g^{**}=g$ (since normal). Then
  $\mathcal{D}_{f,\gamma} =L^\infty$ by assumption and
  $\mathcal{H}_{g,\gamma} =\mathcal{H}_{f^*,\gamma}$ is well-defined
  while $\sup_{\eta\in\mathcal{C}}\mathcal{H}_{g,\gamma}(\eta)<\infty$
  guarantees (\ref{eq:AsConjInteg1}) as well. Now the sufficiency is
  nothing but (ii) $\Rightarrow$ (iv), while the necessity is clear from
  the above paragraph.
\end{proof}

\subsection{Examples of ``Nice'' Integrands and Robust Utility
  Maximization}
\label{sec:ExNiceInt}

When $f$ is \emph{non-random} and finite, $\mathbb{R}\subset
\mathcal{D}_{f,\gamma}$ is automatic, while $f^*(y)\in
M^{\rho_\gamma}_u$ for any $y\in\mathrm{dom}f^*\neq \emptyset$ (since
constant).  Here are some ways to generate ``nice'' random integrands.

\begin{example}[Random scaling]
  \label{ex:IntegrandDiscount}
  Let $g:\mathbb{R}\rightarrow\mathbb{R}$ be a (non-random) finite
  convex function $\not\equiv0$, and $W\in L^0$ be strictly positive
  (i.e., $\mathbb{P}(W>0)=1$). Then put
  \begin{equation*}
    f(\omega,x) :=g(W(\omega)x), \quad \forall (\omega,x)\in \Omega\times\mathbb{R}.
  \end{equation*}
  In this case, $f^*(\omega,y) =g^*(y/W(\omega))$ and
  $\mathbb{R}\subset \mathcal{D}_{f,\gamma}$ is true if
  \begin{equation}
    \label{eq:ScalingCond1}
    \exists \delta>0,\, p>1\text{ such that } g(-\delta W^p)^+\vee g(\delta W^p)^+\in M^{\rho_\gamma}_u.
  \end{equation}
  Note that
  $|Wx|=\frac\delta2|W(2 x/\delta)| \leq \frac12\left(\delta
    W^p+\frac{2^{q}}{\delta^{q-1}}|x|^q\right)$
  where $\frac1p+\frac1q=1$. Applying the (quasi) convexity of $g$
  twice, (\ref{eq:ScalingCond1}) implies for each $x\in\mathbb{R}$
  \begin{align*}
    g(Wx)\leq g(-\delta W^p)^+ \vee g(\delta W^p)^+\vee
    g\left(-\frac{2^{q}}{\delta^{q-1}}|x|\right)^+ \vee
    g\left(\frac{2^{q}}{\delta^{q-1}}|x|\right)^+ \in
    M^{\rho_\gamma}_u.
  \end{align*}
  Also, since $g\not\equiv0$, $\mathrm{dom} g^*\setminus
  \{0\}\neq\emptyset$. If $y\in \mathrm{dom} g^*$ and $y>0$
  (resp. $y<0$),
  \begin{align*}
    0\leq W\leq 1+W^p\leq 1+ \frac{g(\delta W^p)^+
      +g^*(y)}{y\delta};\quad\text{resp. } \leq 1+ \frac{g(-\delta
      W^p)^+ +g^*(y)}{-y\delta}.
  \end{align*}
  In both cases, (\ref{eq:ScalingCond1}) implies $W\in
  M^{\rho_\gamma}$, and consequently, $\eta_y =yW\in M^{\rho_\gamma}$
  and $f^*(\cdot, \eta_y)=g^*(y)\in L^\infty$. Thus
  (\ref{eq:EtaMrho1}) ($\Rightarrow$ (\ref{eq:AsConjInteg1})) follows
  from (\ref{eq:ScalingCond1}) as well.  If in addition $g$ is
  monotone increasing, $g(-W^p)^+\leq g(0)^+$, thus the half of
  (\ref{eq:ScalingCond1}) is automatically true.
\end{example}

\begin{example}[Random parallel shift]
  \label{ex:IntegTrans1}
  Let $f$ be a finite normal convex integrand satisfying
  (\ref{eq:AsConjInteg1}) and $\mathbb{R}\subset \mathcal{D}_f$, and
  $B\in L^0$. Then put
  \begin{equation}
    \label{eq:IngegFB}
    f_B(\omega,x)=f(\omega,x+B(\omega)),
    \quad (\omega,x)\in \Omega\times\mathbb{R}.
  \end{equation}
  By convexity of $f$, $f(\cdot,x+B)\leq
  \frac{\varepsilon}{1+\varepsilon} f\left(\cdot,
    \frac{1+\varepsilon}\varepsilon x\right) +\frac1{1+\varepsilon}
  f(\cdot, (1+\varepsilon)B)$ and $f\left(\cdot,
    \frac\varepsilon{1+\varepsilon}x\right)\leq
  \frac\varepsilon{1+\varepsilon} f(\cdot, x+B) +\frac1{1+\varepsilon}
  f(\cdot, -\varepsilon B)$, thus putting $\Gamma_\alpha(x) =f(\cdot,
  \alpha x)^+/\alpha$,
  \begin{align}
    \label{eq:FBEstim1}
    \frac{1+\varepsilon}{\varepsilon} f\left(\cdot,
      \frac{\varepsilon}{1+\varepsilon}x\right) -\Gamma_\varepsilon
    (-B)&\leq f_B(\cdot,x) \leq \frac\varepsilon{1+\varepsilon}
    f\left(\cdot, \frac{1+\varepsilon}\varepsilon x\right)
    +\Gamma_{1+\varepsilon} (B),\\
       \label{eq:FBEstim2}
       f^*(\cdot, y)-\Gamma_{1+\varepsilon}(B)&\leq f_B^*(\cdot, y)
       \leq f^*(\cdot, y)+\Gamma_{\varepsilon}(-B),
  \end{align}
  where $f^*_B(\cdot,y) =f^*(\cdot,y)-yB$, and (\ref{eq:FBEstim2})
  follows from (\ref{eq:FBEstim1}) by taking conjugates.  Thus if
  \begin{align}
    \label{eq:IntegCondFB1}
    &\exists \varepsilon>0\text{ such that } \Gamma_{1+\varepsilon}(B)
    \in M^{\rho_\gamma}_u \text{ and } \Gamma_\varepsilon(-B)\in
    L^{\rho_\gamma},
  \end{align}
  then $\mathbb{R}\subset \mathcal{D}_{f_B,\gamma}$ and $f_B^*$
  satisfies (\ref{eq:AsConjInteg1}) ((\ref{eq:EtaMrho1}) if $f^*$
  does). Moreover, (\ref{eq:FBEstim2}) implies in this case
  \begin{equation*}
    \mathcal{H}_{f^*_B,\gamma}(\eta)<\infty
    \,\Leftrightarrow\, \mathcal{H}_{f^*,\gamma}(\eta)<\infty\,
    \Rightarrow\, \eta B\in L^1,
  \end{equation*}
  and $\mathcal{H}_{f^*_B,\gamma}$ is explicitly given in terms of
  $\mathcal{H}_{f^*,\gamma}$ as
  \begin{equation}
    \label{eq:FBConj}
    \mathcal{H}_{f^*_B,\gamma}(\eta)=
    \mathcal{H}_{f^*,\gamma}(\eta)-\mathbb{E}[\eta B],
    \quad \forall \eta\in \mathrm{dom}(\mathcal{H}_{f^*_B,\gamma}) 
    =\mathrm{dom}(\mathcal{H}_{f^*,\gamma}).
  \end{equation}
\end{example}

We can combine the preceding two examples:
\begin{example}
  \label{ex:FBW}
  Let $g:\mathbb{R}\rightarrow\mathbb{R}$, $W$ and $B$ be as in
  Examples~\ref{ex:IntegrandDiscount}~and~\ref{ex:IntegTrans1}, and
  put
  \begin{equation*}
    h(\cdot,x):=g(Wx+B)=g(W(x+B/W))
  \end{equation*}
  This $h$ satisfies (\ref{eq:AsConjInteg1}) and
  $\mathbb{R}\subset\mathcal{D}_{h,\gamma}$ if $(g,W)$ satisfies
  (\ref{eq:ScalingCond1}) and (\ref{eq:IntegCondFB1}) holds with
  $f=g$.  Note that if we apply Example~\ref{ex:IntegTrans1} to
  $f(\cdot,x)=g(Wx)$ and $B/W$, then $h(\cdot, x) =f(\cdot,x+ B/W)$
  and e.g.  $f(\cdot, (1+\varepsilon)B/W) =g((1+\varepsilon)B)$.
\end{example}

Our initial motivation was a duality method for \emph{robust utility
  maximization} of the general form (\ref{eq:RobUtilIntro1}) with
\emph{random utility function} $U:\Omega\times\mathbb{R}
\rightarrow\overline{\mathbb{R}}$.  See \cite{follmer_schied_weber09}
for the financial background of the problem.  A motivational example
of random utility is of the type $U_{D,B}(\cdot,x)=U(D^{-1}x+B)$ where
$U:\mathbb{R}\rightarrow\overline{\mathbb{R}}$ is a proper concave
increasing function and $D,B\in L^0$ (with $D>0$ a.s.)  correspond
respectively to the discount factor and a payoff of a claim. Then the
problem is to
\begin{equation}
  \label{eq:RobUtilAbst1}
  \text{maximize}\quad 
  u_{D,B,\gamma}(\xi) 
  :=\inf_{Q\in\mathcal{Q}} 
  \left(\mathbb{E}_Q[U(D^{-1}\xi+B)]+\gamma(Q)\right) 
  =-\mathcal{I}_{f_{D,B},\gamma}(-\xi)
\end{equation}
over a convex cone $\mathcal{C}\subset L^\infty$ where $f_{D,B}(\cdot,
x)=-U(-D^{-1}x+B)$ is a proper normal convex integrand of the form in
Example~\ref{ex:FBW}. The full detail of this problem in more concrete
financial setup will be given in a separate future paper together with
an application to a robust version of \emph{utility indifference
  valuation}.  Here we just give a criterion in terms of
``integrabilities'' of $D$ and $B$ for the duality without singular
term as well as its explicit form \emph{when $U$ is finite on
  $\mathbb{R}$}. It constitutes a half of what we call the martingale
duality method (see e.g.  \cite{MR1883783,MR2014244,MR2830428} for the
other half in the classical case and
\citep{MR2966354}\footnote{\unskip There an earlier version of this
  paper (still available as \href{http://arxiv.org/abs/arXiv:1101.2968}{arXiv:1101.2968}) was used.} for a partial
result in the robust case). For the case $\mathrm{dom}(U)
=\mathbb{R}_+$, see \citep{MR2284014} when $D,B$ are constants;
\citep{wittmuss08} with bounded $B$, and \citep{MR2247836} for
$\mathrm{dom}(U) =\mathbb{R}$ with constant $D,B$; see also
\citep{follmer_schied_weber09} for more thorough references. The
following is an immediate consequence of
Corollary~\ref{cor:FenchelDuality} and Example~\ref{ex:FBW}.

\begin{corollary}
  \label{cor:DualityUtil1}
  In the above notation, suppose $U$ is finite on $\mathbb{R}$, and
  \begin{equation}
    \label{eq:RobUtilSuffCond}
    \exists \delta,\varepsilon>0\text{ with }  
    U\bigl(-\delta D^{-(1+\varepsilon)}\bigr)^- 
    \in M^{\rho_\gamma}_u,\, 
    U(-(1+\varepsilon)B)^-\in M^{\rho_\gamma}_u,\,  
    U(\varepsilon B)^-\in L^{\rho_\gamma}.
  \end{equation}
  Then for any nonempty convex cone $\mathcal{C}\subset L^\infty$, it
  holds that
  \begin{align}
    \label{eq:RobUtilDuality1}
    \sup_{\xi\in \mathcal{C}}u_{B,D,\gamma}(\xi) = \min_{\eta\in
      \mathcal{C}_V^\circ} \left(\mathcal{H}_{V,\gamma}(D\eta)
      +\mathbb{E}[D B\eta]\right),
  \end{align}
  where $V(y):=\sup_{x\in \mathbb{R}}(U(x)-xy)$ and
  $\mathcal{C}^\circ_V
  :=\{\eta\in\mathcal{C}^\circ:\,\mathcal{H}_{V,\gamma}(D\eta)<\infty\}$.
\end{corollary}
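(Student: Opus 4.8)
The plan is to recognise $f_{D,B}$ as an instance of the integrand of Example~\ref{ex:FBW} and then apply the cone form of Corollary~\ref{cor:FenchelDuality} to $-\CC$. Put $\varphi(s):=-U(-s)$; since $U$ is finite, concave, increasing and non-constant, $\varphi$ is a finite convex function on $\RB$ with $\varphi\not\equiv0$, increasing, and with convex conjugate $\varphi^{*}=V$. Because $-D^{-1}x+B=-(D^{-1}x-B)$,
\[
  f_{D,B}(\cdot,x)=\varphi\!\left(D^{-1}x-B\right)=\varphi\!\left(D^{-1}(x-DB)\right),
\]
so $f_{D,B}$ is the integrand $h$ of Example~\ref{ex:FBW} for $g=\varphi$, $W=D^{-1}$ and (the Example's) shift $-B$; equivalently it is the parallel shift by $-DB$, in the sense of Example~\ref{ex:IntegTrans1}, of the random scaling $f^{(0)}(\cdot,x):=\varphi(D^{-1}x)$ of Example~\ref{ex:IntegrandDiscount}.

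First I would verify that \eqref{eq:RobUtilSuffCond} is precisely condition \eqref{eq:ScalingCond1} for $(g,W)=(\varphi,D^{-1})$ together with \eqref{eq:IntegCondFB1} for $f=\varphi$. After substituting $\varphi=-U(-\cdot)$, these amount to four memberships of the form $U(\cdot)^{-}\in M^{\rho_\gamma}_u$ or $L^{\rho_\gamma}$, one of which (the ``$+\delta W^{p}$'' branch of \eqref{eq:ScalingCond1}, where the argument of $U$ is the nonnegative random variable $\delta D^{-p}$) is automatic since $U$ is increasing and bounded below by $U(0)$ there; the remaining three are exactly \eqref{eq:RobUtilSuffCond}. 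Hence Example~\ref{ex:FBW} applies to $f_{D,B}$ and yields $\RB\subset\DC_{f_{D,B},\gamma}$ — whence $\dom(\IC_{f_{D,B},\gamma})=L^{\infty}$ by convexity of $f_{D,B}(\omega,\cdot)$, and \eqref{eq:IntegrabilityD} holds with $\xi_{0}=0$ — as well as \eqref{eq:AsConjInteg1}. These are exactly the hypotheses of Corollary~\ref{cor:FenchelDuality}. Applying its cone form to $-\CC$, using $(-\CC)^{\circ}=-\CC^{\circ}$ (valid since $\CC$ is a cone), the substitution $\eta\mapsto-\eta$, and the identity $u_{D,B,\gamma}=-\IC_{f_{D,B},\gamma}(-\,\cdot\,)$ of \eqref{eq:RobUtilAbst1} (this $u_{D,B,\gamma}$ being the $u_{B,D,\gamma}$ of the statement),
\[
  \sup_{\xi\in\CC}u_{D,B,\gamma}(\xi)=-\inf_{\zeta\in-\CC}\IC_{f_{D,B},\gamma}(\zeta)=\min_{\eta\in\CC^{\circ}}\HC_{f^{*}_{D,B},\gamma}(\eta).
\]

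It remains to compute $\HC_{f^{*}_{D,B},\gamma}$. Changing variable $z=-D^{-1}x+B$ in the supremum defining the conjugate gives $f^{*}_{D,B}(\cdot,y)=V(Dy)+y\,DB$; equivalently $f^{(0)*}(\cdot,y)=\varphi^{*}(y/W)=V(Dy)$ by Example~\ref{ex:IntegrandDiscount} and $f^{*}_{D,B}(\cdot,y)=f^{(0)*}(\cdot,y)+y\,DB$ by Example~\ref{ex:IntegTrans1} (shift $-DB$). Inspecting \eqref{eq:ConjTilde3}, the three-variable integrand $\tilde f^{*}$ associated to $f^{(0)}$ is obtained from the one associated to $\varphi$ by scaling the first argument by $D$ (for $z>0$ both equal $zV(Dy/z)$; for $z=0$ both vanish at $y=0$ and are $+\infty$ otherwise, as $\varphi$ is finite), hence $\HC_{f^{(0)*}}(\eta\,|\,Q)=\HC_{V}(D\eta\,|\,Q)$ for every $Q\in\QC_{\gamma}$ and so $\HC_{f^{(0)*},\gamma}(\eta)=\HC_{V,\gamma}(D\eta)$. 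Inserting this into \eqref{eq:FBConj} gives
\[
  \HC_{f^{*}_{D,B},\gamma}(\eta)=\HC_{V,\gamma}(D\eta)+\EB[DB\eta]\quad\text{on}\quad\dom(\HC_{f^{*}_{D,B},\gamma})=\{\eta:\HC_{V,\gamma}(D\eta)<\infty\};
\]
since the minimand in the previous display is $+\infty$ on $\CC^{\circ}\setminus\CC^{\circ}_{V}$ while $\CC^{\circ}_{V}=\CC^{\circ}\cap\dom(\HC_{f^{*}_{D,B},\gamma})$, the two minima coincide, and attainment is inherited from Corollary~\ref{cor:FenchelDuality}. This is \eqref{eq:RobUtilDuality1}.

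The argument is thus a concatenation of quoted results, and the only real work is bookkeeping: (i) matching the signs in \eqref{eq:RobUtilSuffCond} to \eqref{eq:ScalingCond1}–\eqref{eq:IntegCondFB1} for the data $g=-U(-\cdot)$, $W=D^{-1}$, shift $-B$, and discarding the half that monotonicity of $U$ makes automatic; (ii) the random-scaling identity $\HC_{f^{(0)*},\gamma}(\eta)=\HC_{V,\gamma}(D\eta)$, which is not stated verbatim in Examples~\ref{ex:IntegrandDiscount}–\ref{ex:FBW} but is immediate from \eqref{eq:ConjTilde3}; and (iii) the polar identity $(-\CC)^{\circ}=-\CC^{\circ}$ together with the sign flip in \eqref{eq:RobUtilAbst1}. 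One must also check that all standing hypotheses of Theorem~\ref{thm:RobustRockafellar}/Theorem~\ref{thm:LebMaxCompact} are in force and that $\dom(\IC_{f_{D,B},\gamma})=L^{\infty}$; these all follow from \eqref{eq:RobUtilSuffCond} via Example~\ref{ex:FBW}.
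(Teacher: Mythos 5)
Your route is exactly the paper's: the text offers no proof of this corollary beyond the remark that it ``is an immediate consequence of Corollary~\ref{cor:FenchelDuality} and Example~\ref{ex:FBW}'', and your reduction --- $g=\varphi:=-U(-\cdot)$, $W=D^{-1}$, shift $-B$, the cone form of Corollary~\ref{cor:FenchelDuality} applied to $-\CC$, and the computation $\HC_{f_{D,B}^*,\gamma}(\eta)=\HC_{V,\gamma}(D\eta)+\EB[DB\eta]$ via (\ref{eq:ConjTilde3}) and (\ref{eq:FBConj}) --- is the intended one. The polar identity, the scaling identity $\HC_{f^{(0)*},\gamma}(\eta)=\HC_{V,\gamma}(D\eta)$, the automatic half of (\ref{eq:ScalingCond1}) by monotonicity of $U$, and the restriction of the minimum to $\CC^\circ_V$ all check out.

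The gap is at the one step you dismiss as bookkeeping: the claim that (\ref{eq:ScalingCond1}) for $(\varphi,D^{-1})$ together with (\ref{eq:IntegCondFB1}) for $f=\varphi$ and shift $-B$ ``are exactly (\ref{eq:RobUtilSuffCond})''. Carrying out the substitution, (\ref{eq:IntegCondFB1}) with the Example's shift equal to $-B$ requires $\varphi(-(1+\varepsilon)B)^+=U((1+\varepsilon)B)^-\in M^{\rho_\gamma}_u$ and $\varphi(\varepsilon B)^+=U(-\varepsilon B)^-\in L^{\rho_\gamma}$, whereas (\ref{eq:RobUtilSuffCond}) as displayed demands $U(-(1+\varepsilon)B)^-\in M^{\rho_\gamma}_u$ and $U(\varepsilon B)^-\in L^{\rho_\gamma}$: the signs of $B$ are opposite, and the two sets of conditions are not equivalent. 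Since $U$ is increasing, $U(-(1+\varepsilon)B)^-$ constrains $U$ only on $\{B>0\}$, while what is actually needed already for $0\in\DC_{f_{D,B},\gamma}$, namely $U(B)^-\in M^{\rho_\gamma}_u$ (via $U(B)\geq\tfrac{\varepsilon}{1+\varepsilon}U(0)+\tfrac1{1+\varepsilon}U((1+\varepsilon)B)$), is control of $U$ where $B$ is very negative. So the hypotheses as displayed do not yield the hypotheses of Corollary~\ref{cor:FenchelDuality} by the argument you give; the corollary holds with the last two memberships of (\ref{eq:RobUtilSuffCond}) read with $B$ replaced by $-B$ (this appears to be a sign slip in the statement itself), and your proof silently asserts a match that is not there. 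You should either record the corrected form of (\ref{eq:RobUtilSuffCond}) or exhibit the substitution explicitly so the discrepancy is visible rather than hidden behind ``exactly''.
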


Here $\mathcal{C}$ can be any convex cone and the possibility of both
sides being $\infty$ is not excluded; it does not happen iff
$\mathcal{C}^\circ_V\neq \emptyset$.  If in addition
$\mathcal{C}^{\circ,e}_V :=\{\eta\in\mathcal{C}^\circ_V:\,\eta>0
\text{ a.s.}\} \neq \emptyset$, we can replace ``$\min_{\eta\in
  \mathcal{C}^\circ_V}$'' by
``$\inf_{\eta\in\mathcal{C}^{\circ,e}_V}$'' etc with a little more
effort and certain regularities of $U$. Choosing a ``good'' cone
$\mathcal{C}$, these conditions as well as the dual problem have clear
financial interpretations and consequences (see \citep{MR1883783} for
a good exposition in the classical case).

A couple of features deserve attention: (i) We directly invoke
Fenchel's theorem to the functional $u_{D,B,\gamma}
=-\mathcal{I}_{f_{D,B},\gamma}(-\cdot)$ by means of our main theorem,
instead of interchanging ``$\sup_{\xi\in\mathcal{C}}$'' and
``$\inf_{Q\in\mathcal{Q}}$'', and invoke a \emph{classical} duality
(see \citep{MR1883783,MR2830428} and its references) under each $Q$,
so we do not need to mind what happens under ``extreme''
$Q\in\mathcal{Q}_\gamma$; (ii) embedding the randomness $D,B$ to the
utility function $U$ instead of transforming the domain $\mathcal{C}$
to $D^{-1}\mathcal{C}+B$, we retain the ``good form'' of $\mathcal{C}$
(which is essential for the probabilistic techniques to work well),
and obtain a criterion for the duality in terms solely of $B$ and $D$
(when $U$ is finite). Those integrability conditions are weak even in
the classical case where $\gamma =\delta_{\{\mathbb{P}\}}$ and
$D\equiv 1$ (then (\ref{eq:RobUtilSuffCond}) reads as
$U(-(1+\varepsilon)B)^-, U(\varepsilon B)^-\in L^1$ for \emph{some}
$\varepsilon>0$ complementing the result of \citep{MR2830428}).

\section{Proofs}
\label{sec:Proofs}

\subsection{Proof of Theorem~\ref{thm:RobustRockafellar}}
\label{sec:ProofIntRepMain}

The upper bound is simply a consequence of Young's inequality
(\ref{eq:YoungIneqRob}) and $\nu=\nu_r+\nu_s$:
\begin{align*}
  \mathcal{I}_{f,\gamma}^*(\nu) &
  =\sup_{\xi\in\mathrm{dom}(\mathcal{I}_{f,\gamma})}
  \left(\nu_r(\xi)-\mathcal{I}_{f,\gamma}(\xi)+\nu_s(\xi)\right)
  \stackrel{\text{(\ref{eq:YoungIneqRob})}}{\leq}
  \mathcal{H}_{f^*,\gamma}(\nu_r)
  +\sup_{\xi\in\mathrm{dom}(\mathcal{I}_{f,\gamma})}\nu_s(\xi).
\end{align*}

The lower bound is more involved. First, fix $\nu\in ba$ and define
\begin{align*}
  L_\nu(Q,\xi) :=\nu(\xi)- \mathbb{E}_Q[f(\cdot, \xi)]+\gamma(Q),\quad
  Q\in \mathcal{Q}_\gamma,\, \xi\in\mathcal{D}_{f,\gamma}.
\end{align*}
It is finite-valued on
$\mathcal{Q}_\gamma\times\mathcal{D}_{f,\gamma}$, convex in
$Q\in\mathcal{Q}_\gamma$ and concave in $\xi\in
\mathcal{D}_{f,\gamma}$.  Moreover,
\begin{lemma}
  \label{lem:ProofRockafellarMinimax1}
  With the notation above, it holds that
  \begin{align}
    \label{eq:ProofMainMinimax1}
    \inf_{Q\in\mathcal{Q}_\gamma}
    \sup_{\xi\in\mathcal{D}_{f,\gamma}}L_\nu(Q,\xi)
    =\sup_{\xi\in\mathcal{D}_{f,\gamma}}
    \inf_{Q\in\mathcal{Q}_\gamma}L_\nu(Q,\xi).
  \end{align}
\end{lemma}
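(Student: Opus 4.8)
The plan is to establish the minimax identity \eqref{eq:ProofMainMinimax1} via an infinite-dimensional minimax theorem, for which the natural candidate is Sion's theorem (or one of its lopsided variants that only require compactness and semicontinuity on one side). The inequality ``$\geq$'' is automatic, so the real content is ``$\leq$''. First I would equip $\QC_\gamma$ with the relative $\sigma(L^1,L^\infty)$-topology and $\DC_{f,\gamma}$ with any convenient convex structure (no topology is needed on that side if the one-sided version is used). On $\QC_\gamma$ we already have convexity of $Q\mapsto L_\nu(Q,\xi)$ (affine part $\nu(\xi)$ plus $-\EB_Q[f(\cdot,\xi)]$, which is affine in $Q$, plus the convex $\gamma$), and concavity (in fact affinity) of $\xi\mapsto L_\nu(Q,\xi)$ on the convex set $\DC_{f,\gamma}$. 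So the hypotheses of Sion's theorem reduce to (i) lower semicontinuity of $Q\mapsto L_\nu(Q,\xi)$ for $\sigma(L^1,L^\infty)$ and (ii) compactness of a suitable sublevel set of $Q\mapsto \sup_{\xi\in\DC_{f,\gamma}}L_\nu(Q,\xi)$.

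For (i), the only nontrivial term is $Q\mapsto -\EB_Q[f(\cdot,\xi)]$, i.e.\ I need $Q\mapsto \EB_Q[f(\cdot,\xi)]$ to be weakly upper semicontinuous for each fixed $\xi\in\DC_{f,\gamma}$; combined with weak lower semicontinuity of $\gamma$ from \eqref{eq:PenaltyFuncCompact} (Remark~\ref{rem:PenaltyFuncAssump}) this gives weak lower semicontinuity of $L_\nu(\cdot,\xi)$. The key here is precisely the role of $M^{\rho_\gamma}_u$: for $\xi\in\DC_{f,\gamma}$ we have $f(\cdot,\xi)^+\in M^{\rho_\gamma}_u$, so by the uniform integrability characterization \eqref{eq:UIOrliczIFF}, the family $\{f(\cdot,\xi)^+\,dQ/d\PB:\gamma(Q)\le c\}$ is uniformly integrable, whence $Q\mapsto \EB_Q[f(\cdot,\xi)^+]$ is weakly continuous on each sublevel $\{\gamma\le c\}$; meanwhile $f(\cdot,\xi)^-\in L^{\rho_\gamma}\subset\bigcap_Q L^1(Q)$ (from \eqref{eq:AsConjInteg1}, as noted after it), and $Q\mapsto\EB_Q[f(\cdot,\xi)^-]$ is weakly lower semicontinuous by Fatou, so $Q\mapsto\EB_Q[f(\cdot,\xi)]$ is weakly upper semicontinuous on each $\{\gamma\le c\}$. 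To promote this from the sublevels to all of $\QC_\gamma$, I would work throughout on a fixed sublevel: pick any $\xi_0\in\DC_{f,\gamma}$ (nonempty by \eqref{eq:IntegrabilityD}) and observe that in computing $\inf_{Q}\sup_\xi L_\nu$ we may restrict to $Q$ with $\gamma(Q)\le \sup_\xi L_\nu(Q,\xi_0)+\text{const}$ bounded, i.e.\ to some $\{\gamma\le c\}$, since outside such a set the penalty makes $L_\nu(Q,\xi_0)$, hence the inner sup, arbitrarily large; this is where condition \eqref{eq:PenaltyFuncAssump1} ($\inf\gamma=0$) and properness get used.

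For (ii), having localized to $\KC_c:=\{Q\in\QC_\gamma:\gamma(Q)\le c\}$, this set is $\sigma(L^1,L^\infty)$-compact by \eqref{eq:PenaltyFuncCompact}, and $Q\mapsto\sup_{\xi\in\DC_{f,\gamma}}L_\nu(Q,\xi)$ is weakly lower semicontinuous on $\KC_c$ as a supremum of the weakly lsc maps from step (i); so its sublevel sets in $\KC_c$ are weakly compact. That supplies the compactness hypothesis of Sion's theorem on the space $\KC_c\times\DC_{f,\gamma}$, and I conclude
\[
\inf_{Q\in\KC_c}\sup_{\xi\in\DC_{f,\gamma}}L_\nu(Q,\xi)
=\sup_{\xi\in\DC_{f,\gamma}}\inf_{Q\in\KC_c}L_\nu(Q,\xi)
\le \sup_{\xi\in\DC_{f,\gamma}}\inf_{Q\in\QC_\gamma}L_\nu(Q,\xi),
\]
while the left side equals $\inf_{Q\in\QC_\gamma}\sup_{\xi}L_\nu(Q,\xi)$ by the localization argument; together with the trivial ``$\geq$'' this is \eqref{eq:ProofMainMinimax1}. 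The main obstacle I anticipate is exactly the interplay between the two sides of the localization: I must choose the threshold $c$ uniformly, i.e.\ independently of $\xi$, which forces me to first reduce the outer infimum to a bounded-penalty region using a single test point $\xi_0\in\DC_{f,\gamma}$ \emph{before} invoking compactness, and to check carefully that this reduction does not disturb the value of the inner supremum (it does not, because $L_\nu(Q,\xi)\le L_\nu(Q,\xi_0)+[\text{terms bounded in }Q\text{ on }\KC_c]$ is false in general — instead one argues directly on the already-reduced domain). A secondary technical point is that $\DC_{f,\gamma}$ need not be weak$^*$-closed or bounded in $L^\infty$, but since the one-sided Sion theorem needs no topology on the concavity variable, only its convexity (guaranteed since $f(\cdot,\cdot)^+\in M^{\rho_\gamma}_u$ is preserved under convex combinations by solidity and convexity of $M^{\rho_\gamma}_u$), this causes no real trouble.
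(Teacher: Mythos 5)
Your preparatory work is sound, but the way you invoke the minimax theorem contains a genuine error: the last inequality in your displayed chain goes the wrong way. Restricting the inner infimum to the smaller set $\{Q\in\QC_\gamma:\,\gamma(Q)\le c\}$ can only increase it, so $\sup_{\xi}\inf_{\gamma(Q)\le c}L_\nu(Q,\xi)\ge\sup_{\xi}\inf_{Q\in\QC_\gamma}L_\nu(Q,\xi)$, not $\le$. After the localization, your argument therefore only re-derives the trivial inequality $\inf\sup\ge\sup\inf$ and proves nothing. Nor can the localization be repaired in this form: for fixed $\xi$ one has $L_\nu(Q,\xi)\ge\nu(\xi)-\rho_\gamma\left(2f(\cdot,\xi)^+\right)+\tfrac12\gamma(Q)$, so the level of $\gamma$ beyond which $Q$ becomes irrelevant for $\inf_Q L_\nu(Q,\xi)$ depends on $\xi$ through $\rho_\gamma\left(2f(\cdot,\xi)^+\right)$ and $\nu(\xi)$, which are unbounded over $\DC_{f,\gamma}$; there is no single threshold $c$ that serves simultaneously the outer infimum and all the inner infima. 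You flag exactly this obstacle at the end but do not resolve it.

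The fix is to use a minimax theorem whose compactness hypothesis bears on the \emph{sublevel sets of $L_\nu(\cdot,\xi)$} rather than on the $Q$-domain --- a Moreau/Ky Fan type ``inf-compactness'' statement (this is the theorem cited from \citep[Appendix~A]{MR3165235}), whose proof runs through the finite-intersection property of the compact convex sets $\Lambda_c(\xi):=\{Q\in\QC_\gamma:\, L_\nu(Q,\xi)\le c\}$ and requires no compact $Q$-domain at all. The estimate above gives $\Lambda_c(\xi)\subset\left\{Q:\,\gamma(Q)\le 2\left(c-\nu(\xi)+\rho_\gamma\left(2f(\cdot,\xi)^+\right)\right)\right\}$, which is uniformly integrable by \eqref{eq:PenaltyFuncCompact}; weak closedness of $\Lambda_c(\xi)$ then follows from precisely the upper-semicontinuity argument you already give in step (i) (uniform integrability of $\{f(\cdot,\xi)^+\,dQ/d\PB\}$ via \eqref{eq:UIOrliczIFF} together with the reverse Fatou lemma for the positive part and Fatou for the negative part). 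So all of your analytic ingredients are correct and coincide with the ones the paper uses; only the final appeal to Sion's theorem on a compact rectangle must be replaced by the inf-compact version applied to $L_\nu$ on all of $\QC_\gamma\times\DC_{f,\gamma}$.
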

\begin{proof}
  We claim that for any $\xi\in \mathcal{D}_{f,\gamma}$ and $c>0$,
  $\Lambda_c(\xi) :=\{Q\in\mathcal{Q}_\gamma:\, L_\nu(Q,\xi) \leq c\}$
  is weakly compact in $L^1$. Again let $\psi_Q :=dQ/d\mathbb{P}$ for
  each $Q\in\mathcal{Q}_\gamma$.  For the uniform integrability of
  $\Lambda_c(\xi)$, pick a $Q_0\in \mathcal{Q}_\gamma$ with
  $\gamma(Q_0)=0$ (see Remark~\ref{rem:PenaltyFuncAssump}) and observe
  that
  \begin{align}
    \label{eq:ProofMinimax1}
    \mathbb{E}_Q[f(\cdot, \xi)] \leq
    \mathbb{E}\left[2f(\cdot,\xi)^+\frac{\psi_Q+\psi_{Q_0}}2\right]
    \leq \rho_\gamma\left(2f(\cdot, \xi)^+\right)
    +\frac12\gamma(Q)<\infty,
  \end{align}
  since $\xi\in\mathcal{D}_{f,\gamma}$. Thus $\Lambda_c(\xi)
  \subset\left\{Q\in\mathcal{Q}_\gamma: \gamma(Q)\leq
    2\left(c-\nu(\xi) +\rho_\gamma(2 f(\cdot,\xi)^+)\right)\right\}$,
  hence $\Lambda_c(\xi)$ is uniformly integrable by
  (\ref{eq:PenaltyFuncCompact}) and so is $\Lambda'_c(\xi)
  :=\left\{f(\cdot, \xi)^+\psi_Q:\, L_\nu(Q,\xi)\leq c\right\}$ by
  (\ref{eq:UIOrliczIFF}) since $\xi\in\mathcal{D}_{f,\gamma}$.

  To see that $\Lambda_c(\xi)$ is weakly closed, it suffices to show
  that it is norm-closed since convex. So let $(Q_n)_n$ be a sequence
  in $\Lambda_c$ converging in $L^1$ to some $Q$, then passing to a
  subsequence, we can suppose without loss that $\psi_{Q_n}\rightarrow
  \psi_Q$ a.s. too. From the previous paragraph,
  $(f(\cdot,\xi)^+\psi_{Q_n})_n$ is uniformly integrable, hence by the
  (reverse) Fatou lemma, we have $\mathbb{E}_Q[f(\cdot, \xi)]\geq
  \limsup_n \mathbb{E}_{Q_n}[f(\cdot,\xi)]$ and consequently,
  \begin{align*}
    L_\nu(Q,\xi) &=\nu(\xi)-\mathbb{E}_Q[f(\cdot, \xi)]+\gamma(Q)\\
    &\leq\nu(\xi)+\liminf_n -\mathbb{E}_{Q_n}[f(\cdot,\xi)]
    +\liminf_n\gamma(Q_n)\\
    &\leq \liminf_n\left(\nu(\xi)-\mathbb{E}_{Q_n}[f(\cdot, \xi)]
      +\gamma(Q_n)\right)\leq c.
  \end{align*}
  We deduce that $\Lambda_c(\xi)$ is weakly closed, hence weakly
  compact. Now (\ref{eq:ProofMainMinimax1}) follows from a minimax
  theorem (see \citep[Appendix~A]{MR3165235}).
\end{proof}

Noting that $\mathcal{I}_{f,\gamma}^*(\nu)\geq
\sup_{\xi\in\mathcal{D}_{f,\gamma}}
(\nu(\xi)-\mathcal{I}_{f,\gamma}(\xi))
=\sup_{\xi\in\mathcal{D}_{f,\gamma}} \inf_{Q\in\mathcal{Q}_\gamma}
L_\nu(Q,\xi)$, we deduce from Lemma~\ref{lem:ProofRockafellarMinimax1}
that for any $\nu\in ba$,
  \begin{equation}
    \label{eq:Minimax1}
    \mathcal{I}_{f,\gamma}^*(\nu) \geq
    \inf_{Q\in\mathcal{Q}_\gamma} \sup_{\xi\in\mathcal{D}_{f,\gamma}} L_\nu(Q,\xi)
    =\inf_{Q\in\mathcal{Q}_\gamma}\sup_{\xi\in\mathcal{D}_{f,\gamma}} \left(\nu(\xi) -\mathbb{E}_Q\left[f(\cdot,
        \xi)\right] +\gamma(Q)\right).
  \end{equation}

\begin{lemma}
  \label{lem:MbleSelection1}
  Let $\eta,\zeta,\psi\in L^1$ such that $\psi\geq 0$ a.s. and
  \begin{equation}
    \label{eq:InfiniteMbleSelec1}
    \tilde f^*\left(\cdot,\eta,\psi\right) 
    =\sup_{x\in\mathrm{dom}f}(x\eta-\psi f(\cdot,x))>\zeta\text{ a.s.}
  \end{equation}
  Then there exists a $\hat\xi\in L^0$ such that
  \begin{equation}
    \label{eq:InfiniteMbleSelec2}
    f(\cdot,\hat\xi)<\infty\text{ a.s. and } \hat\xi \eta-\psi(\cdot,\hat\xi)\geq \zeta\text{ a.s. }
  \end{equation}

\end{lemma}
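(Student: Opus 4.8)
The plan is to prove Lemma~\ref{lem:MbleSelection1} by a measurable selection argument applied pointwise. Observe first that the condition (\ref{eq:InfiniteMbleSelec1}) says that for a.e.\ $\omega$, the supremum $\sup_{x\in\dom f(\omega,\cdot)}\bigl(x\eta(\omega)-\psi(\omega)f(\omega,x)\bigr)$ is strictly greater than $\zeta(\omega)$; hence for a.e.\ $\omega$ the (nonempty) set
\begin{equation*}
  S(\omega):=\bigl\{x\in\RB:\, f(\omega,x)<\infty \text{ and } x\eta(\omega)-\psi(\omega)f(\omega,x)\geq\zeta(\omega)\bigr\}
\end{equation*}
is nonempty. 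Indeed, if the supremum strictly exceeds $\zeta(\omega)$, there is some $x$ in $\dom f(\omega,\cdot)$ (so $f(\omega,x)<\infty$) with $x\eta(\omega)-\psi(\omega)f(\omega,x)>\zeta(\omega)\geq\zeta(\omega)$, witnessing $S(\omega)\ne\emptyset$. So what remains is to extract a measurable selection $\hat\xi(\omega)\in S(\omega)$ defined for a.e.\ $\omega$.

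The key step is to set this up as a measurable-selection problem to which standard results (e.g.\ \citep{rockafellar_wets98}, or the Aumann/von Neumann selection theorem) apply. First I would define the function $g(\omega,x):=\psi(\omega)f(\omega,x)-x\eta(\omega)+\zeta(\omega)$ on $\Omega\times\RB$; since $f$ is a normal convex integrand and $\psi,\eta,\zeta\in L^1$ are measurable, $g$ is itself a normal (convex, because $\psi\geq 0$) integrand — in particular $\FC\otimes\BC(\RB)$-measurable with lsc proper sections on the set where $S(\omega)\neq\emptyset$. Then $S(\omega)=\{x:\, f(\omega,x)<\infty,\ g(\omega,x)\leq 0\}$, which is the intersection of $\dom f(\omega,\cdot)$ with the (closed, as $g(\omega,\cdot)$ is lsc) sublevel set $\{g(\omega,\cdot)\leq 0\}$. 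The graph $\{(\omega,x):\,x\in S(\omega)\}$ is therefore $\FC\otimes\BC(\RB)$-measurable: it equals $\{g\leq 0\}\cap\dom f$, and $\dom f=\{(\omega,x):\,f(\omega,x)<\infty\}=\bigcup_{m}\{f<m\}$ is measurable since $f$ is a measurable integrand. Having a measurable graph with a.e.-nonempty sections over the complete space $(\Omega,\FC,\PB)$, the measurable selection theorem produces a measurable $\hat\xi:\Omega\to\RB$ with $\hat\xi(\omega)\in S(\omega)$ for a.e.\ $\omega$, which is exactly (\ref{eq:InfiniteMbleSelec2}).

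Alternatively, and perhaps cleaner, one can use the Castaing-type representation already invoked in the proof of Lemma~\ref{lem:ftildeNormal1}: there is a sequence $(\bar\xi_n)\subset L^0$ with $\bar\xi_n(\omega)\in\dom f(\omega,\cdot)$ for a.e.\ $\omega$ and $\{\bar\xi_n(\omega)\}_n$ dense in $\dom f(\omega,\cdot)$, so that $\tilde f^*(\omega,\eta(\omega),\psi(\omega))=\sup_n\bigl(\bar\xi_n(\omega)\eta(\omega)-\psi(\omega)f(\omega,\bar\xi_n(\omega))\bigr)$. Because this supremum strictly exceeds $\zeta(\omega)$ a.s., the sets $A_n:=\{\omega:\,\bar\xi_n(\omega)\eta(\omega)-\psi(\omega)f(\omega,\bar\xi_n(\omega))\geq\zeta(\omega)\}$ cover $\Omega$ up to a null set; disjointifying them as $A_1':=A_1$, $A_n':=A_n\setminus(A_1\cup\cdots\cup A_{n-1})$ and setting $\hat\xi:=\sum_n \bar\xi_n\ind_{A_n'}$ gives a measurable function satisfying (\ref{eq:InfiniteMbleSelec2}) directly. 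I would likely present this countable-covering version since it avoids citing the general selection theorem and reuses machinery already in hand.

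The main obstacle, and the only point needing care, is the strict inequality in the hypothesis versus the non-strict one in the conclusion: a priori the supremum $\tilde f^*$ need not be attained, so one cannot simply pick a maximizer. The strictness $\tilde f^*(\cdot,\eta,\psi)>\zeta$ a.s.\ is exactly what bridges this gap — it guarantees that some element of the (countable, dense) approximating family already beats $\zeta$ pointwise a.s., so no attainment is required. A secondary technical point is making sure the disjointified covering sets are measurable and exhaust $\Omega$ modulo $\PB$-null sets; this is routine given that each $A_n\in\FC$ (as $\bar\xi_n,\eta,\psi,\zeta$ and $f(\cdot,\bar\xi_n)$ are all measurable) and $\bigcup_n A_n=\Omega$ a.s.\ by density of $\{\bar\xi_n(\omega)\}_n$ and lower semicontinuity of $-g(\omega,\cdot)$.
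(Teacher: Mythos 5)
Your argument is correct, but it takes a genuinely different route from the paper's. The paper also starts from the multifunction $S(\omega)$, but it invokes the \emph{closed-valued} measurable selection theorem (\citep{rockafellar_wets98}, Cor.~14.6), which only applies on $\{\psi>0\}$, where $S(\omega)$ is the sublevel set $\{x:\,f(\omega,x)-x\eta(\omega)/\psi(\omega)\leq-\zeta(\omega)/\psi(\omega)\}$ and hence closed; on $\{\psi=0\}$, where $S(\omega)$ need not be closed, the paper builds a selector by hand, splitting further into $\{\eta=0\}$ (take $\xi_0\in\dom(\IC_{f,\gamma})$) and $\{\eta\neq 0\}$ (an explicit point constructed from $a^{\pm}_{f^*}$). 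Both of your alternatives bypass this case analysis: the graph-measurability route replaces the closed-valued selection theorem by the von Neumann--Aumann theorem, which is legitimate here because $(\Omega,\FC,\PB)$ is complete and the graph $\{g\leq 0\}\cap\dom f$ is $\FC\otimes\BC(\RB)$-measurable; and the countable-covering route reuses the Castaing sequence $(\bar\xi_n)$ from the proof of Lemma~\ref{lem:ftildeNormal1} and needs no selection theorem at all, only the identity $\tilde f^*(\cdot,\eta,\psi)=\sup_n\bigl(\bar\xi_n\eta-\psi f(\cdot,\bar\xi_n)\bigr)$ together with the strictness in (\ref{eq:InfiniteMbleSelec1}), which guarantees the sets $A_n$ cover $\Omega$ up to a null set. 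The latter is arguably the cleanest of the three. One small remark: the reason the supremum over the dense sequence recovers the full supremum is not the semicontinuity of $-g(\omega,\cdot)$ alone (an upper semicontinuous function can strictly exceed its values on a dense subset) but the fact that a proper lsc convex function on $\RB$ is continuous \emph{relative to its domain}; this is precisely what underlies the representation in Lemma~\ref{lem:ftildeNormal1}, which you cite, so nothing is actually missing.
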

\begin{proof}
  This amounts to proving that the multifunction
  \begin{align*}
    S(\omega):=\{x\in \mathrm{dom}f(\omega,\cdot): \,
    x\eta(\omega)-\psi(\omega)f(\omega,x)\geq \zeta(\omega)\}
  \end{align*}
  admits a measurable selection.  $S$ is nonempty valued by
  (\ref{eq:InfiniteMbleSelec1}), and measurable since
  $g(\omega,x):=\psi(\omega)f(\omega,x)-x\eta(\omega)$ (with the
  convention $0\cdot\infty=0$) is a normal convex integrand (see
  \citep[][Prop.~14.44, Cor.~14.46]{rockafellar_wets98}), and
  $S(\omega) =\mathrm{dom}f(\omega,\cdot)\cap \{x:\, g(\omega,x)\leq
  -\zeta(\omega)\}$. On $\{\psi>0\}$, we have simply $S=\{x:\,
  f(\cdot,x)-x\frac{\eta}{\psi}\leq -\frac{\zeta}{\psi}\}$ which is
  \emph{closed} since $f$ is normal. Thus
  \begin{align*}
    S'(\omega)=
    \begin{cases}
      S(\omega)&\text{ if }\omega\in \{\psi>0\},\\
      \emptyset&\text{ if }\omega\in \{\psi=0\},
    \end{cases}
  \end{align*}
  is a closed-valued measurable multifunction with $\mathrm{dom}
  S'=\{\omega:\, S'(\omega)\neq \emptyset\}=\{\psi>0\}$, thus the
  standard measurable selection theorem (see
  \citep{rockafellar_wets98}, Cor.~14.6) shows the existence of
  $\xi'\in L^0$ such that $\xi'(\omega)\in S'(\omega)=S(\omega)$ for
  $\omega\in\{\psi>0\}$.

  On $\{\psi=0\}$, the multifunction $S$ need not be closed-valued. So
  we explicitly construct a selector. First, on $\{\psi=\eta=0\}$, we
  can take any $\xi_0\in
  \mathrm{dom}\mathcal{I}_{f,\gamma}$($\neq\emptyset$ by assumption),
  which satisfies $\xi(\omega)\in \mathrm{dom} f(\omega,\cdot)$ for
  a.e. $\omega$ by (\ref{eq:DomIFImplyDomF}), and $\xi_0\eta-\psi
  f(\cdot, \xi_0)=0>\zeta$ on $\{\psi=\eta=0\}$ since
  (\ref{eq:InfiniteMbleSelec1}) reads as $\zeta<0$ when $\psi=\eta=0$.

  Next, we put
  \begin{align*}
    \xi'':=\frac12\left(\left(a^+_f\wedge \frac\zeta\eta\right)+
      \left(a^-_f\vee \frac\zeta\eta\right)\right)\quad \text{on }
    \{\psi=0,\eta\neq 0\},
  \end{align*}
  where $a^\pm_{f^*}(\omega)=\lim_{x\rightarrow
    \pm\infty}f^*(\omega,x)/x$ as in (\ref{eq:ConjTilde3}). Recalling
  that $\mathrm{dom} f(\omega,\cdot) =\{a^+_{f^*}(\omega)\}$ if
  $a^-_{f^*}(\omega) =a^+_{f^*}(\omega)$, and otherwise
  $(a^-_{f^*}(\omega),a^+_{f^*}(\omega)) \subset \mathrm{dom}
  f(\omega,\cdot)\subset [a^-_{f^*}(\omega),a^+_{f^*}(\omega)]$, we
  have $\xi''(\omega)\in \mathrm{dom} f(\omega,\cdot)$ for $\omega\in
  \{\psi=0,\eta\neq 0\}$. Also, (\ref{eq:InfiniteMbleSelec1}) reads as
  $a^+_f\eta>\zeta$ when $\psi=0$ and $\eta>0$, hence
  \begin{align*}
    \xi''\eta-\psi f(\cdot,\xi'') = \xi''\eta &=\frac12
    \zeta+\frac12\left(a^-_f\vee \zeta\right)\geq\zeta\quad \text{on }
    \{\psi=0,\eta>0\}.
  \end{align*}
  Similarly, (\ref{eq:InfiniteMbleSelec1}) reads as $a^-_f\eta>\zeta$
  on $\{\psi=0,\eta<0\}$, hence
  \begin{align*}
    \xi''\eta&=\frac12\left(\left(a^+_f\eta\vee \zeta\right)
      +\left(a^-_f\eta\wedge \zeta\right)\right) =\frac12 \left(
      a^+_f\vee \zeta\right) +\frac12\zeta\geq \zeta\quad \text{on }
    \{\psi=0,\eta<0\}.
  \end{align*}
  Now $\hat\xi :=\xi'\ind_{\{\psi>0\}} + \xi_0\ind_{\{\psi=0,\eta=0\}}
  +\frac12\xi''\ind_{\{\psi=0, \eta\neq 0\}}$ is a desired measurable
  selection.
\end{proof}

\begin{proof}[Proof of Theorem~\ref{thm:RobustRockafellar}]
  The upper bound is already established at the beginning of this
  section. For the lower bound, it suffices to show that for any
  $\nu=\nu_r+\nu_s\in ba$,
  \begin{equation}
    \label{eq:ProofMainThKey1}
    \alpha<\mathcal{H}_{f^*,\gamma}(\nu_r),\, 
    \beta<\sup_{\xi\in\mathcal{D}_{f,\gamma}}\nu_s(\xi)\, 
    \Rightarrow\, \alpha+\beta<\mathcal{I}^*_{f,\gamma}(\nu).
  \end{equation}
  In the sequel, we fix an arbitrary $\nu=\nu_r+\nu_s\in ba$ and
  $\alpha, \beta$ as in (\ref{eq:ProofMainThKey1}), and we denote
  \begin{align*}
    \eta:=\frac{d\nu_r}{d\mathbb{P}} \quad\text{and}\quad \psi_Q
    :=\frac{dQ}{d\mathbb{P}},\, \forall Q\in\mathcal{Q}_\gamma.
  \end{align*}

  By the assumption on $\beta$, there exists a
  $\xi_s\in\mathcal{D}_{f,\gamma}$ with $\nu_s(\xi_s)>\beta$, and by
  the singularity of $\nu_s$, there exists an increasing sequence
  $(A_n)_n$ in $\mathcal{F}$ with $\mathbb{P}(A_n)\uparrow 1$ and
  $|\nu_s|(A_n)=0$ for all $n$, so that $\nu_s(\xi_s\ind_{A^c_n})
  =\nu_s(\xi_s)>\beta$.  On the other hand, the assumption on $\alpha$
  implies that
  \begin{align*}
    \alpha<\mathcal{H}_{f^*}(\nu_r|Q)+\gamma(Q)
    =\mathbb{E}\left[\tilde
      f^*\left(\cdot,\eta,\psi_Q\right)\right]+\gamma(Q), \quad\forall
    Q\in\mathcal{Q}_\gamma.
  \end{align*}
  Then for each $Q\in\mathcal{Q}_\gamma$, there exists a $\zeta_Q\in
  L^1$ with
  \begin{equation*}
    \mathbb{E}[\zeta_Q]>\alpha-\gamma(Q)\text{ and } 
    \zeta_Q<\tilde f^*(\cdot, \eta,\psi_Q)\text{ a.s.}
  \end{equation*}
  (even if $\Phi:=\tilde f^*\left(\cdot,\eta,\psi_Q\right)\not\in
  L^1$: since $\Phi^-\in L^1$, choosing $\varepsilon>0$ so that
  $\mathbb{E}[\Phi]-\varepsilon>\alpha$, we have
  $\lim_N\mathbb{E}[(\Phi-\varepsilon)\wedge N]>\alpha$ by the
  monotone convergence theorem, so $(\Phi-\varepsilon) \wedge N_0$
  with a big $N_0$ does the job.)  Therefore,
  Lemma~\ref{lem:MbleSelection1} implies that there exists a
  $\xi_Q^0\in L^0$ such that
  \begin{equation}
    \label{eq:ProofMainThKey3}
    f(\cdot, \xi_Q^0)<\infty\text{ and } \xi^0_Q\eta
    -\psi_Q f(\cdot, \xi^0_Q)\geq \zeta_Q\text{ a.s.}
  \end{equation}   

  Note that this does not guarantees that $\xi^0_Q$ is in
  $\mathcal{D}_{f,\gamma}$ (if it was, there would be nothing to prove
  anymore). So we approximate $\xi^0_Q$ by elements of
  $\mathcal{D}_{f,\gamma}$. Let $B_n:=\{|\xi^0_Q|\leq
  n\}\cap\{|f(\cdot, \xi^0_Q)|\leq n\}$ and $C_n:=A_n\cap B_n$, then
  $\mathbb{P}(C_n)\uparrow 1$ since $f(\cdot, \xi^0_Q)<\infty$
  a.s. Put
  \begin{align*}
    \xi^n_Q :=\xi^0_Q\ind_{C_n}+\xi_s\ind_{C^c_n},\quad\forall
    n\in\mathbb{N}.
  \end{align*}
  Then for each $n$, $\xi^n_Q\in\mathcal{D}_{f,\gamma}$ since
  $\|\xi^0_Q\|_\infty\leq n+\|\xi_s\|_\infty<\infty$ and $f(\cdot,
  \xi^n_Q)^+=f(\cdot, \xi^0_Q)^+\ind_{C_n}+f(\cdot, \xi_s)^+
  \ind_{C_n^c}\leq n +f(\cdot, \xi_s)^+\in M^{\rho_\gamma}_u$ by the
  solidness of the space.  On the other hand,
  \begin{align*}
    \nu_r(\xi^n_Q) -\mathbb{E}_Q\Bigl[f(\cdot, \xi^n_Q)\Bigr]
&    =\mathbb{E}\left[\xi^n_Q\eta-\psi_Q f(\cdot,
      \xi^n_Q)\right]\\
    &=\mathbb{E}\left[\ind_{C_n}\left(\xi^0_Q\eta-\psi_Qf(\cdot,
        \xi^0_Q)\right)\right]
    +\mathbb{E}\left[\ind_{C_n^c}\left(\xi_s\eta-\psi_Qf(\cdot,
        \xi_s)\right)\right]\\
    &\geq \mathbb{E}[\zeta_Q\ind_{C_n}]
    +\mathbb{E}\left[\ind_{C_n^c}\left(\xi_s\eta-\psi_Qf(\cdot,
        \xi_s)\right)\right]\\
    &=\mathbb{E}[\zeta_Q]
    +\mathbb{E}\Bigl[\ind_{C^c_n}\bigl(\underbrace{\xi_s\eta -\psi_Q
      f(\cdot, \xi_s)-\zeta_Q}_{\mathclap{=: \Xi_Q}}\bigr)\Bigr].
  \end{align*}
  Note that $\Xi_Q\in L^1$ since $f(\cdot, \xi_s)\in
  \bigcap_{Q\in\mathcal{Q}_\gamma}L^1(Q)$ (by
  $\xi_s\in\mathcal{D}_{f,\gamma}$), thus
  $\lim_n\mathbb{E}[\ind_{C_n^c}\Xi_Q]=0$. Therefore, noting that
  $\nu_s(\xi_Q^n) =\nu_s(\ind_{C_n^c}\xi_Q^n)
  =\nu_s(\ind_{C_n^c}\xi_s)=\nu_s(\xi_s)$,
  \begin{align*}
    \sup_{\xi\in\mathcal{D}_{f,\gamma}}
  \left(\nu(\xi)-\mathbb{E}_Q[f(\cdot,\xi)]\right) &\geq
    \sup_n\left(\nu_r(\xi^n_Q)-\mathbb{E}_Q[f(\cdot,\xi^n_Q)]+\nu_s(\xi_Q^n)\right)\\
    &\geq\limsup_n\left(\mathbb{E}[\zeta_Q]
      +\mathbb{E}[\ind_{C_n^c}\Xi_Q]+\nu_s(\xi_s)\right)
    \\
    & =\mathbb{E}[\zeta_Q]+\nu_s(\xi_s)>\alpha-\gamma(Q)+\beta.
  \end{align*}
  In view of (\ref{eq:Minimax1}), we have
  \begin{align*}
    \mathcal{I}_{f,\gamma}^*(\nu)
    &\stackrel{\text{(\ref{eq:Minimax1})}}\geq
    \inf_{Q\in\mathcal{Q}_\gamma} \sup_{\xi\in \mathcal{D}_{f,\gamma}}
    \left(\nu(\xi)-\mathbb{E}_Q[f(\cdot, \xi)]+\gamma(Q)\right)\geq
    \alpha+\beta.
  \end{align*}
  Since $\alpha<\mathcal{H}_{f^*,\gamma}(\nu_r)$ and
  $\beta<\sup_{\xi\in \mathcal{D}_{f,\gamma}}\nu_s(\xi)$ are
  arbitrary, this completes the proof.
\end{proof}

\subsection{Proof of Theorem~\ref{thm:LebMaxCompact}}
\label{sec:ProofLebesgue}

\emph{In the sequel, the assumptions of
  Theorem~\ref{thm:LebMaxCompact} \textbf{excepting}
  (\ref{eq:AddAssNegPart}) are supposed without notice}. The
implication (i) $\Rightarrow$ (ii) is trivial since $\mathbb{R}\subset
L^\infty$, and (i) $\Rightarrow$ (iii) is clear from (\ref{eq:EstConj})
of Theorem~\ref{thm:RobustRockafellar}, while (ii) $\Rightarrow$ (i)
follows from (cf. \citep{MR0310612,MR0512209})
\begin{equation}
  \label{eq:ConstAreEnough}
  a\leq \xi \leq b\text{ a.s., }a,b\in\mathbb{R}\,\Rightarrow\,    
  f(\cdot,\xi)\leq f(\cdot,a)^++f(\cdot,b)^+.
\end{equation}
Indeed, the assumption implies the existence of a $[0,1]$-valued
random variable $\alpha$ such that $\xi =\alpha a+(1-\alpha)b$
a.s. Since $f(\omega,\cdot)$ is convex for a.e. $\omega$, we see that
$f(\omega,\xi(\omega))\leq \alpha(\omega)f(\omega,a)
+(1-\alpha(\omega))f(\omega,b)\leq f(\omega,a)^+ +f(\omega,b)^+$ for
a.e. $\omega\in \Omega$.

Given $\mathrm{dom}(\mathcal{I}_{f,\gamma}) =L^\infty$ and
$\sigma(L^\infty,L^1)$-lsc of $\mathcal{I}_{f,\gamma}$
(Lemma~\ref{lem:WellDefFatou}) as well as
$\mathcal{I}_{f,\gamma}^*|_{L^1} =\mathcal{H}_{f^*,\gamma}$
(Corollary~\ref{cor:RobRockL1}), (iv) $\Leftrightarrow$ (v) is a
special case of the following (with $E=L^\infty$ and $E'=L^1$):
\begin{lemma}[\citep{MR0160093}, Propositions 1 and 2]
  \label{lem:Moreau}
  Let $\langle E,E'\rangle$ be a dual pair and $\varphi$ a
  $\sigma(E,E')$-lsc finite convex function on $E$ with the conjugate
  $\varphi^*$ on $E'$. Then $\varphi$ is $\tau(E,E')$-continuous on
  $E$ if and only if $\{x'\in E':\, \varphi^*(x')\leq c\}$ is
  $\sigma(E',E)$-compact for each $c\in\mathbb{R}$.
\end{lemma}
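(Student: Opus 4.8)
\emph{Proof strategy.} The plan is to prove the two implications separately; in both, the bridge between $\tau(E,E')$-continuity and $\varphi^{*}$ is the elementary fact that a convex function which is finite on all of $E$ is $\tau(E,E')$-continuous \emph{everywhere} as soon as it is bounded above on \emph{some} $\tau(E,E')$-neighbourhood of a single point, equivalently (after an affine translation) of the origin. Throughout I would use freely the Fenchel--Moreau identity $\varphi=\varphi^{**}$ (available since $\varphi$ is proper, convex and $\sigma(E,E')$-lsc), the automatic $\sigma(E',E)$-lower semicontinuity of $\varphi^{*}$, the Mackey--Arens theorem (a base of $0$-neighbourhoods of $\tau(E,E')$ is given by the polars $K^{\circ}$ of $\sigma(E',E)$-compact, convex, balanced sets $K\subseteq E'$), and the gauge identity $\sup_{x\in K^{\circ}}\langle x,x'\rangle=\inf\{t>0:\,x'\in tK\}$ for such $K$.

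For the implication ``(Mackey continuity) $\Rightarrow$ (weakly compact sublevels)'', pick $K$ as above and $M\in\RB$ with $\varphi\le M$ on $K^{\circ}$. For $x\in K^{\circ}$, the Fenchel inequality gives $\langle x,x'\rangle\le\varphi(x)+\varphi^{*}(x')\le M+\varphi^{*}(x')$; taking the supremum over $x\in K^{\circ}$ and using the gauge identity yields $\{\varphi^{*}\le c\}\subseteq(M+c)K$ for every $c\in\RB$. Being $\sigma(E',E)$-closed (as a sublevel set of the weakly lsc $\varphi^{*}$) and contained in the $\sigma(E',E)$-compact set $(M+c)K$, each sublevel set of $\varphi^{*}$ is $\sigma(E',E)$-compact. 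This direction is routine.

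The converse is the substantial one, and the step I expect to be the main obstacle. Assume all sublevel sets of $\varphi^{*}$ are $\sigma(E',E)$-compact. Subtracting the constant $\varphi(0)$ (which affects neither hypothesis nor conclusion) we may assume $\varphi(0)=0$, hence $\varphi^{*}\ge\inf\varphi^{*}=-\varphi^{**}(0)=0$. Fix any $x_0'\in\dom\varphi^{*}$, set $c_0:=\varphi^{*}(x_0')$ and $S:=\{\varphi^{*}\le c_0+1\}$, a nonempty convex $\sigma(E',E)$-compact set with $x_0'\in S$. Then $\conv(S\cup(-S))$ is convex, balanced and $\sigma(E',E)$-compact --- being the image of the compact set $[0,1]\times S\times S$ under the $\sigma(E',E)$-continuous map $(\lambda,p,q)\mapsto\lambda p-(1-\lambda)q$ --- so by Mackey--Arens the set $W:=\{x\in E:\,|\langle x,x'\rangle|\le\tfrac14\ \text{for all }x'\in S\}=\tfrac14\,(\conv(S\cup(-S)))^{\circ}$ is a $\tau(E,E')$-neighbourhood of $0$. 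The crucial step is a scaling estimate giving $\varphi\le M:=|c_0|+1$ on $W$: for $x\in W$ and $x'\in\dom\varphi^{*}$, either $x'\in S$, in which case $\langle x,x'\rangle-\varphi^{*}(x')\le\tfrac14-\varphi^{*}(x')\le\tfrac14$ (as $\varphi^{*}\ge0$), or $\varphi^{*}(x')>c_0+1$, in which case $t:=(\varphi^{*}(x')-c_0)^{-1}\in(0,1)$ and convexity of $\varphi^{*}$ puts $(1-t)x_0'+tx'$ into $S$, whence $|t\langle x,x'\rangle|\le\tfrac12$ and therefore $\langle x,x'\rangle-\varphi^{*}(x')\le\tfrac12(\varphi^{*}(x')-c_0)-\varphi^{*}(x')\le|c_0|+\tfrac12$. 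In either case $\langle x,x'\rangle-\varphi^{*}(x')\le M$, so $\varphi(x)=\varphi^{**}(x)=\sup_{x'}(\langle x,x'\rangle-\varphi^{*}(x'))\le M$; thus $\varphi$ is bounded above on a neighbourhood of $0$, and the reduction above finishes the proof.

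The genuinely delicate points are precisely the homogeneity trick in the last display --- the radius of $W$ must be tuned (anything $<\tfrac12$ works) so that the linear term $\langle x,x'\rangle$ is \emph{strictly} dominated by $\varphi^{*}(x')$ off $S$ --- together with the (otherwise standard) observation that $\conv(S\cup(-S))$ is weakly compact, which is what licenses the appeal to Mackey--Arens; everything else is bookkeeping. Note that the argument uses only the finiteness and $\sigma(E,E')$-lower semicontinuity of $\varphi$, which in the intended application ($E=L^{\infty}$, $E'=L^{1}$, $\varphi=\IC_{f,\gamma}$) are supplied by $\dom(\IC_{f,\gamma})=L^{\infty}$ and Lemma~\ref{lem:WellDefFatou}.
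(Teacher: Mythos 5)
Your argument is correct. Note, however, that the paper does not prove this lemma at all: it is quoted as a black box from Moreau's note (Propositions 1 and 2 of the cited reference), so there is no in-paper proof to compare against. What you supply is essentially Moreau's own argument made self-contained: the easy direction is the polar computation $\{\varphi^*\le c\}\subseteq (M+c)K$ via the Fenchel inequality and the bipolar theorem (with the harmless remark that a nonempty sublevel forces $M+c\ge\varphi^*(x')+ \varphi(0)\ge 0$, so the scaling makes sense), and the converse is the standard ``weakly compact sublevels of $\varphi^*$ imply $\varphi=\varphi^{**}$ is bounded above on the polar of a suitable compact set'' estimate, followed by the fact that a finite convex function bounded above on a $\tau(E,E')$-neighbourhood of one point is $\tau(E,E')$-continuous everywhere. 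Your scaling step (putting $(1-t)x_0'+tx'$ into $S$ for $t=(\varphi^*(x')-c_0)^{-1}$) is exactly the right device for controlling $\langle x,x'\rangle-\varphi^*(x')$ off the sublevel set, and the verification that $\conv(S\cup(-S))$ is weakly compact, convex and balanced is what legitimizes the appeal to Mackey--Arens. I see no gap.
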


\begin{proof}[Proof of (iii) $\Rightarrow$ (iv)]
  Put $E=L^\infty$, $E'=ba$, then $\tau(L^\infty,ba)$ is the
  norm-topology while the \emph{finite} lsc convex function
  $\mathcal{I}_{f,\gamma}$ on the Banach space $L^\infty$ is
  norm-continuous (see \cite[Ch.1, Cor.~2.5]{ekeland_temam76}). Thus
  $\Lambda_c :=\{\nu\in ba:\, \mathcal{I}_{f,\gamma}(\nu)\leq c\}$ is
  $\sigma(ba,L^\infty)$-compact from Lemma~\ref{lem:Moreau}, a
  fortiori $\{\eta\in L^1:\, \mathcal{H}_{f^*,\gamma}(\eta)\leq
  c\}=\Lambda_c\cap L^1=\Lambda_c$ (by Corollary~\ref{cor:RobRockL1}
  and (iii)) is $\sigma(L^1,L^\infty)$-compact since
  $\sigma(L^1,L^\infty) =\sigma(ba,L^\infty)|_{L^1}$.
\end{proof}
\begin{proof}[Proof of (v) $\Rightarrow$ (vi)]
  This follows from the observation that
  \begin{align*}
    \sup_n\| \xi_n\|_\infty<\infty\text{ and } \xi_n\rightarrow
    \xi\text{ a.s. }\Rightarrow\, \xi_n\rightarrow \xi \text{ for }
    \tau(L^\infty,L^1).
  \end{align*}
  Indeed, for any weakly compact ($\Rightarrow$ uniformly integrable)
  subset $\mathcal{C}\subset L^1$,
  \begin{align*}
    \sup_{\eta\in \mathcal{C}} \mathbb{E}[|\xi-\xi_n||\eta|] &\leq
    \sup_{\eta\in \mathcal{C}}
    \mathbb{E}[|\xi-\xi_n||\eta|\ind_{\{|\eta|>N\}}] + \sup_{\eta\in
      \mathcal{C}}\mathbb{E}[|\xi-\xi_n||\eta|\ind_{\{|\eta|\leq N\}}]\\
    &\leq2 \sup_n\|\xi_n\|_\infty\sup_{\eta\in \mathcal{C}}
    \mathbb{E}[|\eta|\ind_{\{|\eta|>N\}}] +N\mathbb{E}[|\xi-\xi_n|].
  \end{align*} 
  Taking a diagonal, we see $q_\mathcal{C}(\xi-\xi_n):=\sup_{\eta\in
    \mathcal{C}}|\mathbb{E}[(\xi-\xi_n)\eta]|\rightarrow 0$, while
  $q_\mathcal{C}$ with $\mathcal{C}$ running through (convex, circled)
  weakly compact subsets of $L^1$ generates $\tau(L^\infty,L^1)$.
\end{proof}

\begin{proof}[Proof of (iv) $\Rightarrow$ (vii)]
  Fix an arbitrary $\eta_0\in\mathrm{dom}(\mathcal{H}_{f^*,\gamma})$
  ($\neq \emptyset$) and $\xi\in L^\infty$. Observe that
  \begin{align*}
    \mathbb{E}[\xi(\eta+\eta_0)]&= \mathbb{E}\left[2\xi\frac{\eta
        +\eta_0}2\right] \leq\mathcal{I}_{f,\gamma}(2\xi)
    +\frac12\left(\mathcal{H}_{f^*,\gamma}(\eta)
      +\mathcal{H}_{f^*,\gamma}(\eta_0)\right),\,\forall \eta\in L^1.
  \end{align*}
  Hence $\mathbb{E}[\xi\eta] -\mathcal{H}_{f^*,\gamma}(\eta)\leq
  \mathcal{I}_{f,\gamma}(2\xi) +\|\xi\|_\infty\|\eta_0\|_1
  +\frac12\mathcal{H}_{f^*,\gamma}(\eta_0)
  -\frac12\mathcal{H}_{f^*,\gamma}(\eta)$.  Putting $C_{c,\xi}
  :=2\left(c+\mathcal{I}_{f,\gamma}(2\xi)+\|\xi\|_\infty\|\eta_0\|_1\right)
  +\mathcal{H}_{f^*,\gamma}(\eta_0)$ which does not depend on $\eta$,
  we see that
  \begin{align*}
    \mathbb{E}[\xi\eta]-\mathcal{H}_{f^*,\gamma}(\eta)\geq
    c\,\Rightarrow\, \mathcal{H}_{f^*,\gamma}(\eta)\leq C_{c,\xi}.
  \end{align*}
  Consequently, $\{\eta\in L^1:\, \mathbb{E}[\xi\eta]
  -\mathcal{H}_{f^*,\gamma}(\eta)\geq c\}$ is weakly compact for each
  $c>0$, since it is contained in a weakly compact set $\{\eta\in
  L^1:\, \mathcal{H}_{f^*,\gamma}(\eta)\leq C_{c,\xi}\}$ and
  $\eta\mapsto \mathbb{E}[\xi\eta] -\mathcal{H}_{f^*,\gamma}(\eta)$ is
  weakly upper semicontinuous. Therefore, $\sup_{\eta\in L^1}
  \left(\mathbb{E}[\xi\eta]-\mathcal{H}_{f^*,\gamma}(\eta)\right)$ is
  attained.
\end{proof}

\emph{From now on, we assume (\ref{eq:AddAssNegPart})} (thus all the
assumptions of Theorem~\ref{thm:LebMaxCompact}) which implies that
\begin{equation}
  \label{eq:ConseqAddAss1}
  f(\cdot,\xi)^+\in M^{\rho_\gamma},\,\forall \xi\in L^\infty.
\end{equation}
Indeed, by $\mathrm{dom}(\mathcal{I}_{f,\gamma})=L^\infty$ and
$\exists \xi_0\in \mathcal{D}_{f,\gamma}$, we have for all $\xi\in
L^\infty$ that
\begin{align*}
  \rho_\gamma \left(\lambda f(\cdot, \xi)\right)&\leq\frac12
  \rho_\gamma \left(f(\cdot, 2\lambda \xi -(2\lambda-1)\xi_0)\right)
  +\frac12 \rho_\gamma\left((2\lambda-1)
    f(\cdot,\xi_0)^+\right)<\infty,\,\lambda>1.
\end{align*}
Here the first term in the right hand side is $\frac12
\mathcal{I}_{f,\gamma}(2\lambda \xi-(2\lambda-1)\xi_0)<\infty$. On the
other hand, if $f(\cdot,\xi_0')^-\in M^{\rho_\gamma}$ as in
(\ref{eq:AddAssNegPart}), putting $A=\{f(\cdot, \xi)\geq 0\}$ with an
arbitrary $\xi\in L^\infty$, we see that $f(\cdot,
\xi\ind_A+\xi'_0\ind_{A^c}) =f(\cdot, \xi)^++f(\cdot,
\xi'_0)\ind_{A^c}$, hence $f(\cdot,\xi)^+\leq f(\cdot,
\xi\ind_A+\xi'_0\ind_{A^c})+f(\cdot, \xi'_0)^-$. Therefore,
$\rho_\gamma\left(\lambda f(\cdot, \xi)^+\right) \leq \frac12
\rho_\gamma\left(2\lambda f(\cdot, \xi\ind_A+\xi'_0\ind_{A^c})\right)
+\frac12\rho_\gamma\left(2\lambda f(\cdot, \xi'_0)^-\right)$, $\forall
\lambda>1$.

\begin{proof}[Proof of (vi) $\Rightarrow$ (i)]
  In view of (\ref{eq:UIOrliczIFF}) and (\ref{eq:ConseqAddAss1}), it
  suffices to show that for each $\xi\in L^\infty$,
  \begin{equation*}
    \lim_N    \sup_{\gamma(Q)\leq 1}\mathbb{E}_Q\left[f(\cdot, \xi)^+\ind_{\{f(\cdot,
        \xi)^+\geq N\}}\right]= 0,\quad \forall c>0.
  \end{equation*}
  Let $A_N:=\{f(\cdot, \xi)\geq N\}\in\mathcal{F}$ ($N\in\mathbb{N}$),
  then $\mathbb{P}(A_N)\rightarrow 0$ since $\mathcal{I}_{f,\gamma}$
  is finite. Pick a $\xi_0\in \mathcal{D}_{f,\gamma}$ and put
  $\xi_N:=(\xi-\xi_0)\ind_{A_N}$ so that
  $\xi_N+\xi_0=\xi\ind_{A_N}+\xi_0\ind_{A_N^c}$. Then
  $f(\cdot,\xi_N+\xi_0)=f(\cdot, \xi)\ind_{A_N}+f(\cdot,
  \xi_0)\ind_{A_N^c}$,
  while for $\lambda>1$,
  $f(\cdot, \xi_N+\xi_0)\leq \frac1\lambda f(\cdot,
  \lambda\xi_N+\xi_0) +\frac{\lambda-1}\lambda f(\cdot, \xi_0)$, hence
  \begin{align*}
    f(\cdot, \xi)^+\ind_{A_N}&= f(\cdot, \xi)\ind_{A_N}\leq
    \frac1\lambda f(\cdot, \lambda\xi_N+\xi_0)+\frac1\lambda f(\cdot,
    \xi_0)^-+f(\cdot,\xi_0)^+\ind_{A_N},\quad\forall \lambda>1.
  \end{align*}

  Note that since $\xi_0\in \mathcal{D}_{f,\gamma}$ and since $f(\cdot,
  \xi_0)^-\in L^{\rho_\gamma}$,
  \begin{align*}
    C_1:= \sup_{\gamma(Q)\leq c}\mathbb{E}_Q[f(\cdot, \xi_0)^-]<\infty\text{
      and } \lim_N\sup_{\gamma(Q)\leq c}
    \mathbb{E}_Q[f(\cdot,\xi_0)^+\ind_{A_N}]=0.
  \end{align*}
  Also, $\mathbb{E}_Q[f(\cdot,
  \lambda\xi_N+\xi_0)]\leq\rho_\gamma(f(\cdot,\lambda\xi_N+\xi_0))+\gamma(Q)=
  \mathcal{I}_{f,\gamma}(\lambda\xi_N+\xi_0)+ \gamma(Q)$, while for each
  $\lambda>1$, we have $\sup_N\|\lambda\xi_N+\xi_0\|_\infty
  \leq\lambda\|\xi\|_\infty+(\lambda-1)\|\xi_0\|_\infty<\infty$ and
  $\lambda\xi_N+\xi_0\rightarrow \xi_0$ a.s., thus
  $\mathcal{I}_{f,\gamma}(\lambda\xi_N+\xi_0)\rightarrow
  \mathcal{I}_{f,\gamma}(\xi_0)$ by (vi), hence for some big $N_\lambda$,
  $\mathcal{I}_{f,\gamma}(\lambda\xi_N+\xi_0)\leq
  \mathcal{I}_{f,\gamma}(\xi_0)+1=:C_2$ for $N>N_\lambda$. Summing up,
  \begin{align*}
    \sup_{\gamma(Q)\leq c} \mathbb{E}_Q[f(\cdot, \xi)^+\ind_{A_N}]&\leq
    \frac{C_2+c +C_1}{\lambda} +\sup_{\gamma(Q)\leq
      c}\mathbb{E}_Q[f(\cdot,\xi_0)^+\ind_{A_N}],\,\forall N>N_\lambda,\,
    \lambda>1.
  \end{align*}
  Now a diagonal argument yields that $\sup_{\gamma(Q)\leq c}
  \mathbb{E}_Q[f(\cdot, \xi)^+\ind_{A_N}]\rightarrow 0$.
\end{proof}

\begin{proof}[Proof of (vii) $\Rightarrow$ (iv)]
  Under (\ref{eq:ConseqAddAss1}),
  $\lim_{\|\eta\|_1\rightarrow\infty}
  \mathcal{H}_{f^*,\gamma}(\eta)/\|\eta\|_1=\infty$,
  i.e., $\mathcal{H}_{f^*,\gamma}$ is \emph{coercive on $L^1$}.  For,
  since $\|\eta\|_1=\mathbb{E}[\mathrm{sgn}(\eta)\eta]$ where
  $\mathrm{sgn}(\eta)=\ind_{\{\eta\geq 0\}}-\ind_{\{\eta<0\}}\in
  L^\infty$,
  $f(\cdot, n\mathrm{sgn}(\eta)) \leq f(\cdot,-n)^++f(\cdot,n)^+\in
  M^{\rho_\gamma}$
  by (\ref{eq:ConstAreEnough}), (\ref{eq:ConseqAddAss1}), and
  $\mathcal{H}_{f^*,\gamma}(\eta)
  =\sup_{\xi\in
    L^\infty}(\mathbb{E}[\xi\eta]-\mathcal{I}_{f,\gamma}(\xi))$,
  we have
  \begin{align*}
    \mathcal{H}_{f^*,\gamma}(\eta)&\geq
    \mathbb{E}[n\mathrm{sgn}(\eta)\eta] -\mathcal{I}_{f,\gamma}(n\mathrm{sgn}(\eta))\\
    &\geq n\|\eta\|_1-\frac{\rho_\gamma\left(2f(\cdot,-n)^+\right)
      -\rho_\gamma\left(2f(\cdot, n)^+\right)}2>-\infty,\,\forall n.
  \end{align*}
  Then the \emph{coercive James' theorem}
  \citep[Th.~2]{MR2847443} applied to the coercive
  function $\mathcal{H}_{f^*,\gamma}$ on the Banach space $E=L^1$ implies the
  relative weak compactness of all the sublevel sets $\{\eta\in L^1:\,
  \mathcal{H}_{f^*,\gamma}(\eta)\leq c\}$ which are weakly closed since
  $\mathcal{H}_{f^*,\gamma}$ is weakly lower semicontinuous.
\end{proof}

\section*{Appendix}

\appendix

\section{Lower semicontinuity of $\mathcal{H}_{f^*}+\gamma$}
\label{sec:LSCEntropy}

\begin{lemma}
  \label{lem:DivergenceLSC}
  Under the assumptions of Theorem~\ref{thm:RobustRockafellar},
  $\mathcal{H}_{f^*}+\gamma$ is jointly weakly lower semicontinuous on
  $L^1\times\mathcal{Q}_\gamma$, and $\inf_{Q\in\mathcal{Q}_\gamma}
  \left(\mathcal{H}_{f^*}(\eta|Q)+\gamma(Q)\right)$ is attained for every
  $\eta\in L^1$.
\end{lemma}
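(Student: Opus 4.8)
\section*{Proof proposal for Lemma~\ref{lem:DivergenceLSC}}

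The plan is to reduce the joint \emph{weak} lower semicontinuity to \emph{norm} lower semicontinuity: by Lemma~\ref{lem:SupIntWellDef} and convexity of $\QC_\gamma$ (the effective domain of the convex $\gamma$ on the convex set $\QC$), the functional $(\eta,Q)\mapsto\HC_{f^*}(\eta|Q)+\gamma(Q)$, extended by $+\infty$ off $L^1\times\QC_\gamma$, is a proper convex function on $L^1\times L^1$, and for convex functions weak and norm lower semicontinuity coincide. So it suffices to show: whenever $\eta_n\to\eta$ in $L^1$ and $Q_n\to Q$ in $L^1$ with $Q_n\in\QC_\gamma$, then $\HC_{f^*}(\eta|Q)+\gamma(Q)\le\liminf_n\bigl(\HC_{f^*}(\eta_n|Q_n)+\gamma(Q_n)\bigr)$. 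We may assume the $\liminf$ is a finite limit $\ell$ and, passing to a subsequence, that $\eta_n\to\eta$ and $\psi_{Q_n}\to\psi_Q$ a.s., where $\psi_{Q}:=dQ/d\PB$.

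The crux is an a priori bound on $\gamma(Q_n)$. Fix $\xi_0\in\DC_{f,\gamma}$ (nonempty by (\ref{eq:IntegrabilityD})); then $\xi_0(\omega)\in\dom f(\omega,\cdot)$ a.s. by (\ref{eq:DomIFImplyDomF}) and $f(\cdot,\xi_0)^+\in M^{\rho_\gamma}_u\subset M^{\rho_\gamma}$. Young's inequality (\ref{eq:IneqYoungFtilde}) gives $\tilde f^*(\cdot,\eta_n,\psi_{Q_n})\ge\xi_0\eta_n-\psi_{Q_n}f(\cdot,\xi_0)^+$ a.s.; taking expectations, using $\EB_{Q_n}[2f(\cdot,\xi_0)^+]\le\rho_\gamma\bigl(2f(\cdot,\xi_0)^+\bigr)+\gamma(Q_n)$ (definition of $\rho_\gamma$) and $\rho_\gamma\bigl(2f(\cdot,\xi_0)^+\bigr)<\infty$ (since $f(\cdot,\xi_0)^+\in M^{\rho_\gamma}$), we obtain
\[
  \HC_{f^*}(\eta_n|Q_n)+\gamma(Q_n)\ \ge\ \EB[\xi_0\eta_n]-\tfrac12\rho_\gamma\bigl(2f(\cdot,\xi_0)^+\bigr)+\tfrac12\gamma(Q_n).
\]
Since $\EB[\xi_0\eta_n]\to\EB[\xi_0\eta]$ is finite and the left side stays bounded, $\gamma(Q_n)$ is bounded, say $\gamma(Q_n)\le c$ for all $n$; hence by (\ref{eq:PenaltyFuncCompact}) and weak lower semicontinuity of $\gamma$, $Q\in\QC_\gamma$ with $\gamma(Q)\le\liminf_n\gamma(Q_n)$.

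Now the integral part. Because $\gamma(Q_n)\le c$ and $\xi_0\in\DC_{f,\gamma}$, (\ref{eq:UIOrliczIFF}) makes $\{\psi_{Q_n}f(\cdot,\xi_0)^+\}_n$ uniformly integrable, and $\{\xi_0\eta_n\}_n$ is uniformly integrable as well (weakly convergent in $L^1$, $\xi_0\in L^\infty$); thus $g_n:=\xi_0\eta_n-\psi_{Q_n}f(\cdot,\xi_0)^+$ is a uniformly integrable minorant of $\tilde f^*(\cdot,\eta_n,\psi_{Q_n})$ with $g_n\to g:=\xi_0\eta-\psi_Q f(\cdot,\xi_0)^+$ a.s.\ and $\EB[g_n]\to\EB[g]$ (Vitali). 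Applying Fatou's lemma to the nonnegative functions $\tilde f^*(\cdot,\eta_n,\psi_{Q_n})-g_n$ and the lower semicontinuity of $(y,z)\mapsto\tilde f^*(\omega,y,z)$ on $\RB\times\RB_+$ (Lemma~\ref{lem:ftildeNormal1}) gives $\HC_{f^*}(\eta|Q)-\EB[g]\le\liminf_n\HC_{f^*}(\eta_n|Q_n)-\EB[g]$, i.e.\ $\HC_{f^*}(\eta|Q)\le\liminf_n\HC_{f^*}(\eta_n|Q_n)$; adding the $\gamma$-inequality from the previous step yields $\HC_{f^*}(\eta|Q)+\gamma(Q)\le\ell$. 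For attainment, fix $\eta$ with $\HC_{f^*,\gamma}(\eta)<\infty$ (the case $=\infty$ being vacuous), take a minimizing sequence $Q_n\in\QC_\gamma$; the same a priori bound places $(Q_n)$, eventually, in the weakly compact set $\{\gamma\le c\}$, so it has a weak cluster point $Q^*\in\QC_\gamma$, and the joint weak lower semicontinuity just proved forces $\HC_{f^*}(\eta|Q^*)+\gamma(Q^*)=\HC_{f^*,\gamma}(\eta)$.

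The main obstacle is the a priori bound: without it one has no uniformly integrable minorant for the integrands $\tilde f^*(\cdot,\eta_n,\psi_{Q_n})$ and cannot run Fatou's lemma. It is exactly here that the strengthened hypothesis (\ref{eq:IntegrabilityD}) is used — the existence of $\xi_0$ with $f(\cdot,\xi_0)^+\in M^{\rho_\gamma}_u$, not merely in $M^{\rho_\gamma}$ — via the uniform integrability characterization (\ref{eq:UIOrliczIFF}); the remaining edge cases (a sequence leaving $\QC_\gamma$, or the second coordinate failing $\psi\ge 0$ or $\EB[\psi]=1$) are handled directly and are routine.
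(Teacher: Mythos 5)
Your proof is correct and follows essentially the same route as the paper's: the a priori bound $\gamma(Q_n)\leq c$ on sublevel sets obtained from Young's inequality with a fixed $\xi_0\in\DC_{f,\gamma}$, the uniform integrability of the negative parts of $\tilde f^*(\cdot,\eta_n,\psi_{Q_n})$ via (\ref{eq:UIOrliczIFF}), the reduction of weak to norm lower semicontinuity by convexity of the sublevel sets, Fatou's lemma, and attainment from weak compactness of $\{\gamma\leq c\}$. The only cosmetic differences are that you phrase the Fatou step through a uniformly integrable minorant rather than the uniformly integrable negative parts, and you invoke norm (not merely weak) convergence of $\eta_n$ for the uniform integrability of $\{\xi_0\eta_n\}_n$; both are equivalent to the paper's argument.
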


\begin{proof}
  Let $\xi_0$ be as in (\ref{eq:IntegrabilityD}) (so
  $f(\cdot,\xi_0)^+\in M^{\rho_\gamma}_u$), $\psi_Q=dQ/d\mathbb{P}$ for each
  $Q\in\mathcal{Q}_\gamma$, and put
  $K(c,a):=2\left(c+\rho_\gamma\left(2f(\cdot,
      \xi_0)^+\right)+a\|\xi_0\|_\infty\right)<\infty$. Then by
  (\ref{eq:ProofMinimax1}) in the proof of
  Lemma~\ref{lem:ProofRockafellarMinimax1}, 
  \begin{align}
    \label{eq:ProofAppLSC1}
    \|\eta\|_1\leq a\text{ and } \mathcal{H}_{f^*}(\nu|Q)+\gamma(Q)\leq
    c\,\Rightarrow\, \gamma(Q)\leq K(c,a).
  \end{align}
  Also, from (\ref{eq:ProofPropEntropy1}) with $\xi_0$ above and
  (\ref{eq:UIOrliczIFF}), we see that whenever $(\eta_n)_n$ is
  uniformly integrable and $\sup_n\gamma(Q_n)<\infty$, $\bigl\{\tilde
  f^*(\cdot, \eta_n,\psi_{Q_n})^-\bigr\}_n$ is uniformly integrable.

  To see that $\mathcal{H}_{f^*}+\gamma$ is weakly lsc, it suffices by
  convexity that for each $c\in\mathbb{R}$,
  $\Lambda_c:=\{(\eta,\psi_Q)\in L^1\times L^1:\,
  Q\in\mathcal{Q}_\gamma,\, \mathcal{H}_{f^*}(\eta|Q)+\gamma(Q)\leq
  c\}$
  is norm-closed.  Let $(\eta_n,\psi_{Q_n})_n\subset \Lambda_c$ be
  norm convergent to $(\eta,\psi_Q)$.  Passing to a subsequence, we
  can assume a.s. convergence too.  By the above paragraph and the
  norm convergence, $(\eta_n)_n$ and
  $\bigl\{\tilde f^*(\cdot, \eta_n,\psi_{Q_n})^-\bigr\}_n$ are
  uniformly integrable, while $\gamma(Q)\leq \sup_n\gamma(Q_n)<\infty$
  by (\ref{eq:ProofAppLSC1}) and lsc of $\gamma$. Thus by Fatou's
  lemma,
  \begin{align*}
    \mathcal{H}_{f^*}(\eta|Q)+\gamma(Q) &=\mathbb{E}\left[\tilde
                                          f^*(\eta,\psi_Q)\right]+\gamma(Q)\\
                                        & \leq \liminf_n \mathbb{E}\left[\tilde
                                          f^*(\eta_n,\psi_{Q_n})\right]+\liminf_n\gamma(Q_n)\\
                                        &\leq \liminf_n \left( \mathbb{E}\left[\tilde
                                          f^*(\eta_n,\psi_{Q_n})\right]+\gamma(Q_n)\right)\leq c,
  \end{align*}
  hence $(\eta,\psi_Q)\in \Lambda_c$, obtaining the lower
  semicontinuity of $\mathcal{H}_{f^*}+\gamma$. In particular,
  $\mathcal{H}_{f^*}(\eta|\cdot)+\gamma(\cdot)$ is weakly lower semicontinuous
  on $\mathcal{Q}_\gamma$, so another application of (\ref{eq:ProofAppLSC1})
  as well as (\ref{eq:PenaltyFuncCompact}) imply that the infimum
  $\inf_{Q\in\mathcal{Q}_\gamma}\left(\mathcal{H}_{f^*}(\eta|Q)+\gamma(Q)\right)$ is
  attained for each $\eta\in L^1$.
\end{proof}

\section{Some Details of Example~\ref{ex:CounterEx1}}
\label{sec:DetailCounterEx}

Let $(\mathbb{N},2^\mathbb{N},\mathbb{P})$ and $(P_n)_n$ be as in
Example~\ref{ex:CounterEx1} with
$\mathcal{P}=\overline{\mathrm{conv}}(P_n, n\in\mathbb{N})$ (closed
convex hull). The weak compactness of $\mathcal{P}$ follows from
$\sup_nP_n(\{k,k+1,k+2,...\}) =\sup\{1/n:\, n\geq k\}=1/k\rightarrow
0$ as $k\rightarrow\infty$. Also $\rho_\gamma(\xi)
=\sup_{P\in\mathcal{P}}\mathbb{E}_P[\xi] =\sup_n\mathbb{E}_{P_n}[\xi]$
if $\xi\geq 0$ since $P\mapsto \mathbb{E}_P[\xi\wedge N]$ is
continuous for any $N\in \mathbb{N}$, so
$\sup_{P\in\mathcal{P}}\mathbb{E}_P[\xi]
=\sup_N\sup_{P\in\mathcal{P}}\mathbb{E}_P[\xi\wedge N]
=\sup_N\sup_{P\in
  \mathrm{conv}(P_n;n\in\mathbb{N})}\mathbb{E}_P[\xi\wedge N]$, while
if $P=\alpha_1 P_{n_1} +\cdots \alpha_l P_{n_l}$, then
$\mathbb{E}_P[\xi\wedge N] =\alpha_1E_{P_{n_1}}[\xi \wedge
N]+\cdots+\alpha_l E_{P_{n_l}}[\xi\wedge N] \leq \max_{1\leq i\leq l}
E_{P_{n_i}}[\xi\wedge N]\leq \sup_nE_{P_n}[\xi\wedge N]$.

\begin{lemma}
  \label{lem:Ex4LimSup} 
  Let $f$ be given by (\ref{eq:Ex4F}) in
  Example~\ref{ex:CounterEx1}. Then we have
  \begin{equation}
    \label{eq:ClaimLimSup2}
    \lim_{N\rightarrow\infty} 
    \sup_n \mathbb{E}_{P_n}[f(\cdot,\xi)\ind_{\{f(\cdot,\xi)\geq N\}}] 
    =\limsup_n \xi(n)^+e^{\xi(n)^+}.
  \end{equation}
\end{lemma}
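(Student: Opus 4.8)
The plan is to reduce (\ref{eq:ClaimLimSup2}) to an elementary statement about bounded nonnegative sequences. Write $a_k:=\xi(k)^+e^{\xi(k)}\ge 0$, so that $f(k,\xi(k))=ka_k$, and observe first that $a_k=\xi(k)^+e^{\xi(k)^+}$ (both expressions vanish when $\xi(k)\le 0$ and equal $\xi(k)e^{\xi(k)}$ when $\xi(k)>0$); thus the right-hand side of (\ref{eq:ClaimLimSup2}) is exactly $L:=\limsup_k a_k$. Since $\xi\in L^\infty=\ell^\infty$, the sequence $(a_k)$ is bounded by $M:=\sup_{|x|\le\|\xi\|_\infty}(x^+e^x)<\infty$; if $M=0$ then $\xi=0$, $a_k\equiv 0$, and both sides of (\ref{eq:ClaimLimSup2}) are $0$, so we may assume $M>0$.

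First I would compute the $P_n$-expectations explicitly. For $n\ge 2$ one has $\EB_{P_n}[f(\cdot,\xi)\ind_{\{f(\cdot,\xi)\ge N\}}]=(1-\tfrac1n)\,a_1\ind_{\{a_1\ge N\}}+a_n\ind_{\{na_n\ge N\}}$, and for $n=1$ it equals $a_1\ind_{\{a_1\ge N\}}$. Hence, as soon as $N>M\ge a_1$, the indicator $\ind_{\{a_1\ge N\}}$ is $0$ and we get $\EB_{P_n}[f(\cdot,\xi)\ind_{\{f(\cdot,\xi)\ge N\}}]=a_n\ind_{\{na_n\ge N\}}$ for every $n\ge 1$, so that
\[
  \sup_n\EB_{P_n}\bigl[f(\cdot,\xi)\ind_{\{f(\cdot,\xi)\ge N\}}\bigr]=\sup_n a_n\ind_{\{na_n\ge N\}},\qquad N>M.
\]
It then remains to prove $\lim_{N\to\infty}\sup_n a_n\ind_{\{na_n\ge N\}}=L$.

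For the bound ``$\le L$'': from $0\le a_n\le M$, the event $\{na_n\ge N\}$ forces $n\ge N/M$, hence $\sup_n a_n\ind_{\{na_n\ge N\}}\le\sup_{n\ge N/M}a_n$, and the right-hand side decreases to $\limsup_n a_n=L$ as $N\to\infty$. For ``$\ge L$'' (non-trivial only if $L>0$): fix $0<\varepsilon<L$; the index set $\{n:a_n>L-\varepsilon\}$ is infinite, and along it $na_n>n(L-\varepsilon)\to\infty$, so for every $N$ one can pick such an $n$ with $na_n\ge N$, which yields $\sup_n a_n\ind_{\{na_n\ge N\}}>L-\varepsilon$ for all $N$; letting $\varepsilon\downarrow 0$ finishes this direction. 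Combining the two inequalities with the identifications above gives (\ref{eq:ClaimLimSup2}).

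The argument invokes no machinery beyond elementary analysis; the only points requiring a little attention are writing the $P_n$-expectations correctly, so that the $n=1$ contribution is killed once $N>M$ (which is exactly where boundedness of $\xi$ enters), and treating the degenerate cases $M=0$ and $L=0$ separately, in which every quantity in sight is zero.
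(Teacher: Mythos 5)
Your proof is correct and follows essentially the same route as the paper's: both compute $\EB_{P_n}[f(\cdot,\xi)\ind_{\{f(\cdot,\xi)\geq N\}}]$ explicitly as the sum of an atom-$1$ term and an atom-$n$ term, observe that the first disappears in the limit while the second reduces to $\sup_n h(\xi(n))\ind_{\{nh(\xi(n))\geq N\}}$ with $h(x)=x^+e^x$, and then prove the two inequalities by the bound $n\geq N/\|h(\xi)\|_\infty$ on the surviving indices and by choosing, for each $N$, an index $n$ with $h(\xi(n))>L-\varepsilon$ and $nh(\xi(n))\geq N$. The only cosmetic difference is that you kill the atom-$1$ contribution exactly once $N>M$ rather than bounding it and letting it vanish in the limit.
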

\begin{proof}
  Let $h(x):=x^+e^x$ and fix $\xi\in \ell^\infty$. If
  $\|\xi\|_\infty=0$, then both sides of (\ref{eq:ClaimLimSup2}) are
  $0$, thus we assume $\|\xi\|_\infty>0$ ($\Leftrightarrow$
  $h(\|\xi\|_\infty)>0$).  Note that for any $N, n\in\mathbb{N}$, we have (by
  definition)
  \begin{equation*}
    \label{eq:ProofNXEX2}
    \begin{split}
      \mathbb{E}_{P_n}[f(\cdot,\xi)\ind_{\{f(\cdot,\xi)\geq N\}}]
      &=\left(1-\frac1n\right)h(\xi(1))\ind_{\{h(\xi(1))\geq N\}}
      +h(\xi(n))\ind_{\{nh(\xi(n))\geq N\}}.
    \end{split}
  \end{equation*}
  In particular, $\mathbb{E}_{P_n}[f(\cdot,\xi)\ind_{\{f(\cdot,\xi)\geq N\}}]
  \leq h(\xi(1))\ind_{\{h(\xi(1))\geq N\}}+h(\xi(n))\ind_{\{n
    h(\|\xi\|_\infty)\geq N\}}$, thus
  \begin{align*}
    \lim_{N\rightarrow\infty}
    \sup_n\mathbb{E}_{P_n}[f(\cdot,\xi)\ind_{\{f(\cdot,\xi) \geq N\}}]
    &\leq \lim_{N\rightarrow \infty} \sup_{n\geq N/h(\|\xi\|_\infty)}
    h(\xi(n))=\limsup_nh(\xi(n))=:\alpha.
  \end{align*}
  This is ``$\leq$'' in (\ref{eq:ClaimLimSup2}). If $\alpha=0$, we are
  done; otherwise, for any $\varepsilon>0$ and $N\in \mathbb{N}$, there
  exists some $n_N^\varepsilon>N/(\alpha-\varepsilon)>0$ with
  $h(\xi(n_N^\varepsilon))>\alpha-\varepsilon$.  In particular,
  $n_N^\varepsilon h(\xi(n_N^\varepsilon))>N$, hence
  \begin{align*}
    \sup_n\mathbb{E}_{P_n}[f(\cdot,\xi)\ind_{\{f(\cdot,\xi)\geq N\}}]
    &\geq \sup_n h(\xi(n))\ind_{\{nh(\xi(n))\geq N\}}\geq
    h(\xi(n_N^\varepsilon))>\alpha-\varepsilon.
  \end{align*}
  This proves ``$\geq$'' in (\ref{eq:ClaimLimSup2}).
\end{proof}

Since $h(x)=x^+e^x$ is increasing, continuous and $h(0)=0$, $\limsup_n
h(\xi(n))=0$ $\Leftrightarrow$ \linebreak$\limsup_n \xi(n)=0$, hence
(\ref{eq:CounterExDfgamma}). Recall that
$\mathrm{dom}(\mathcal{I}_{f,\gamma}^*)\subset ba_+$ (since $\mathcal{I}_{f,\gamma}$ is
increasing).  Finally, 
\begin{lemma}
  \label{lem:CountExConj1}
  The conjugate $\mathcal{I}_{f,\gamma}^*$ is explicitly given on $ba^s_+$ as:
  \begin{align}
    \label{eq:NotDNotDom}
    \mathcal{I}_{f,\gamma}^*(\nu)&= \sup_{x\geq 0} x(\|\nu\|-e^x),\quad\forall
    \nu\in ba^s_+.
  \end{align}  
\end{lemma}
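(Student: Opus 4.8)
The plan is to establish the two inequalities in $(\ref{eq:NotDNotDom})$ separately, relying on the characterisation $(\ref{eq:baSmallLinftySingular})$ of $ba^s_+$, the facts that $\|\nu\|=\nu(\NB)$ for $\nu\in ba_+$ and that such a $\nu$ vanishes on every finite set when it belongs to $ba^s_+$, and the explicit expression $\IC_{f,\gamma}(\xi)=\sup_n\bigl((1-\tfrac1n)h(\xi(1))+h(\xi(n))\bigr)$ recorded in Example~\ref{ex:CounterEx1}, where $h(x):=x^+e^x$ is continuous, nondecreasing, and identically $0$ on $(-\infty,0]$.

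For the bound $\IC_{f,\gamma}^*(\nu)\le\sup_{x\ge0}x(\|\nu\|-e^x)$, I would fix $\xi\in\ell^\infty$ and set $\ell:=\limsup_n\xi(n)$. From $(\ref{eq:baSmallLinftySingular})$ one has $\nu(\xi)\le\|\nu\|\,\ell$, while dropping the nonnegative summand $(1-\tfrac1n)h(\xi(1))$ gives $\IC_{f,\gamma}(\xi)\ge\sup_n h(\xi(n))\ge\limsup_n h(\xi(n))$. Since $h$ is nondecreasing and continuous, $\limsup_n h(\xi(n))=h(\ell)$: for ``$\ge$'' evaluate $h$ along a subsequence realising $\ell$, for ``$\le$'' use that $\xi(n)\le\ell+\varepsilon$ eventually together with continuity of $h$ at $\ell$. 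If $\ell\le0$ then $h(\ell)=0$ and $\nu(\xi)\le0$, hence $\nu(\xi)-\IC_{f,\gamma}(\xi)\le0$, which is the value of $x(\|\nu\|-e^x)$ at $x=0$; if $\ell>0$ then $h(\ell)=\ell e^\ell$ and $\nu(\xi)-\IC_{f,\gamma}(\xi)\le\ell(\|\nu\|-e^\ell)\le\sup_{x\ge0}x(\|\nu\|-e^x)$. Taking the supremum over $\xi\in\ell^\infty$ then yields the inequality.

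For the reverse inequality, I would, for each fixed $x\ge0$, test $\IC_{f,\gamma}^*(\nu)$ against $\xi_x:=x\,\ind_{\{n\ge2\}}\in\ell^\infty$. Since $\nu\in ba^s_+$ vanishes on $\{1\}$ and $\nu\ge0$, one gets $\nu(\xi_x)=x\,\nu(\{n\ge2\})=x\,\nu(\NB)=x\|\nu\|$, while $\xi_x(1)=0$ forces $\IC_{f,\gamma}(\xi_x)=\sup_n h(\xi_x(n))=h(x)=xe^x$. Hence $\IC_{f,\gamma}^*(\nu)\ge\nu(\xi_x)-\IC_{f,\gamma}(\xi_x)=x(\|\nu\|-e^x)$, and taking the supremum over $x\ge0$ concludes. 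The only nonroutine point is the identity $\limsup_n h(\xi(n))=h(\limsup_n\xi(n))$ used in the upper bound — this is exactly where monotonicity and continuity of $h$ are both invoked; the case-split for $\ell\le0$ versus $\ell>0$ and the evaluation at the explicit test sequence are otherwise immediate.
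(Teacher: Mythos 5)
Your proof is correct and follows essentially the same route as the paper's: the upper bound comes from $\nu(\xi)\leq\|\nu\|\limsup_n\xi(n)$ together with the lower estimate $\IC_{f,\gamma}(\xi)\geq\sup_n h(\xi(n))$ obtained by discarding the nonnegative term $(1-\tfrac1n)h(\xi(1))$, and the lower bound uses exactly the paper's test sequence $(0,x,x,\dots)$. Your explicit justification of $\limsup_n h(\xi(n))=h(\limsup_n\xi(n))$ and the case split on the sign of $\limsup_n\xi(n)$ are a slightly more careful rendering of the step the paper carries out via $\|\xi^+\|_\infty$, but the argument is the same.
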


\begin{proof}
  Let $\nu\in ba_+^s$.  Since $\mathbb{E}_{P_n}[f(\cdot,\xi)]
  =\left(1-\frac{1}{n}\right)h(\xi(1))+h(\xi(n))$, we have
  \begin{align*}\label{eq:ProofNXEX3}
    h(\xi(n))\stackrel{\text{($*$)}}\leq
    \mathbb{E}_{P_n}[f(\cdot,\xi)]\stackrel{\text{($**$)}}\leq
    h(\xi(1))+h(\xi(n)).
  \end{align*}
  From ($*$) and (\ref{eq:baSmallLinftySingular}),
  $\nu(\xi)-\mathcal{I}_f(\xi) \leq \|\nu\|\limsup_n \xi(n)-\sup_nh(\xi(n))
  \leq \|\xi^+\|_\infty (\|\nu\|-e^{\|\xi^+\|_\infty})\leq \sup_{x\geq
    0}x(\|\nu\|-e^x)$ which shows ``$\leq$'' in (\ref{eq:NotDNotDom}).
  Considering $\bar x^0:=(0,x,x,\cdots)\in \ell^\infty$ with $x\geq 0$
  (then $\|\bar x^0\|_\infty=x$ and $\nu(\bar x^0)=x\|\nu\|$ since
  $\nu$ vanishes on finite sets), we deduce from ($**$) that
  \begin{align*}
    \mathcal{I}_{f,\gamma}^*(\nu) &
                                    \geq \sup_{\xi\in
                                    \ell^\infty} (\nu(\xi)-h(\xi(1))- h(\|\xi\|_\infty)) \geq\sup_{x
                                    \geq0}(x\|\nu\|-xe^x).
  \end{align*}
\end{proof}

\section*{Acknowledgements}
\addcontentsline{toc}{section}{Acknowledgements}
The author gratefully acknowledges the financial support of the Center
for Advanced Research in Finance (CARF) at the Graduate School of
Economics of the University of Tokyo.

\small

\end{document}